\Crefname{equation}{}{}
\newcommand{\ModuliSpace}{T}
\newcommand{\ModuliParameter}{t}
\newcommand{\LastCoord}{W}
\newcommand{\C}{\mathbb{C}}
\newcommand{\Q}{\mathbb{Q}}
\newcommand{\A}{\mathbb{A}}
\renewcommand{\P}{\mathbb{P}}
\newcommand{\orderThreeAuto}{g}
\def\A{{{\mathbb A}}}
\def\Z{{{\mathbb Z}}}
\def\P{{{\mathbb P}}}
\def\G{{{\mathbb G}}}
\def\O{{{\mathcal O}}}
\newcommand{\Jac}{\operatorname{Jac}}
\newcommand{\pullbackcorner}[1][dr]{\save*!/#1-1.7pc/#1:(-1.5,1.5)@^{|-}\restore}
\numberwithin{equation}{section}
\newtheorem{theorem}[equation]{Theorem}
\newtheorem{lemma}[equation]{Lemma}
\newtheorem{problem}[equation]{Problem}
\newtheorem{proposition}[equation]{Proposition}
\newtheorem{question}[equation]{Question}
\theoremstyle{definition}
\newtheorem{definition}[equation]{Definition}
\newtheorem{setup}[equation]{Setup}
\theoremstyle{remark}
\newtheorem{remark}[equation]{Remark}
\title{Examples of effectivity for integral points on certain curves of genus 2}
 \author{Pietro Corvaja}
 \address{Dipartimento di Scienze Matematiche, Informatiche e Fisiche, Universit\`a degli Studi di Udine, via delle Scienze 206, 33100 Udine, Italy}
 \email{pietro.corvaja@uniud.it}
 \author{Davide Lombardo}
 \address{Dipartimento di Matematica, Universit\`a di Pisa, Largo Bruno Pontecorvo 5, 56127 Pisa, Italy}
 \email{davide.lombardo@unipi.it}
 \author{Umberto Zannier}
 \address{Scuola Normale Superiore, Piazza dei Cavalieri 7, 56126 Pisa, Italy}
 \email{umberto.zannier@sns.it}
\date{}
\begin{document}

\begin{abstract}
We consider families of smooth projective curves of genus 2 with a single point removed and study their integral points. We show that in many such families there is a dense set of fibres for which the integral points can be effectively determined. Our method is based on the construction of degree-3 étale covers of such curves of genus 2 and the study of the torsion values of sections of certain doubly elliptic abelian schemes.
\end{abstract}

\maketitle

\noindent\textbf{Keywords.} Integral points, diophantine equations, Bilu's methods, effectivity, algebraic curves, abelian schemes, torsion values, Betti map.

\medskip

\noindent\textbf{2020 Mathematics Subject Classification.} 14G05, 11G30, 14H25, 11D25.

\section{Introduction} 

After the negative solution of Hilbert's tenth problem, obtained by Matijasevic based on the work of Davis, Putnam and Robinson, it was gradually realised that problems of effectivity in Diophantine equations, even in special cases (for instance concerning curves), could be very hard.

In the case of integral points on affine curves, Siegel's theorem leads to an effective algorithm {\it only} for deciding whether such a set is infinite or not: the theorem predicts finiteness unless the curve has genus 0 and at most two points at infinity. In these cases, an effective procedure has long been known that gives a complete parametrisation of the solutions in terms of polynomial or exponential functions. 
However, if we establish that such a set is finite, there is to date no known general algorithm to compute the finite set of solutions, or even to decide whether it is non-empty.\footnote{We mean that there are no known algorithms that can be proved to terminate, whereas there are algorithms which conjecturally terminate. See for example \cite{alpöge2024conditionalalgorithmicmordell} for a recent conjectural algorithm to determine the set of rational points on a smooth projective curve of genus at least 2.}

Such algorithms do exist for curves of genus $0$ or $1$, or for curves of certain other special shapes. Most of the methods rely on effective estimates for linear forms in logarithms, first developed by A.~Baker (later E.~Bombieri proposed alternative treatments, with methods of different nature).

One of the simplest instances for which the mentioned decidability problems are still open occurs for the quartic curves of genus $2$ of equation
\begin{equation}\label{E.quartic}
y^4+ay^2+xy+x^3+bx^2=0
\end{equation}
where $a,b$ are arbitrary algebraic integers:  the problem of finding (the) solutions in a given ring of $S$-integers is -- to our knowledge -- not completely solved.

\begin{remark}\label{rmk: singular points}
    Every curve of genus $2$ is birational to a curve of the form $y^2=f(x)$, where $f(x)$ is a sixth-degree polynomial, and if this were the given affine model then all the integral solutions $(x,y)$ could be computed by the methods mentioned above. In this hyperelliptic model, however, the underlying affine curve has two points at infinity, which moreover are interchanged by the canonical involution. In contrast, the quartic model \eqref{E.quartic} has a single point at infinity, which, for general values of $a,b$, is not a special point. (Note that the quartic curve is singular at the origin, which does not affect the problem of integral points; namely, apart from a finite number of easily determined exceptions corresponding to the singular point, the integral points of a smooth model correspond bijectively to the integral points of the present model.)
\end{remark}

To our knowledge, there are no algorithms in the literature for solving such equations over arbitrary number fields, even for special values of the parameters (unless $a=b=0$).

\medskip

\subsection{Background and goals}

The present article concerns a method to obtain effectivity for the search of integral points on certain (sets of) curves of genus $2$, with a single point at infinity; in particular, this will include infinitely many cases in the family \eqref{E.quartic}.  We shall develop certain geometrical constructions aimed at applying a criterion for effectivity due to Y.~Bilu, which will be recalled soon.

\medskip

{\bf Some terminology and background}. 
For a ring $\O_S=\O_{K,S}$ of $S$-integers in a number field $K$ and a curve $X$ over $K$ embedded in some affine space, we shall denote by $X(\O_S)$ the set of points in $X$ with coordinates in $\O_S$. 
This definition depends on the embedding, but since we will vary $K$ and $S$ freely, the embedding is actually irrelevant for our purposes.
We say that {\it $X(\O_S)$ is effective} if it is finite and computable\footnote{In each case when this set is infinite, this may be established and a finite parametrisation can be found.}.

For any of the curves $X$ covered in our treatment, $X(\O_S)$ will be effective for any ring $\O_S$ as above and any embedding of the curve.

\begin{remark}\label{rmk: descent does not matter}
    Let $X$ be an affine curve defined over $\overline{\Q}$. There will be many number fields $K$ for which there exists a model $X_K$ of $X$ over $\mathcal{O}_{K, S}$. Moreover, even if we fix $K$, there will be multiple models of $X$ defined over $\mathcal{O}_{K,S}$. In our setting, we will have effectivity for all models of our curves, defined over any ring of the form $\mathcal{O}_{K, S}$. Note that, if we are given two models $X_1 / \mathcal{O}_{K_1, S_1}$ and $X_2 / \mathcal{O}_{K_2, S_2}$, there exists a number field $K_3$ containing $K_1, K_2$ and a finite set of places $S_3$ of $\mathcal{O}_{K_3}$ (containing all those that divide a place in $S_1$ or $S_2$) such that $X_1$ and $X_2$ become isomorphic to each other over $\mathcal{O}_{K_3, S_3}$. Since the $\mathcal{O}_{K_1, S_1}$-points of $X_1$ embed in its $\mathcal{O}_{K_3, S_3}$-points, and similarly for $(K_2, S_2)$, effectivity for $(K_3, S_3)$ gives effectivity for $(K_1, S_1)$ and $(K_2, S_2)$. For this reason, in the context of our problem, we can fix any (number) field of definition $K$ for $X$ and any model. Of course, when we say that $X(\mathcal{O}_{K,S})$ is effective for all number fields $K$, we implicitly consider only those number fields $K$ that contain at least \textit{some} field of definition for $X$.
    
    The upshot of this discussion is that the property ``$X(\mathcal{O}_{K,S})$ is effective for all $K$ and $S$" (in the sense above) is really a property of the $\overline{\Q}$-isomorphism class of $X$. We will use this fact repeatedly and without further mention when we talk about moduli spaces of curves over $\overline{\Q}$. See also \Cref{sect: models of genus 2 curves} for a more extended discussion.
\end{remark}

\smallskip

Before discussing in any detail our approach, let us pause to recall a few other known effective methods for curves. 

\smallskip

{\it Reduction steps}. First, note that if we have a non-constant morphism $\varphi:Y\to X$ defined over $\O_S$ between two affine irreducible curves, and if $X(\O_S)$ is effective, then $Y(\O_S)$ is effective as well; this reduction step, though obvious, is often useful.  Another, more subtle, reduction occurs via the Chevalley-Weil theorem concerning unramified covers $Y\to X$ of affine curves. In this case, we go in the opposite direction, by lifting integral points to $Y$, so we need effective knowledge for $Y$ rather than $X$.  In this way, the implication is a little weaker: if $Y(\O_S)$ is effective for all $K,S$, then the same holds for $X$.

\medskip

Using these principles, the known cases of effectivity are often reduced to the case of $\P_1\setminus\{0,1,\infty\}$, which may be written explicitly as the equation $x+y=1$, to be solved in $S$-units, that is, with $x,y\in\O_S^*$.

This equation was first solved effectively by Baker (and then by Bombieri using a different method); in fact, these methods in particular lead to the effective solution of the inequality
\begin{equation}\label{eq: Baker inequality}
    |\alpha-g|_\nu <H(g)^{-\kappa}
\end{equation}
for any fixed non-zero algebraic number $\alpha$, to be solved for $g\in\Gamma$, where $\Gamma\subset \overline\Q^*$ is a finitely generated multiplicative group, $\nu$ is a valuation of the number field  $\Q(\alpha,\Gamma)$ and $\kappa>0$. \footnote{In fact, the bounds for the height of the solutions are completely explicit in terms of basic functions.} This inequality is fundamental to derive a criterion of Bilu, which will be recalled shortly, and which, as we have said, is the basis of our method.

\begin{remark}
The $S$-unit equation has been effectively solved by completely different remarkable methods. 
On the one hand, M.~Kim introduced in 2005 \cite{MR2181717} a sophisticated evolution of the $p$-adic method of Skolem-Chabauty (which is still widely studied); on the other hand, a few years later, K.~Murty and H.~Pasten \cite{MR3084298}, and independently R.~von K\"anel \cite{MR3296485}, used a modular approach to re-obtain effective estimates, at least over $\Q$. von K\"anel and collaborators have also extended this approach to obtain other effectiveness results, see \cite{MR4587796, MR4264210, VONKANEL2024}. A modular approach to effectiveness has also been put forward by L.~Alp\"oge \cite{alpöge2021modularityeffectivemordelli}.

We note that these methods, powerful as they are, do not seem to imply an inequality of the strength of \eqref{eq: Baker inequality}, which is however needed for Bilu's criterion.
Other results on effectivity, which often boil down to $S$-unit equations, are due to Grant \cite{MR1184116}, Levin \cite{MR2477511, MR3782466}, and Levesque-Waldschmidt \cite{MR3284122}.
\end{remark}

\medskip

{\it Bilu's criterion}. 
This is based on the following result of independent interest: {\it For every irreducible curve $Z\subset\G_m^2$, not a translate of a subtorus, and for every $K,S$, the set $Z(\O_S)$ is effective.}

In other words, if $f(x,y)$ is an irreducible polynomial not defining a translate of a subtorus, which amounts to $f(x,y)$ having at least three monomials, then the equation $f(x,y)=0$ can be effectively solved in $S$-units $x,y\in\O_S^*$. 
A proof \cite{Bi} can be derived by suitably applying inequality \eqref{eq: Baker inequality} to a Puiseux expansion of the coordinate functions on $Z$. We also refer to the book by Bombieri-Gubler \cite{BG} for an even more direct argument. Note that the $S$-unit equation corresponds to taking $Z$ to be defined in $\G_m^2$ by the equation $x+y=1$. Combining this result with the above reduction steps we obtain:

\smallskip

{\bf Bilu's criterion}: {\it Let $X$ be an affine curve over $\bar{\Q}$ such that there exist an \'etale cover $Y\to X$ and a non-constant morphism $Y\to \G_m^2$ whose image is not a translate of a subtorus. Then $X(\O_S)$ is effective for every $K,S$.}
\medskip

As announced above, we will apply this criterion to certain affine curves $X/\overline\Q$ of genus $2$ with smooth projective model $\tilde X$, where $X=\tilde X-\{q_0\}$ and $q_0$ is some (non-special, i.e. not fixed by the canonical involution) point on $\tilde X$.

The results we obtain by no means give effectivity for all such curves $X$. However, they apply to a set of curves (and points $q_0$) that is, for example, dense in the moduli space of such data for the complex-analytic topology. See Theorems \ref{T} and \ref{thm: two-parameters family intro} below for a flavour of the statements we can obtain, and the comments on the method at the end of \S\ref{SS.hints} and in \S\ref{sect: comments}. Our main purpose is to show that Bilu's criterion is sometimes successful, despite work of Landesman and Poonen \cite{LP} indicating strong limitations.

To apply the criterion, we shall construct suitable \'etale covers $Y\to X$ and morphisms $Y\to \G_{\rm m}^2$, with images of increasing degree; this will only work for a subset of the parameters defining the curves. We note that, as this degree increases, we might say that the examples become `more interesting' since they cannot be derived by substitution from a universal continuous family. 
 On the negative side, the relevant curves will have increasing fields of definition.
 \medskip

 On the other hand, to our knowledge, no result of this kind has appeared in the literature before. In fact, the known cases of effectivity for curves of genus $2$, not of the special hyperelliptic form, seem to be sporadic; for instance, Serre's book \cite{Se} explicitly inquires about such possible effectivity  (see the remarks at the end of p.~116). The shape of the curves that we treat is somewhat significant (an example is represented by the family of equations \eqref{E.quartic}), being in a sense the simplest for which effectivity is not known in general.
Indeed, these curves can be realised as affine quartic curves of genus 2 with one point at infinity (see Lemma \ref{lemma: singular model with one point at infinity}), while effectivity is known for all affine curves of genus $\le 1$, in particular for all cubics.

\medskip

Furthermore, we do not know the exact limits of applicability of Bilu's criterion. In this direction, recent work by Landesman-Poonen \cite{LP} does indeed suggest strong limitations. However, we note that the results in \cite{LP} refer to curves over $\C$. In principle, by some kind of specialisation argument, one is led to expect that similar results should hold over $\overline\Q$; however, some caution is needed in trusting such a conclusion, as shown for instance by notable examples such as Belyi's theorem. We also mention the conjecture of Bogomolov-Tschinkel \cite[Conjecture 1.1]{MR2159376} that \textit{every} (connected) curve over $\overline{\Q}$ is dominated by an étale cover of $y^6=x^2+1$.

\medskip 
 
{\bf Our results}. Here and in the sequel we take $\overline\Q$ as our ground field.
Before stating our results, we recall some classical facts about curves of genus $2$. Any smooth projective (connected) curve $\tilde X$ of genus $2$ can be obtained by adding two points at infinity to an affine curve defined by an equation $y^2=f(x)$, where $f$ is a polynomial of degree $6$ without multiple roots. This exhibits $\tilde X$ as a double cover of $\P_1$ ramified at $6$ points, called {\it special points} or {\it Weierstrass points}. 
The datum of these points (or, more precisely, of their images in $\mathbb{P}_1$) up to the action of $\operatorname{PGL}_2$ determines $\tilde X$ up to isomorphism.\footnote{Sending one of these points to infinity leads to a birational model for $\tilde X$ of the same shape as above, but with $\deg f=5$.} There are fairly complicated {\it Igusa invariants} for the resulting quotient space. 

There is a natural (unique) involution on $\tilde X$ with quotient $\P_1$, defined by $(x,y)\mapsto (x,-y)$ and called the hyperelliptic involution, which fixes the six special points. 
We shall be interested in effectivity of integral points on subsets of the forms  $\tilde{X}-\{q_0\}$. 
 If $q_0$ is a special point then $X$ has a model in $\A^2$ given by an equation $y^2=f^*(u)$ where $f^*$ has degree $5$: in this case, the integral points can be found effectively by well-known methods\footnote{The criterion of Bilu that we use can be read as including these cases, but with a construction of a cover that goes back to Siegel and is different from the one we will introduce.}. The same effectivity holds for an affine curve defined in $\A^2$ by $y^2=f(x)$ with $\deg f=6$ (actually, any polynomial $f$): however, in this last affine model we remove {\it two} points from $\tilde X$, and these two points are moreover exchanged by the hyperelliptic involution.
Instead, our examples will all involve a single non-special point $q_0$. 

\medskip

Smooth projective algebraic curves of genus $2$ are parametrised by a quasi-projective variety, usually denoted $M_2$ (recall that we work over $\overline{\mathbb{Q}}$). This is a rational variety of dimension $3$. In the sequel, the terminology `moduli space for curves of genus $2$' will refer to some quasi-projective variety $M$ with a birational map to $M_2$. Thus, generically, a point of $M$ will represent a curve of genus $2$, and every curve of genus $2$, with the exception of a family at most of dimension $2$, will correspond to a point of $M$. We will also call $M_{2,1}$ the quasi-projective variety parametrising pairs $(X,p)$, where $X$ is a curve of genus $2$ and $p$ is a point in $X$; it has dimension $4$.

Our results concern families of equations (or, more abstractly, families of curves). While the statement we give below is slightly more general, the reader may find it useful to think about the following situation. Consider a family $\tilde{X}_\ModuliParameter$ of (possibly singular) projective curves of geometric\footnote{One way to think about this condition is that the unique smooth projective curve birational to $\tilde{X}_\ModuliParameter$ has genus 2.} genus 2, given by equations whose coefficients depend algebraically on finitely many parameters $\ModuliParameter=(\ModuliParameter_1, \ldots, \ModuliParameter_n)$. Suppose furthermore that -- in the given projective embedding -- these curves have a single point at infinity (depending on $t$), that we denote by $q_t$. For example, the projective closure of the family \eqref{E.quartic} has these properties, with $\ModuliParameter=(a,b)$ and $q_t=[1:0:0]$. We may then ask about the integral points of the affine curve $\tilde{X}_t \setminus \{q_\ModuliParameter\}$. This situation can be formalised as follows: the family $\tilde{X}$ is a subvariety of $T \times \mathbb{P}_m$, where $\ModuliSpace$ (the space of parameters $t$) is some affine variety. The fibres of the natural projection $\tilde{X} \to \ModuliSpace$ are projective curves of genus 2 in $\mathbb{P}_m$, and the marked point $q_\ModuliParameter$ amounts to an algebraic section $q: \ModuliSpace \to \tilde{X}$ of this projection. Finally, note that for every $t \in T$ we can consider the smooth projective curve birational to $\tilde{X}_t$: this corresponds to a point in the moduli space $M_2$, so that we obtain a map $\ModuliSpace \to M_2$.

An example of our results is the following theorem:

\begin{theorem}\label{T}
Consider a variety $\ModuliSpace$ equipped with an algebraic family $\tilde{X} \to \ModuliSpace$ of projective curves of geometric genus 2 and a generically non-special section $q : \ModuliSpace \to \tilde{X}$. Suppose that the induced moduli map $\ModuliSpace \to M_2$ (sending each point $\ModuliParameter \in \ModuliSpace$ to the point in $M_2$ corresponding to the smooth curve birational to $\tilde{X}_\ModuliParameter$) is dominant and of finite degree. Suppose furthermore that $q_{\ModuliParameter}$ is generically a smooth point of $\tilde{X}_{\ModuliParameter}$.
There is a complex-analytically dense subset $\Sigma\subset \ModuliSpace$ of algebraic points on $\ModuliSpace$ such that for each $\ModuliParameter\in\Sigma$ the integral points on $X_\ModuliParameter=\tilde{X}_\ModuliParameter-\{q_\ModuliParameter\}$ are effectively computable, over every number field.
\end{theorem}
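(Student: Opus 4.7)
The plan is to verify Bilu's criterion for every $t$ in a complex-analytically dense subset $\Sigma\subset T$ by constructing, for each such $t$, an étale cover $Y_t\to X_t$ together with a morphism $Y_t\to\G_m^2$ whose image is not a torus coset. After an innocuous finite étale base change of $T$ (justified by \Cref{rmk: descent does not matter}), I pick a nontrivial section of the $3$-torsion of $\Jac(\tilde X)\to T$; this yields a family $\pi:\tilde Y\to\tilde X$ of connected cyclic étale covers of degree $3$ with Galois group $\langle\sigma\rangle\cong\Z/3$, and with $\tilde Y_t$ a smooth projective curve of genus $4$ by Riemann-Hurwitz. A further innocuous base change splits the fibre of $\pi$ above $q$ into three smooth sections $s_1,s_2,s_3:T\to\tilde Y$, cyclically permuted by $\sigma$.

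Next I turn to the Prym scheme $P\to T$ of $\pi$, an abelian scheme of relative dimension $2$ carrying a natural action of $\Z[\zeta_3]$ via $\sigma$ (which satisfies $1+\sigma+\sigma^2=0$ on $P$); this is the ``doubly elliptic'' structure alluded to in the abstract. The class $\alpha:=[s_2-s_1]$ lies in $P$, since its pushforward to $\Jac(\tilde X)$ is $[q-q]=0$. Because $\sigma$ cyclically permutes the $s_i$, the sibling class $\beta:=[s_3-s_1]$ equals $(1+\sigma)\alpha=-\sigma^2\alpha$, so $\alpha$ and $\beta$ span a cyclic $\Z[\zeta_3]$-submodule of $P$ and, crucially, $\alpha(t)$ is torsion if and only if $\beta(t)$ is.

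I would then apply a density theorem for torsion values of sections of abelian schemes (of Masser-Zannier / Corvaja-Masser-Zannier type) to obtain a dense subset $\Sigma\subset T(\overline\Q)$ at which $\alpha(t)$ is torsion in $P_t$. Concretely, one studies the real-analytic Betti map $b:T(\C)\to\R^4/\Z^4$ attached to $\alpha$ with respect to a local trivialisation of $P$ by the period lattice: torsion values of $\alpha$ correspond to rational values of $b$, which are dense in $T(\C)$ as soon as $b$ attains maximal real rank $3=\dim T$ somewhere. The doubly elliptic $\Z[\zeta_3]$-structure of $P$ lets one reinterpret this rank-maximality condition in terms of sections of elliptic quotients, where one exploits the non-isotriviality ensured by the hypothesis that $T\to M_2$ is dominant of finite degree, together with the generic non-specialness of $q$ which forces $\alpha$ to be non-constant and not identically torsion.

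For each $t\in\Sigma$, picking $N$ with $N\alpha(t)=0$ yields a rational function $u_t$ on $\tilde Y_t$ with $\operatorname{div}(u_t)=N(s_2(t)-s_1(t))$; set $v_t:=\sigma^*u_t$, so that $\operatorname{div}(v_t)=N(s_1(t)-s_3(t))$. Both are units on the affine curve $Y_t:=\tilde Y_t\setminus\{s_1(t),s_2(t),s_3(t)\}$, and any toric coset relation $u_t^av_t^b\equiv c$ would force $a(s_2-s_1)+b(s_1-s_3)=0$ as an honest divisor on $\tilde Y_t$, which is impossible for $(a,b)\ne(0,0)$ since $s_1,s_2,s_3$ are three distinct points. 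Hence $(u_t,v_t):Y_t\to\G_m^2$ has non-toric image, and the étale cover $Y_t\to X_t$ (obtained by restricting $\pi$) together with this morphism satisfies Bilu's criterion, yielding effectivity of $X_t(\O_S)$ over every $(K,S)$. The main obstacle is the density step, namely showing that the Betti map of $\alpha$ attains maximal real rank $3$: this passes through the doubly elliptic decomposition of $P$ and an analysis, in the style of Masser-Zannier, of the Kodaira-Spencer derivative of $\alpha$ along the base, using dominance of $T\to M_2$ to exclude the degenerate loci.
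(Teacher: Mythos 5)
Your overall architecture coincides with the paper's: a $3$-torsion section gives a cyclic cubic \'etale cover $\tilde Y\to\tilde X$, the differences of the three points above $q$ land in the Prym, one shows these differences are torsion on a dense set via the Betti map, and Bilu's criterion (in the form of \Cref{thm: Bilu version 3}) finishes. The final step of your argument (multiplicative independence of $u_t,v_t$ from the divisor computation, hence non-toric image in $\G_m^2$) is correct and is exactly what underlies \Cref{thm: Bilu version 3}. However, there are two genuine gaps in the middle. First, you assert that the Prym $P$ is ``doubly elliptic'' because it carries a $\Z[\zeta_3]$-action with $1+\sigma+\sigma^2=0$. That action alone does not force $P$ to be isogenous to the square of an elliptic curve: there exist simple abelian surfaces whose endomorphism algebra is an imaginary quadratic field, so an extra input is needed. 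The paper obtains the splitting by lifting the hyperelliptic involution of $X$ to an involution $\iota_Y$ of $Y$ (\Cref{lemma: lift hyperelliptic involution}), which together with $\sigma$ generates an $S_3$-action; the quotient $E=Y/\langle\iota_Y\rangle$ is an explicit genus-$1$ curve, and the map $(\phi,\phi\circ\sigma):\Jac(Y)\to E^2$ restricts to an isogeny on the Prym (\Cref{lemma: projection to E^2}). This splitting is not a convenience but the mechanism that makes the density step accessible, so its absence is a real hole.

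Second, the density step itself is both misstated and essentially deferred. For a section of an abelian scheme of relative dimension $g=2$ the Betti map takes values in $\R^4/\Z^4$, and the sufficient condition for a dense set of torsion values is generic real rank $2g=4$, not rank $3=\dim T$ as you write: if the rank were only $3$, the image would be a measure-zero subset of the $4$-torus and the preimage of the rational points need not be dense. Establishing rank $4$ is the hard part; the paper does not do it for a general abelian surface scheme but reduces, via the isogeny $P\sim E^2$ and the non-isotriviality of $E$, to the case of squares of non-isotrivial elliptic schemes treated in the cited work of Corvaja--Masser--Zannier. Finally, a smaller point: your claim that generic non-specialness of $q$ ``forces $\alpha$ to be not identically torsion'' is unsupported (the paper proves this separately, with some effort, in \Cref{prop: no identically torsion section on the whole moduli space}); but this is harmless for \Cref{T}, since if $\alpha$ were identically torsion the conclusion would hold for every $\ModuliParameter$, which is the dichotomy the paper actually uses in its proof.
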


\begin{remark}\label{rmk: remark to main theorem}
\phantom{}
\begin{enumerate}
\item Since we are working with the coarse moduli space $M_2$ over $\overline{\Q}$, the curve $\tilde{X}_t$ is a priori only defined over $\overline{\Q}$. As explained in \Cref{rmk: descent does not matter}, this does not affect the conclusion, since we can fix an arbitrary descent to a number field.
\item The set $\Sigma$ is itself `effective', in the sense that we may compute some element of $\Sigma$ in any prescribed open disk in $\ModuliSpace$, and we may establish whether any given algebraic point of $\ModuliSpace$ belongs to $\Sigma$. The set $\Sigma$ is described by countably many explicit algebraic equations. The nature of these equations will be clearer from the arguments and the examples below. The points of $\Sigma$ correspond to points of $\ModuliSpace$ where a certain section of an abelian scheme on (a finite cover $\ModuliSpace'$ of) $\ModuliSpace$ takes values which are torsion in the corresponding fibre. 
\item We can see $\ModuliSpace$ as a finite-degree cover of a (locally closed) subvariety of $M_{2,1}$ whose projection to $M_2$ is of finite degree and has open dense image.
\item We note explicitly that the set $\Sigma$ is not $p$-adically dense, for any $p$. Indeed, as already indicated, $\Sigma$ is the locus where a certain section of an abelian scheme becomes torsion, and it is known \cite{MR4120270} that the torsion specialisations of a non-torsion section are not $p$-adically dense.
\item For every point $t\in\Sigma$ we have an example of effectivity for $X_t(\O_S)$ explained by Bilu's criterion, so there is nothing new in any of these individual examples. Our objective here is to exhibit many such examples where the applicability of Bilu's criterion becomes increasingly more difficult to predict, which also shows its potential. We note that our construction is, in a sense, the simplest possible. It probably allows for variations that could produce further examples, perhaps with small fields of definition.
\end{enumerate}
\end{remark}

\begin{question}\label{question: non-split Jacobian}
        It would be interesting to show that the set of curves for which the methods of the present paper give effectivity is not contained in the set of curves with split Jacobian (we fully expect this to be the case, but it is not clear to us what is the simplest way to show this statement). Note, however, that even if $\tilde{X}$ admits a map $\pi : \tilde{X} \to E$ to an elliptic curve (i.e., the Jacobian of $\tilde{X}$ is split), it is not a given that one can determine the integral points of $\tilde{X} \setminus \{x\}$, because they map to integral points of $E \setminus \pi(x)$ only if $\pi^{-1}(\pi(x))=x$, that is, if $\pi$ is totally ramified at $x$. By the Hurwitz genus formula, this can only happen for $\deg \pi\leq 3$. This shows that the curves $\tilde{X}$ for which there is effectivity for the integral points due to the existence of a map to an elliptic curve are contained in a proper subvariety (the locus of curves with $(2,2)$ or $(3,3)$-split Jacobian). In contrast, our method gives effectivity for a set of curves that is dense in the moduli space.
\end{question}

\Cref{T} has the assumption that the moduli map $\ModuliSpace \to M_2$ is dominant. More generally, we will be able to show that the conclusion of \Cref{T} holds for many $2$-parameter families of curves. We formalise this as \Cref{thm: dichotomy for families}, which, combined with a short calculation, gives for example:
\begin{theorem}\label{thm: two-parameters family intro}
    Let $q_{a,b}$ be the unique point at infinity in the projective completion $\tilde{X}_{a,b}$ of 
    \[
    X_{a,b} : y^4+ay^2+xy+x^3+bx^2=0.
    \]
    There is a complex-analytically dense set of algebraic points $(a,b) \in \overline{\Q}^2$ such that the integral points on $X_{a,b} \colonequals \tilde{X}_{a,b} \setminus \{q_{a,b}\}$ can be determined effectively, over every number field.
\end{theorem}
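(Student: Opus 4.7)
The plan is to deduce this from \Cref{thm: dichotomy for families} applied to the natural family over $T \subset \mathbb{A}^2$ (a suitable Zariski-open subset) parametrised by $(a,b)$, with the section $q : T \to \tilde X$ sending $(a,b)$ to $[1:0:0]$. The ``short calculation'' alluded to in the introduction then amounts to verifying the hypotheses of that theorem for this concrete family.

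First I would check the geometric and local data. For generic $(a,b)$, the projective closure $\tilde X_{a,b}$ of $y^4+ay^2+xy+x^3+bx^2=0$ has geometric genus $2$, the unique point at infinity $q_{a,b} = [1:0:0]$ is a smooth point, and on the smooth projective model it is not a Weierstrass point. The smoothness at infinity is a one-line dehomogenisation: setting $X=1$ yields a local equation of the form $Z = -Y^4 + (\text{higher order})$ near $(Y,Z) = (0,0)$, so $Y$ is a local parameter at $q_{a,b}$. The remaining conditions (genus $2$, non-Weierstrass) are Zariski open on $T$, so it suffices to exhibit a single pair $(a_0,b_0)$ satisfying them, which can be done by a direct computation on a specific numerical example. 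The singularity at the origin does not affect integral points, as recalled in \Cref{rmk: singular points}.

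Second, I would verify the moduli-theoretic hypothesis: the induced map
\[
\Phi : T \longrightarrow M_{2,1}, \quad (a,b) \longmapsto [(\tilde X_{a,b}, q_{a,b})]
\]
has image of dimension $2$ and is generically finite onto its image. A convenient way to check this is to show that neither $a$ nor $b$ is locally constant along the fibres of $\Phi$, which reduces to producing, for each variable, two curves in the family differing only in that variable that are not isomorphic as pointed curves. An Igusa-invariant check at an explicit pair of points suffices; alternatively, one may observe that the hyperelliptic involution together with the marked point $q_{a,b}$ produce enough intrinsic invariants to recover $a$ and $b$ up to finite ambiguity from the isomorphism class.

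Finally, one feeds the resulting data into \Cref{thm: dichotomy for families}, obtaining the required complex-analytically dense set of parameters at which a certain section of a doubly elliptic abelian scheme over a finite cover of $T$ specialises to a torsion point; at these parameters Bilu's criterion applies to $X_{a,b}$ and delivers the effectivity. The main obstacle I anticipate is ruling out the \emph{exceptional} alternative in the dichotomy, namely the possibility that the relevant section is identically torsion on $T$ for the family \eqref{E.quartic}. This is a Betti-map non-degeneracy question, and is expected to follow from a direct, though somewhat involved, calculation on the explicit étale degree-$3$ cover of $X_{a,b}$ produced by the construction of the paper; concretely, one has to exhibit a single $(a,b)$ at which the section is non-torsion, which by openness then propagates to a dense subset of $T$.
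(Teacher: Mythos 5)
Your overall strategy is the paper's: realise $X_{a,b}$ as a family over an open $T\subset\mathbb{A}^2$, check the moduli map has $2$-dimensional image, and invoke \Cref{thm: dichotomy for families}. The preliminary verifications (smoothness at infinity, genus, non-speciality, $2$-dimensionality of the moduli image) are fine, modulo the small point that the hypothesis of \Cref{thm: dichotomy for families} concerns the image in $M_2$, not in $M_{2,1}$, so generic finiteness of $(a,b)\mapsto[(\tilde X_{a,b},q_{a,b})]$ is not by itself the right check.

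The genuine gap is in your last paragraph: you have misread the exceptional alternative of the dichotomy. Case (2) of \Cref{thm: dichotomy for families} is \emph{not} that the section $\sigma$ is identically torsion on $T$; it is that for every $j_0$ the section is constant on each connected component of the level set $\{\ModuliParameter : j(E_\ModuliParameter)=j_0\}$. Exhibiting a single $(a,b)$ where $\sigma$ is non-torsion rules out only the former, not the latter: a section can be non-torsion everywhere and still be constant along every $j$-level curve (the paper's remark following \Cref{thm: dichotomy for families} notes that such examples are easy to construct, and in that situation there is no guarantee of a dense set of torsion values). Also, ``non-torsion at one point propagates by openness to density of torsion parameters'' is not a valid inference. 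The paper's actual verification is considerably heavier: after passing to the degree-$160$ cover $\ModuliSpace''$ on which the $3$-torsion point and its cube-root trivialisation are defined, it fixes $j_0=1$, bounds the number of geometrically irreducible components of $\{\ModuliParameter\in\ModuliSpace'':j(E_\ModuliParameter)=1\}$ by the degree of its projective closure (which turns out to be $630$), and then exhibits, by reduction modulo a prime, more than $630$ distinct values of $\sigma$ along that locus — forcing $\sigma$ to be non-constant on some component and thereby excluding case (2). Without an argument of this kind (or an answer to \Cref{question: method works on every surface}), your proof does not close.
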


To conclude this introduction, we recall some related work. As already mentioned, our method is based on Bilu's criterion (see \Cref{thm: Bilu version 1} below), which may be expressed in terms of (an étale cover of) the pointed curve $\tilde{X} \setminus \{x\}$ having two independent morphisms to $\mathbb{G}_m$. Example 1.6 of \cite{LP} (see also the discussion before Proposition 6.1 of \cite{Bi}), due to Ikehara and Tamagawa, shows that for every curve $\tilde{X}$ of genus $2$ over $\overline{\Q}$ there exist infinitely many points $x \in \tilde{X}$ such that $\tilde{X} \setminus \{x\}$ has a finite étale cover with a non-constant morphism to $\mathbb{G}_m$. In a different direction, work of Grant \cite{MR1184116} relates the integral points on curves of genus 2 to the integral points on certain affine subsets of their Jacobian, and in turn, reduces the latter problem to the study of what he calls non-abelian $S$-unit equations. Yet other approaches to effectivity rely on versions of Runge's theorem (see for example \cite{MR2477511, MR3782466} for sophisticated modern versions of this approach) or on the relation to the Shafarevich conjecture \cite{MR3010636}.

\begin{remark}
Some of the results in this paper have been obtained with the help of the computer algebra system MAGMA. All the code to verify our computational claims can be found online, see \cite{Computations}.
\end{remark}

\subsection{Some explicit examples}

 A curve of genus $2$ with a marked point $q_0$ may be represented as a quartic in $\A^2$, with a single smooth point at infinity (corresponding to $q_0$) and precisely one singular point, see Lemma \ref{lemma: singular model with one point at infinity}. The shape of these equations recasts the problem we consider in classical diophantine terms.

Let us see some examples of such quartics. Consider the family of curves of genus $2$ given by the (singular) equations
\begin{equation}\label{eq: two-parameter example}
X_{a,b} : y^4+ay^2+xy+x^3+bx^2=0,
\end{equation}
with parameters $a,b$. Generically, the curve $X_{a, b}$ is singular (only) at the origin and has a single smooth point at infinity, in a closure in $\P_2$. The genus of $X_{a, b}$ is $2$ by the formula $g=(d-1)(d-2)/2-s$, where $d$ is the degree of the equation and $s$ is the number of singular (nodal) points.
This is the family we will study in the proof of \Cref{thm: two-parameters family intro}.
For the moment, we limit ourselves to pointing out a very special family of singular plane quartics on which the integral points can be found effectively, and we mention a related problem in diophantine effectivity.

\begin{remark}[Hyperelliptic model] Intersecting the quartic \eqref{eq: two-parameter example} with the pencil of lines through the singular point (the origin), one finds the hyperelliptic genus $2$ involution, deriving from switching the other two intersections of the quartic with such a line:
\begin{equation*}
(x,y)\mapsto \left( 2{x^4\over y^4}-x, 2{x^3\over y^3}-y \right).
\end{equation*}
In this way we obtain the usual sextic model
\begin{equation}\label{eq: sextic model}
\mu^2=a\lambda^6+\lambda^5+b\lambda^4-1,\qquad \lambda={y\over x},\quad \mu=\lambda^{-4}-x.
\end{equation}
Of course, this is to be interpreted as defining the function field of the relevant curves. The affine model given by equation \eqref{eq: sextic model} has two points at infinity.
\end{remark}

\subsubsection{An effective case: trinomial equations} In a certain expository paper by Masser-Zannier, seeking maximum simplicity, the example $y^4-axy-x^3=0$ was proposed. However, the authors did not notice that the existence of the automorphism of order $5$ given by ($x\to\theta x, y\to \theta^2y$), where $\theta$ is a fifth root of unity, leads to the affine model $y^5=w(w+a)^2$, where $w=x^2/y$. The integral points on this curve can of course be determined by the usual methods. 
(Bogdan Grechuk pointed out this fact to Masser and Zannier, with a different and much longer series of substitutions.) 
This peculiar coincidence does not occur in the examples below, nor for the general equation \eqref{eq: two-parameter example}, as it is not difficult to check.
\medskip

Trinomial equations are actually \textit{always} effectively solvable. Over the usual ring of integers $\Z$, this was observed and proved by B.~Grechuk, T.~Grechuk, and A.~Wilcox \cite{GGW}. Their proof is based on Runge's theorem and on Baker's results on superelliptic equations, and requires a rather long argument with comparison of factorisations and distinctions in cases. In particular, due to the use of Runge's theorem, their proof, as it stands, does not work over general number fields, but only over $\Z$ (the rest of their argument also does not take into account the possible existence of non-trivial units); however, it would be possible to modify their proof for the general case.  

In essence, the argument of \cite{GGW} exploits the Newton polygon to express $x,y$ multiplicatively in terms of two other functions. A shorter, different, and completely general proof is as follows. We emphasise again that the case $\mathcal{O}_{K, S}=\mathbb{Z}$ of this statement was originally proved by Grechuk, Grechuk, and Wilcox in \cite{GGW}.
\begin{theorem}[Generalisation of \cite{GGW} with a different argument]
    Let $K$ be a number field and $S$ be a finite set of places of $K$. Consider the equation 
    \begin{equation}\label{eq: general trinomial}
            x^n+ax^ry^s+by^m=0,
    \end{equation}
    where the exponents $m, n, r, s$ are non-negative integers and the coefficients $a,b$ are non-zero elements of $K$. One can effectively decide whether \eqref{eq: general trinomial} has infinitely many $S$-integral solutions; when the set of solutions is finite, it can be effectively determined.
\end{theorem}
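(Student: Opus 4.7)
The plan is to derive the theorem as a direct application of Bilu's criterion, together with classical effective methods for one-variable and binomial equations, after splitting on the shape of the Newton polygon of $F(x,y) \colonequals x^n + ax^r y^s + by^m$. I would first peel off the axis solutions: since $F(0,y) = by^m$ and $b \neq 0$, any solution with $x = 0$ forces $y = 0$, and $F(x,0)$ is a one-variable polynomial with finitely many effectively computable roots. So it suffices to determine $X(\mathcal{O}_{K,S})$ for the open subcurve $X \colonequals C \cap \G_m^2$, where $C \colonequals \{F = 0\} \subset \A^2$.

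Next I would split on whether the three exponent vectors $(n,0)$, $(r,s)$, $(0,m)$ are collinear, i.e.\ whether $rm + sn = nm$. In the non-collinear case the Newton polygon of $F$ is a non-degenerate triangle, and I claim that no factor of $F$ in $\bar{K}[x,y]$ is a binomial. Indeed, by multiplicativity of Newton polygons under products, a binomial factor would express this triangle as the Minkowski sum of a segment and another lattice polytope; comparing edge-lengths and directions of a triangle (whose three edges lie in pairwise non-parallel directions) shows that this forces the segment to degenerate to a point. Consequently every irreducible component of $X$ is cut out by a polynomial with at least three monomials, and in particular is not a translate of a subtorus of $\G_m^2$. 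Bilu's criterion then applies to each component, with the trivial étale cover and the inclusion $X \hookrightarrow \G_m^2$ (which is a genuine morphism because $x, y$ do not vanish on $X$), giving the effective determination of $X(\mathcal{O}_{K,S})$, and hence of $C(\mathcal{O}_{K,S})$.

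In the collinear case, write $d = \gcd(n, m)$, $n = dn'$, $m = dm'$, and $(r, s) = (n'(d-j), m'j)$ for some integer $j \in \{0, \ldots, d\}$; the substitution $u = x^{n'}/y^{m'}$ yields $F(x, y) = y^m\, G(u)$ for the univariate polynomial $G(u) = u^d + a u^{d-j} + b$. Thus $F = 0$ with $y \neq 0$ amounts to $G(u) = 0$, and for each root $u_0 \in K$ of $G$ the problem reduces to the binomial equation $x^{n'} = u_0\, y^{m'}$, which is effectively solved (or parametrised when infinite) in $\mathcal{O}_{K,S}$ by classical arguments; roots of $G$ not lying in $K$ yield no solutions with $y \neq 0$. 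The truly degenerate cases (some exponent vanishing, or $(r,s) \in \{(n,0), (0,m)\}$) collapse the trinomial to a binomial or to a superelliptic equation $y^m = f(x)$, and are handled directly or by Baker's theorem.

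The main obstacle I foresee is the Minkowski-indecomposability claim used in the non-collinear case: one must verify carefully that a non-degenerate lattice triangle cannot be written as a non-trivial Minkowski sum containing a segment as a summand, so that the Newton-polygon argument rules out binomial factors of $F$ and Bilu's criterion applies on every component of $X$ unconditionally. Once this is in place, the theorem follows by packaging Bilu's criterion with the classical effective treatment of binomial and superelliptic equations, and in particular avoids both the Runge-type restrictions and the long case analysis of \cite{GGW}.
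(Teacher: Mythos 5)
Your collinear case is fine and agrees in substance with the paper's treatment of the degenerate case $\Delta = |nm-rm-sn| = 0$ (where the curve becomes a union of translates of subtori and one reduces to binomial equations). The non-collinear case, however, has a fatal gap: you conflate the $S$-integral points of the affine curve $C=\{F=0\}\subset\A^2$ having $xy\neq 0$ with the $\O_{K,S}$-points of $C\cap\G_m^2$ viewed as an affine subvariety of $\G_m^2$. The latter are exactly the solutions with $x,y\in\O_S^{*}$, i.e.\ the $S$-unit solutions, and the theorem you invoke (effectivity for irreducible curves in $\G_m^2$ not equal to a translate of a subtorus) controls only those. For Bilu's criterion to yield the integral points of $C$ itself you would need two multiplicatively independent \emph{units of the coordinate ring} of $C$ (or of an \'etale cover), i.e.\ functions whose zeros and poles all lie at infinity; $x$ and $y$ are not such functions, since they vanish at points of $C$ (at the origin and at the roots of $F(x,0)$, $F(0,y)$). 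The conclusion you would reach — finiteness of the off-axis integral solutions whenever the three exponent vectors are non-collinear — is simply false: for $x^3+xy+y^2=0$ (so $n=3$, $r=s=1$, $m=2$, a non-degenerate Newton triangle, irreducible) the pairs $(x,y)=(-t-t^2,\,-t^2-t^3)$ are integral solutions for every $t\in\Z$, because the curve is a rational nodal cubic with a single point at infinity. Its $S$-unit solutions are indeed finite and effective, consistent with the $\G_m^2$ theorem, but they are a proper subset of the integral ones. The fact that the statement explicitly allows for infinitely many solutions should have been a warning that a pure finiteness criterion cannot be the whole argument.

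For comparison, the paper's proof never asks the auxiliary functions to be units. It sets $u=x^n/y^m$ and $v=x^r/y^{m-s}$, so that $u+av+b=0$, and identifies $K(u,v)$ as the fixed field of a finite abelian group $G$ of scalings $(x,y)\mapsto(\theta x,\eta y)$ acting on the function field; the curve is thereby realised as an abelian cover (a fibre product of cyclic covers) of a genus-0 curve, with the points at infinity forming $G$-orbits, and effectivity for integral points on such covers of the line is classical (superelliptic equations via Baker; Bilu and Dvornicich--Zannier for general Galois covers of the line). That machinery works directly for integral points and also detects the infinite cases, which the paper characterises via the structure of $G$. Incidentally, the step you flagged as the main risk is not the problem: a non-degenerate lattice triangle is Minkowski-indecomposable, so $F$ indeed has no binomial factors — but that correct observation does not feed into a valid proof along the route you propose.
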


\begin{proof}
The equation $x^n+ax^ry^s+by^m=0$ defines an affine curve $Z$. Define functions $u,v$ on $Z$ by $u:=x^n/y^m$ and $v:=x^r/y^{m-s}$; note that $u+av+b=0$. Set $\Delta=|nm-rm-sn|$; this is the absolute value of the determinant of the matrix $\begin{pmatrix}
    n & r \\ m & m-s
\end{pmatrix}$.   If $\Delta=0$ then $u,v$ are multiplicatively dependent, and the equation $u+av+b=0$ shows that the curve $Z$ is a union of translates of tori in $\G_m^2$, so the problem is standard. If $\Delta\neq 0$, the curve $Z$ is irreducible, as can be seen from the substitution $x\mapsto t^m x, y\mapsto t^ny$, which transforms the equation into a binomial in $t$.\footnote{See Schinzel's book \cite{S} for a complete and subtle theory of reducibility of trinomials over function fields, not needed in the simple case considered here.}

There is an action of an abelian group $G$ on the function field $\overline{K}(Z) = \overline{\Q}(Z)$ of $Z$, defined as follows. Let
\[
G = \left\{ (\theta, \eta) \in \overline{\Q}^\times : \theta^\Delta=\eta^\Delta=1, \; \theta^n=\theta^r\eta^s=\eta^m \right\}.
\]
The group $G$ acts on $\overline{K}(Z)$ by $(x,y)\mapsto (\theta x,\eta y)$. 
Writing $\theta=\zeta^p,\eta=\zeta^q$ for a fixed primitive $\Delta$-th root of unity $\zeta$, the equations $\theta^n=\theta^r\eta^s=\eta^m$ correspond to $pn-qm\equiv pr-q(m-s)\equiv 0\pmod\Delta$. Hence, the group $G$ corresponds to integer vectors $(p,q)$ satisfying these congruences, which form a subgroup of $\Z^2$ of index $\Delta$, modulo $\Delta \Z^2$. The action of $G$ leaves the functions $u,v$ invariant. In fact, the field of invariants is generated by $u$ and $v$, because the lattice generated by $(n,m)$ and $(r,m-s)$ has index $\Delta$ in $\Z^2$.  Since $u,v$ are linearly related,  the field of invariants corresponds to a curve of genus $0$. Also, the group action induces an action on the points of a smooth complete model $Z'$ of $Z$, which leaves invariant the set of poles of $x$. Hence, the affine curve for which we seek the integral points may be seen as a fibre product of two cyclic covers of the affine line (possibly with some points removed), and thus the known methods allow one to determine the $S$-integral points, if their set is finite, or to parametrise them. It turns out that there may be infinitely many integral points only if $G$ is cyclic, which happens if and only if $\gcd(m,n,r,s)=1$.
\end{proof}

\begin{remark}
The case of general Galois covers of the affine line is less direct than the special case appearing here. It can still be treated effectively, as in the independent results of Bilu \cite{Bi} and Dvornicich-Zannier \cite{DZ}. In fact, both of these results appeared in earlier publications that are very difficult to find today. Another independent argument was given by Poulakis. Serre interpreted all of these arguments in a short proof, in a letter to  Bertrand on 4 June 1993.
  See also \cite{Z2} for a complete and self-contained description of the general case of genus $0$.
\end{remark}

\begin{remark} Similar argument can also be applied in other cases, for instance when $u,v$ define a conic, or a curve of genus $\le 1$, for which the points at infinity are the $G$-orbits of the poles of $x$. 
\end{remark}

\subsubsection{Concrete examples}\label{subsubsec: concrete example}
Consider the curve $X$ with planar model
\begin{equation}\label{eq: first explicit example}
324x^4 - 324x^3 - 18x^2y + 71x^2 - 10xy - 6y^3 - 12y^2 = 0.
\end{equation}

This plane quartic has a unique singular point at $(0, 0)$ and a unique point $q$ at infinity (see also Lemma \ref{lemma: singular model with one point at infinity} for a discussion of the general situation). Therefore, it represents a curve of genus 2 with a marked point (at infinity). We will explain in \S\ref{subsec: concrete example, details} how our method gives effectivity for $X(\O_S)$.

\begin{remark}
    The way we build such examples, which we will discuss in greater detail in \S\ref{sect: examples}, starts by considering a family of genus-2 curves $X_\alpha$ over a base $\ModuliSpace$. This also gives the family of  Jacobians $J_{\alpha}$ of $X_\alpha$ (seen as an abelian scheme over $\ModuliSpace$), and we shall construct a suitable section $\sigma_{\alpha}$ of $J_\alpha$ for $\alpha\in\ModuliSpace$. The desired examples are found by considering those values of $\alpha$ for which $\sigma_\alpha$ is a torsion point of $J_\alpha$. 
    By classical results of Silverman-Tate, and more recent ones by Dimitrov-Gao-Habegger and Yuan-Zhang, the ``generic'' suitable values of $\alpha$ have bounded height, so their degree and torsion orders must tend to infinity outside certain proper closed subvarieties. We also recall that results by Mazur and Merel  \cite{Mazur, Merel} restrict the torsion order in terms of the degree of the field of definition.
     These facts explain why it is difficult to find examples having a small field of definition.
\end{remark}

For the curve $X$ of \eqref{eq: first explicit example}, the two components of the relevant section of the doubly-elliptic abelian scheme are both torsion of order 3. That they have the same order is explained by the fact that they are conjugate under the action of $\operatorname{Gal}(\overline{\Q}/\Q)$. A different situation arises with the curve given by (see \Cref{subsect: different torsion orders})
\[
81x^4 - 162x^3 + 9x^2y + 107x^2 - 62xy - 9y^3 + 36y^2=0.
\]
Here, the two component of the torsion section have order $2$ and $3$, and we again have effectivity for the integral points, over every number field.

\subsection{Perfect-power values of algebraic functions}

We point out a connection between the problem of effective integral points on curves of genus $2$ and another natural problem in diophantine theory.

\begin{problem}\label{prob: perfect power values}
Let $\pi: Y\to \A^1$ be a morphism from an affine curve $Y$ over a number field $K$, and suppose we are given a rational function $f\in K(Y)$. The problem is to find effectively the integral (or rational) points $m$ on $\A^1$  such that they lift to an algebraic point $\xi=\xi_m\in Y(\overline{K})$  with the property that $f(\xi)$ is a perfect $p$-th power in $K(\xi)$. The integer $p\ge 2$ may be fixed or variable, and the function $f$ is supposed not to be a perfect power in $K(Y)$.
\end{problem}

\begin{remark}
Let $Y=\A^1$ and $\pi$ be the identity. Now $f$ is simply a function in $K(x)$ and we have $\xi=m\in K$,  so we are looking for the points in $K$ where the value of an ordinary given rational function is a $p$-th power. Restricting to the integers of $K$, such values can be found effectively (using Baker's method, as applied by Schinzel-Tijdeman \cite{ST}), even if we let $p$ vary.
\end{remark}

To study Problem \ref{prob: perfect power values} in general, we can take the norm of $f$ down to $K(x)=K(\mathbb{A}^1)$ and obtain a rational function $F=F(x)$ on the affine line. It is then often possible to apply the techniques of Schinzel-Tijdeman to $F(x)$, see for instance the paper by B\'{e}rczes, Evertse and Gy\H{o}ry \cite{BerGyo}. However, a problem arises if $F$ is identically a perfect power -- for instance, a constant -- which can happen even if $f$ is not itself a perfect power.

For any fixed $p$, the problem of the finiteness of the integral points can be solved using Siegel's theorem, as follows.
Let $L$ be a normal closure of $K(Y)(f^{1/p})/K(x)$; we can consider $L$ as the function field of a smooth projective curve $Z$. Suppose that $f$ is not identically a $p$-th power of a rational function on $Y$. Up to enlarging $K$, we can further suppose that $K$ contains a primitive $p$-th root of unity. By definition, 
a quotient of $Z$ by a suitable decomposition group has an integral point lying over $\xi_m$, so we can decide effectively whether there are infinitely many integral points by applying Siegel's theorem to suitable quotients of $Z$.
It seems that for $p>2$ we always have finiteness. On the other hand, for $p=2$ there are certainly cases with infinitely many integral points, which can be found through Pell-type equations.

Motivated by the positive results for powers, especially those by Schinzel-Tijdeman, we ask:   

\begin{question}\label{question:perfect powers}
Is Problem \ref{prob: perfect power values} effectively solvable? Can we find these integral points when their set is finite?    
\end{question}

\subsubsection{Cubic roots} There is a connection between the case $p=3$ of Problem \ref{prob: perfect power values} and Question \ref{question:perfect powers} and the determination of integral points on curves of genus $2$. For instance, consider \eqref{eq: two-parameter example} as a cubic in $x$ and solve using the Cardano-Tartaglia formulae. Taking for instance $b=0$ in \eqref{eq: two-parameter example}, we have the equation 
\begin{equation*}
x^3+xy+y^4+ay^2=0.
\end{equation*}
The cubic formula expresses $x$ linearly in terms of the cube roots
\begin{equation*}
\root 3\of {-y^4-ay^2\pm y\sqrt{(y^3+ay)^2+(4/27)y}}.
\end{equation*}

Corresponding to an integral point $x=r,y=s$ on the original curve, the expression under the cube root gives a value in the quadratic field $\Q(\sqrt{(s^3+as)^2+(4/27)s})$.
The difficulty is that we cannot assume that the square root $\sqrt{(s^3+as)^2+(4/27)s}$ is rational; actually, this holds only for finitely many integral values of $s$.
The problem is to determine when the cubic root lies in this quadratic field. 

 In the above notation, $y$ is the coordinate on $\A^1$ and $Y$ is the curve $z^2=(y^3+ay)^2+(4/27)y$. The function $f$ on $Y$ is $-y^4-ay^2 + yz$.
Note that the norm of $f$ down to $K(y)$ is $-\frac{4}{27}y^3$, a perfect cube (at least if $4^{1/3} \in K$), so we fall into the exceptional cases mentioned above, for which the known theory does not provide an effective answer. These formulae provide an explicit link between this problem and the problem of effectivity for integral points on curves of genus $2$.

\medskip

\noindent\textbf{Acknowledgements.}
    We are grateful to Aaron Landesman and Bjorn Poonen for their interest in this work and useful comments on earlier versions.

\section{Curves of genus 2 and their degree-3 cyclic covers}\label{sec: details of the construction}
In this section we review the structure of 3-torsion points on the Jacobian of a genus-2 curve $X$ and explicitly describe the étale covers of $X$ that arise from the choice of such a $3$-torsion point.
Let $K$ be a field of characteristic $0$. (Most of what we do can be adapted to fields of characteristic greater than 5).
Let $X$ be a smooth projective curve of genus 2, explicitly represented by the hyperelliptic model
\begin{equation}\label{eq: affine patch}
y^2 = f(x) = a_6x^6 + a_5x^5 + \cdots +a_1 x + a_0,
\end{equation}
where $f(x) \in K[x]$ is a separable polynomial of degree $5$ or $6$.
Equation \eqref{eq: affine patch} only gives a model for an affine patch of $X$: we now describe the projective completion, which will be useful in what follows.
Writing $\tilde{f}(u) \colonequals u^6f(1/u)$ for the reciprocal polynomial of $f$, the open subscheme $\{u \neq 0\}$ of the affine curve
\begin{equation}\label{eq: second affine patch}
v^2 = \tilde{f}(u) 
\end{equation}
can be glued to the open subscheme $\{x \neq 0\}$ of the affine curve \eqref{eq: affine patch} along the isomorphism $u=1/x, v=y/x^3$. The points $u=0, v=\pm \sqrt{\tilde{f}(0)} = \pm \sqrt{a_6}$ on \eqref{eq: second affine patch} give rise to points $\infty_\pm$ on $X$, which we call the \textit{points at infinity} of $X$ (when $a_6=0$ there is a unique such point at infinity; in this case, we set $\infty_+=\infty_-$ to be this point). 
The projective curve with the same function field as the affine curve \eqref{eq: affine patch} is obtained by adding the points at infinity to the affine patch described by \eqref{eq: affine patch}.

We now discuss 3-torsion points on the Jacobian $J_X$ of $X$. A straightforward application of the Riemann-Roch theorem shows that any degree-0 divisor on  $X$ is linearly equivalent to one of the form $(P_1)+(P_2) - (\infty_+)-(\infty_-)$.
The following lemma describes which of these divisors represent $3$-torsion points. For a proof, see for example \cite[Lemma 3]{MR3263947}.
\begin{lemma}\label{lemma: 3-torsion points}
The non-trivial 3-torsion points of $J_X$ correspond bijectively to ways of representing $f(x)$ as $f(x) = P(x)^2-\lambda H(x)^3$, where $P(x)$ is a polynomial of degree at most 3, $\lambda$ is a non-zero element of $K$, and $H(x)$ is a monic polynomial of degree at most $2$. Given a pair $(P(x), H(x))$, a degree-0 divisor representing the corresponding $3$-torsion point can be taken to be $\frac{1}{3} \operatorname{div}(y+P(x))$.
\end{lemma}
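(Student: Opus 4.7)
The plan is to realise both directions of the bijection through the function $g = y + P(x)$, exploiting the fact that the hyperelliptic involution $\iota$ acts as $-1$ on $J_X$.

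For the forward direction, I would start from a non-trivial $3$-torsion class $T$ and, by Riemann-Roch on a genus-$2$ curve, represent it by a divisor of the form $D = (P_1) + (P_2) - \infty_+ - \infty_-$. Write $3D = \operatorname{div}(g)$. Since the poles of $g$ are supported on $\{\infty_+, \infty_-\}$ with total multiplicity at most $6$, we have $g \in L(3\infty_+ + 3\infty_-)$, a $5$-dimensional space with basis $\{1, x, x^2, x^3, y\}$. Write $g = \alpha y + P(x)$. If $\alpha = 0$ then $g \in K(x)$, so $\operatorname{div}_X(g) = \pi^*(\operatorname{div}_{\mathbb{P}^1}(g))$, and since all multiplicities in $3D$ are divisible by $3$, the same holds for the multiplicities of $\operatorname{div}_{\mathbb{P}^1}(g)$ (the ramification index of $\pi$ is at most $2$, coprime to $3$); hence $g$ is a constant multiple of some $h(x)^3$ and $D = \operatorname{div}(h)$ is principal, a contradiction. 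Rescaling to $\alpha = 1$, the key computation is
\[
g \cdot \iota^* g = (y+P)(-y+P) = P(x)^2 - f(x).
\]
Using $\iota^* D = -D$ in $J_X$, the divisor of the right-hand side equals $3D + 3 \iota^* D = 3 \pi^*(\pi(P_1) + \pi(P_2) - 2\infty)$, which is the divisor of $H(x)^3$ for the monic polynomial $H$ of degree at most $2$ whose finite roots are $\pi(P_1)$ and $\pi(P_2)$. Thus $P^2 - f = cH^3$ for some nonzero constant $c$, and setting $\lambda = -c$ gives $f = P^2 - \lambda H^3$; the nonvanishing of $\lambda$ follows from separability of $f$.

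For the converse, given such a decomposition, I would take $g = y + P$ and analyse $\operatorname{div}(g)$. A common root $\alpha$ of $P$ and $H$ would give $f(\alpha) = f'(\alpha) = 0$ (using $f' = 2PP' - 3\lambda H^2 H'$), contradicting separability, so $P$ and $H$ are coprime. The zeros of $g$ are then the non-Weierstrass points $(\alpha, -P(\alpha))$ with $H(\alpha) = 0$; at each of them, $\iota^* g$ takes the nonzero value $2P(\alpha)$, and the identity $g \cdot \iota^* g = \lambda H^3$ forces the order of $g$ to equal $3 m_\alpha$, where $m_\alpha$ is the multiplicity of $\alpha$ as a root of $H$. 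The divisibility of the pole orders of $g$ at $\infty_\pm$ by $3$ is the delicate point: generically $\deg H = 2$ and both pole orders equal $3$, since they sum to $3 \deg H = 6$ and each is at most $3$. In the degenerate cases $\deg H \in \{0,1\}$, one uses the Taylor expansion of $y$ in the local parameter $u = 1/x$ at $\infty_\pm$, together with the coefficient constraints on $P$ imposed by $f = P^2 - \lambda H^3$ (the top coefficients of $P^2$ must match those of $f$), to check that these constraints force exactly the right lower-order terms of $y + P$ to vanish at one of the two places at infinity. This yields $\operatorname{div}(g) = 3D$ for a degree-$0$ divisor $D$, and $[D]$ is non-trivial because $y + P$ is not a cube in $K(X)$.

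Bijectivity is then essentially formal: if two triples $(P,H,\lambda)$ and $(P',H',\lambda')$ yield the same $3$-torsion class, then $y+P$ and $y+P'$ have the same divisor, hence differ by a nonzero scalar; comparing coefficients of $y$ forces $P = P'$, and then $\lambda H^3 = P^2 - f = \lambda'(H')^3$ together with monicity of $H, H'$ gives $H = H'$ and $\lambda = \lambda'$. The main technical obstacle throughout is the case analysis at the places at infinity in the converse direction when $\deg H < 2$: in those degenerate cases the leading coefficient $c$ of $P$ must satisfy $c^2 = a_6$, causing the $u^{-3}$ term of $y + P$ to vanish at one of $\infty_\pm$, and one has to verify by explicit local computation that the remaining $u^{-2}, u^{-1}$, and possibly $u^0$ coefficients cancel precisely, producing pole/zero orders divisible by $3$.
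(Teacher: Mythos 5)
The paper does not prove this lemma at all: it cites \cite[Lemma 3]{MR3263947} (Bruin--Flynn--Testa), so your argument has to be judged on its own. It is a sound reconstruction of the standard proof, and the two main ideas -- identifying $g$ inside the $5$-dimensional space $L(3\infty_++3\infty_-)$ with basis $1,x,x^2,x^3,y$, and exploiting $g\cdot\iota^*g=P^2-f$ together with $\iota$ acting as $-1$ on $J_X$ -- are exactly right. Two points deserve attention. First, in the bijectivity step you assert that if two triples give the same $3$-torsion class then $y+P$ and $y+P'$ have the \emph{same divisor}; a priori they only have linearly equivalent divisors $3D\sim 3D'$. The missing ingredient is that a non-trivial class has a \emph{unique} effective representative $E$ with $D=E-\infty_+-\infty_-$: by Riemann--Roch, an effective divisor $E$ of degree $2$ on a genus-$2$ curve satisfies $\ell(E)=1$ unless $E$ is in the hyperelliptic class, and the hyperelliptic class is excluded here because $\pi^*((\pi Q_1)-(\infty))$ is principal and the class is non-trivial. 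Once uniqueness of $E$ is in place, $\operatorname{div}(y+P)=3(E-\infty_+-\infty_-)$ is determined by the class and your scalar-comparison argument goes through. Second, the analysis at infinity when $\deg H<2$, which you defer to an explicit Taylor computation, follows more cleanly from the identity you already use: $\operatorname{ord}_{\infty_+}(g)+\operatorname{ord}_{\infty_-}(g)=\operatorname{ord}_{\infty_\pm}(\lambda H^3)=-3\deg H$, and since the leading coefficients of $y$ at $\infty_+$ and $\infty_-$ differ in sign, cancellation with $P$ can occur at most at one of the two places; at the other the order is exactly $-3$, so the remaining order is $3(1-\deg H)$, automatically divisible by $3$. (Minor further remarks: your $\lambda=-c$ should be $\lambda=c$ since $P^2-f=cH^3$ gives $f=P^2-cH^3$ directly; the non-triviality of $[D]$ in the converse is best phrased as $y+P=ch^3$ with $h\in L(\infty_++\infty_-)=\langle 1,x\rangle$ forcing $y\in K[x]$; and your pole-order bookkeeping assumes $\deg f=6$, whereas the paper's setup also allows $\deg f=5$, where the single Weierstrass point at infinity requires a separate, similar computation.)
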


\begin{remark}
    When $\deg H(x) = 2$, we can be more concrete: writing $H(x) = x^2+c_1x + c_0$ and $P(x) = b_3x^3+b_2x^2+b_1x+b_0$, a degree-0 divisor representing the corresponding 3-torsion point can be taken to be
\[
(x_1, y_1) + (x_2, y_2) -(\infty_+)-(\infty_-),
\]
where $x_1, x_2$ are the roots of $H(x)$ and $y_i = -P(x_i)$.
\end{remark}

\begin{remark}
    Note that \cite[Proposition 4.1]{MR3925491} claims a similar bijection using only monic polynomials $H(x)$ of exact degree 2. However, this does not seem to be correct: for example, for the curve $X : y^2=x^6+1$, there are only $74$ solutions (over $\overline{\Q})$ to the equation $x^6+1 = (b_3x^3+b_2x^2+b_1x+b_0)^2-\lambda(x^2+c_1x+c_0)^3$, which is incompatible with the fact that the Jacobian of $X$ has $80$ non-trivial $3$-torsion points. In fact, there are six more decompositions $P(x)^2-\lambda H(x)^3=x^6+1$ with $H(x)$ monic of degree $\leq 1$, four with $\deg H(x)=1$ and two with $\deg H(x)=0$. The same problem was also pointed out and corrected in the very recent preprint \cite[Appendix A]{spencer2024wildconductorexponentstrigonal}.
\end{remark}

We will use a different parametrisation, which is more suitable for our applications.
\begin{lemma}\label{lemma: 3-to-1 parametrisation}
 There is a 3-to-1 map
 \[
 \left\{ (P, Q) : \begin{array}{c} P \text{ of degree }\leq 3 \\ Q \text{ of degree } \leq 2 \\ f(x) = P^2-Q^3 \end{array} \right\} \longrightarrow J_X[3] \setminus \{0\}.
 \]
 Moreover, if $(P, Q)$ maps to the 3-torsion point $R$, then $(-P, Q)$ maps to $-R$.
\end{lemma}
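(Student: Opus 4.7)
The plan is to reduce Lemma~2.3 directly to the bijection already established in Lemma~2.1 by normalising $Q$ to be monic up to scaling. First I would note that $Q \ne 0$ in any such decomposition: since $f$ is separable of degree $5$ or $6$, it cannot be a square, so $f = P^2$ is impossible. Hence $Q$ has a well-defined degree $d \in \{0,1,2\}$ and a non-zero leading coefficient $c$. Setting $H \colonequals Q/c$ (monic of degree $d$) and $\lambda \colonequals c^3$ gives a decomposition $f = P^2 - \lambda H^3$ of exactly the type treated by Lemma~2.1, to which I attach the corresponding non-trivial $3$-torsion point $R_{(P,Q)}$. This defines the map claimed in the statement.

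The fibre count is then immediate from the bijection of Lemma~2.1: two pairs $(P_1,Q_1)$ and $(P_2,Q_2)$ produce the same triple $(P,\lambda,H)$ precisely when $P_1=P_2$ and $Q_1^3 = Q_2^3$, i.e.\ $Q_1 = \omega Q_2$ for some cube root of unity $\omega \in \overline{K}$. Since there are exactly three cube roots of unity, each non-trivial $3$-torsion point has exactly three preimages, which gives the $3$-to-$1$ conclusion (interpreted over $\overline{K}$).

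For the sign assertion, Lemma~2.1 represents $R_{(P,Q)}$ by the degree-$0$ divisor $\tfrac{1}{3}\operatorname{div}(y+P)$; the pair $(-P,Q)$, which uses the same $(\lambda,H)$, is then correspondingly associated with $\tfrac{1}{3}\operatorname{div}(y-P)$. The key identity on $X$ is $(y+P)(y-P) = y^2 - P^2 = f - P^2 = -Q^3$, which yields $\operatorname{div}(y+P) + \operatorname{div}(y-P) = 3\operatorname{div}(Q)$. Dividing by $3$ and observing that $\operatorname{div}(Q)$ is a principal divisor on $X$ (being the pullback of $\operatorname{div}(Q)$ from $\mathbb{P}^1$), we conclude $R_{(P,Q)} + R_{(-P,Q)} = 0$ in $J_X$, so that $R_{(-P,Q)} = -R_{(P,Q)}$.

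I do not expect a serious obstacle. The only subtlety is the choice of base field: the $3$-to-$1$ count is most naturally stated over $\overline{K}$, because the three preimages of a given torsion point correspond to a choice of cube root of $\lambda$, and over $K$ itself the preimages may fail to be individually defined (they can merge into a single Galois orbit). This does not affect the statement, which we interpret on $\overline{K}$-valued pairs.
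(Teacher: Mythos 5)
Your proof is correct and follows essentially the same route as the paper: both reduce to the bijection of Lemma \ref{lemma: 3-torsion points}, count the three choices of $Q$ with $Q^3=\lambda H^3$ (equivalently, the three cube roots of unity), and prove the sign assertion via $\operatorname{div}(y+P)+\operatorname{div}(y-P)=\operatorname{div}(-Q^3)=3\operatorname{div}(Q)$ with $\operatorname{div}(Q)$ principal. Your explicit remarks that $Q\neq 0$ and that the count is taken over $\overline{K}$ are harmless clarifications of points the paper leaves implicit.
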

\begin{proof}
 Follows from the previous lemma: for every decomposition $f(x) = P(x)^2-\lambda H(x)^3$ as in Lemma \ref{lemma: 3-torsion points} there are precisely three polynomials $Q(x)$ that satisfy $-Q(x)^3 = -\lambda H(x)^3$. For the last statement, the claim is that $\frac{1}{3} \operatorname{div}(y+P(x))$ and $\frac{1}{3} \operatorname{div}(y-P(x))$ represent torsion points whose sum is zero. In fact, their sum is $\frac{1}{3}\operatorname{div}(y^2-P(x)^2) = \frac{1}{3} \operatorname{div} (-Q(x)^3) = \operatorname{div} Q(x)$, which is a principal divisor and therefore represents the zero of $J_X$.
\end{proof}

Let $R$ be a point of order 3 on $J_X$. By definition, if we consider $R$ as a divisor of degree $0$ on $X$, then $3R$ is principal, so we have $3R = \operatorname{div} \varphi$ for some function $\varphi$ on $X$. If $R$ corresponds to the pair $(P, Q)$ as in Lemma \ref{lemma: 3-to-1 parametrisation}, the function $\varphi$ can be taken to be $y+P(x)$.
The $3$-torsion point defines a degree-3 étale cover $Y$ of $X$, an affine model of which is given by
\begin{equation}\label{eq: equations for the étale cover}
    \begin{cases}
t^3 = \varphi = y+P(x) \\
y^2 = f(x).
\end{cases}
\end{equation}

Equivalently, the function field of $Y$ is generated over the function field $K(X)=K(x,y)$ by the element $t$, with minimal polynomial $\mu_t(z) = z^3 -(y + P(x)) \in K(x,y)[z]$. Note that the converse also holds: any cyclic degree-3 étale cover of $X$ is the pull-back to $X$ of a degree-3 cover of $J_X$, hence it corresponds to a $3$-torsion point of $J_X$.
The following lemma describes the dependence of $Y$ on the choice of $(P, Q)$:
\begin{lemma}\label{lemma: -P gives the same cover}
    With notation as above,
    \begin{enumerate}
        \item $Y$ only depends on $P$;
        \item the pairs $(P, Q)$ and $(-P, Q)$ give rise to the same cover $Y$;
        \item the curve $Y$ has genus 4.
    \end{enumerate}
\end{lemma}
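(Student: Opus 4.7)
The plan is to dispatch the three parts in order, each via a short structural observation. For (1), I would simply note that the defining equations $t^3 = y+P(x)$, $y^2 = f(x)$ from \eqref{eq: equations for the étale cover} involve only $P$; the auxiliary polynomial $Q$ enters Lemma \ref{lemma: 3-to-1 parametrisation} only through the identity $f = P^2-Q^3$, and the three choices of $Q$ (which differ by a cube root of unity) thus all produce the same curve $Y$. No calculation is needed here.

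For (2), the one-line identity to exploit is $(y+P(x))(y-P(x)) = y^2-P(x)^2 = -Q(x)^3$, which holds in $K(X)$. Writing $Y$ and $Y'$ for the covers attached to $(P,Q)$ and $(-P,Q)$ respectively, so that $Y'\to X$ is cut out by $s^3 = y-P(x)$, I would define in the function field of $Y$ the element $s \colonequals -Q(x)/t$ and verify
\[
s^3 = -\frac{Q(x)^3}{t^3} = -\frac{Q(x)^3}{y+P(x)} = \frac{(y+P(x))(y-P(x))}{y+P(x)} = y-P(x).
\]
Thus the function field of $Y'$ embeds into that of $Y$, and since both are degree-$3$ extensions of $K(X)$ they must coincide, giving $Y \cong Y'$ as covers of $X$.

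For (3), I would apply Riemann-Hurwitz to the degree-$3$ étale cover $\pi: Y\to X$ constructed from the $3$-torsion point $R$. Étaleness is built into the set-up: by Lemma \ref{lemma: 3-torsion points}, the divisor $\operatorname{div}(y+P(x))$ equals $3R$, so every zero and every pole of $y+P(x)$ on $X$ has multiplicity divisible by $3$, and adjoining a cube root of $y+P(x)$ therefore introduces no ramification. With no ramification contribution, Riemann-Hurwitz gives $2g_Y-2 = 3(2g_X-2) = 6$, whence $g_Y = 4$.

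The only place where one must pause is the étaleness invoked in (3); this is really the content of the cube-divisibility of $\operatorname{div}(y+P(x))$, already established via Lemma \ref{lemma: 3-torsion points}, so I do not anticipate any substantial obstacle. The calculation in (2) is the most concrete point, but it is a single identity and poses no difficulty.
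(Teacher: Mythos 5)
Your proof is correct, and parts (1) and (3) follow the paper's route (part (1) is read off from the defining equations; part (3) is Riemann--Hurwitz for an unramified degree-$3$ cover, where you additionally spell out the étaleness via the cube-divisibility of $\operatorname{div}(y+P(x))$, which the paper takes as already established). The one genuine difference is in part (2): the paper exhibits the isomorphism $(x,y,t_+)\mapsto(x,-y,-t_+)$, which lies over the hyperelliptic involution of $X$, whereas your map $s\mapsto -Q(x)/t$ fixes $x$ and $y$ and therefore gives an isomorphism of $Y_+$ with $Y_-$ \emph{as covers of $X$}, i.e.\ over the identity. Your version is slightly stronger and arguably more natural, since it reflects the fact that $R$ and $-R$ generate the same cyclic subgroup of $J_X[3]$ and hence define the same cyclic cover; it is also the multiplicative inverse of the lift $\iota_Y(t)=Q(x)/t$ of the hyperelliptic involution appearing in Lemma \ref{lemma: lift hyperelliptic involution}, so the two isomorphisms differ exactly by that lift. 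The only point you gloss over -- that $s=-Q(x)/t$ does not already lie in $K(X)$, so that the degree count closes the argument -- is immediate (otherwise $t=-Q(x)/s$ would lie in $K(X)$, contradicting that the cover is nontrivial), but is worth a half-sentence.
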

\begin{proof}
    Part (1) is immediate from the explicit equations \eqref{eq: equations for the étale cover}. As for (2), denote by $Y_+$ the cover \eqref{eq: equations for the étale cover} for the pair $(P, Q)$ and by $Y_-$ its analogue for the pair $(-P, Q)$. Let $t_+, t_-$ be the $t$-coordinates on $Y_+, Y_-$ respectively. An isomorphism $Y_+ \to Y_-$ is given by the map $(x, y, t_+) \mapsto (x, -y, -t_+)$. Finally, the genus of $Y$ can be computed using the Riemann--Hurwitz formula: since the map $Y \to X$ is unramified and of degree 3, we have
    \[
    2g(Y)-2 = 3(2g(X)-2) = 6 \Rightarrow g(Y)=4.
    \]
\end{proof}

Next, we describe a subgroup of $\operatorname{Aut}(Y)$ coming from the geometry of the construction.
    
\begin{lemma}\label{lemma: lift hyperelliptic involution} The following hold:
\begin{enumerate}
    \item The hyperelliptic involution $\iota_X$ of $X$ lifts to an automorphism $\iota_Y$ of $Y$, defined at the level of function fields by
 \[
 \iota_Y(x) = x, \quad \iota_Y(y) = -y, \quad \iota_Y(t)=Q(x)/t.
 \]
    \item The curve $Y_{\overline{K}}$ has an automorphism $g$ of order 3, defined at the level of function fields by $(x, y, t) \mapsto (x, y, \zeta_3 t)$, with $\zeta_3$ a primitive third root of unity. This automorphism generates the group $\operatorname{Aut}(Y_{\overline K}/X_{\overline K})$ of $\overline{K}$-automorphisms of the cover $Y \to X$.
    \item The subgroup of $\operatorname{Aut}(Y_{\overline K})$ generated by $\iota_Y$ and $\orderThreeAuto$ is isomorphic to $S_3$ and defined over $K$. This makes $Y_{\overline K}$ into a (ramified) Galois $S_3$-cover of $\mathbb{P}_1$. The original curve $X_{\overline{K}}$ corresponds to the unqiue subgroup of order 3, which is generated by $\orderThreeAuto$.
\end{enumerate}
 
\end{lemma}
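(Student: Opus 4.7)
The proof consists of three essentially independent verifications, mostly at the level of function fields.

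For part (1), the plan is to show that the prescribed formulas extend the $K$-algebra involution $x \mapsto x$, $y \mapsto -y$ of $K(X)$ to a $K$-algebra automorphism of $K(Y) = K(X)[t]/(t^3 - (y+P(x)))$. The only nontrivial check is compatibility with the cubic relation: I need $(\iota_Y(t))^3 = \iota_Y(y+P(x))$, i.e., $Q(x)^3/t^3 = P(x)-y$. Multiplying through by $t^3 = y+P(x)$, this reduces to $Q(x)^3 = (P(x)-y)(P(x)+y) = P(x)^2 - y^2 = P(x)^2 - f(x)$, which is exactly the defining relation $f = P^2 - Q^3$ from \Cref{lemma: 3-to-1 parametrisation}. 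The involution property $\iota_Y^2(t) = Q/(Q/t) = t$ is then immediate.

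For part (2), the map $(x,y,t)\mapsto(x,y,\zeta_3 t)$ obviously preserves both defining relations, has order $3$, and fixes $\overline{K}(X) \subset \overline{K}(Y)$; so $\orderThreeAuto \in \operatorname{Aut}(Y_{\overline{K}}/X_{\overline{K}})$. Over $\overline{K}$ the polynomial $z^3 - (y+P(x))$ splits in $\overline{K}(Y)$ as $\prod_{i=0}^{2}(z-\zeta_3^i t)$, so the cover $Y_{\overline K}\to X_{\overline K}$ is Galois of degree $3$, and the cyclic group $\langle \orderThreeAuto \rangle$ of order $3$ already fills out the entire automorphism group.

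For part (3), the plan is to compute the commutation relation and then read off the quotient structure. Using that $\iota_Y$ is defined over $K$ and hence fixes $\overline{K}$ pointwise, one finds
\[
\iota_Y \orderThreeAuto \iota_Y^{-1}(t) = \iota_Y \orderThreeAuto(Q(x)/t) = \iota_Y\bigl(Q(x)/(\zeta_3 t)\bigr) = \zeta_3^{-1} Q(x)/\iota_Y(t) = \zeta_3^{-1} t = \orderThreeAuto^{-1}(t),
\]
while $\iota_Y, \orderThreeAuto$ act trivially on $x,y$; combined with $\iota_Y^2 = \orderThreeAuto^3 = \operatorname{id}$, this gives the dihedral presentation of $S_3$. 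The order is exactly $6$ because $\iota_Y \notin \langle \orderThreeAuto \rangle = \operatorname{Aut}(Y_{\overline K}/X_{\overline K})$ (indeed $\iota_Y$ moves $y$). To see that $\langle \iota_Y, \orderThreeAuto \rangle$ is defined over $K$, note that $\iota_Y$ is already $K$-rational, while $\operatorname{Gal}(\overline{K}/K)$ either fixes $\orderThreeAuto$ or sends it to $\orderThreeAuto^{-1}$ depending on its action on $\zeta_3$; in both cases the subgroup is preserved. Finally, the quotient $Y/\langle \orderThreeAuto \rangle = X$ by part (2), and $X/\langle \iota_X \rangle = \mathbb{P}_1$ since $\iota_X$ is the hyperelliptic involution, so $Y/\langle \iota_Y, \orderThreeAuto \rangle = \mathbb{P}_1$. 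The uniqueness of the $3$-Sylow in $S_3$ identifies $\langle \orderThreeAuto \rangle$ as the subgroup cutting out $X$.

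The only mild subtleties are the algebraic verification in part~(1) (where the definition $f = P^2 - Q^3$ is used in a crucial way, and this is the real reason the construction works) and the descent argument in part~(3) showing that $\langle \iota_Y, \orderThreeAuto \rangle$ is $K$-rational despite $\orderThreeAuto$ itself not being so; everything else is standard function-field Galois theory.
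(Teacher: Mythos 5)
Your proof is correct and part (1) — the only part the paper actually proves, via the identity $Q(x)^3/t^3 = (P(x)^2-f(x))/(y+P(x)) = P(x)-y$ — is the same computation as the paper's, just cleared of denominators. Parts (2) and (3), which the paper dismisses as immediate, are filled in correctly by your dihedral-relation and Galois-descent arguments (the only quibble being that $\iota_Y$ does not act trivially on $y$, though the conjugation check still goes through since $\iota_Y^2=\mathrm{id}$).
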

\begin{proof}
Parts (2) and (3) are immediate. We show (1).
To check that $\iota_Y$ is well-defined, we verify that $\iota_Y(t)$ is a root of $\iota_X( \mu_t(z))$, that is, we check $\iota_Y(t)^3 = \iota_X(\varphi)$: indeed, one has
\[
\iota_Y(t)^3 = \frac{Q(x)^3}{t^3} = \frac{P(x)^2-f(x)}{y+P(x)} = \frac{P(x)^2-y^2}{y+P(x)} = P(x)-y = \iota_X(P(x)+y) = \iota_X(\varphi).
\]
\end{proof}

 The three elements of order 2, namely $\iota_Y \cdot \orderThreeAuto^i$ for $i=0,1,2$, give rise to quotient maps $Y \to Y / \langle \iota_Y \orderThreeAuto^i \rangle$. We now write down explicitly the quotient corresponding to $\iota_Y$.
\begin{lemma}
 The quotient $Y/\langle \iota_Y \rangle$ is a curve $E$ of genus 1 with function field $K(x, w)$, where $w$ is the function $t+\iota_Y(t) = t + \frac{Q(x)}{t}$.
\end{lemma}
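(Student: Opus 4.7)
The plan is to (a) produce an explicit cubic relation satisfied by $w$ over $K(x)$, (b) use Galois theory to upgrade the inclusion $K(x,w) \subseteq K(Y)^{\langle \iota_Y \rangle}$ into an equality, and (c) compute the genus of $E$ via Riemann--Hurwitz applied to the degree-$2$ quotient $Y \to E$.

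First I would compute the minimal polynomial of $w$ over $K(x)$ directly. Setting $s \colonequals Q(x)/t$ so that $w = t + s$ and $ts = Q(x)$, expanding gives $w^3 = t^3 + s^3 + 3ts(t+s) = t^3 + s^3 + 3Q(x)\,w$. Using $t^3 = y + P(x)$ and the identity $s^3 = Q(x)^3/(y+P(x)) = (P(x)^2 - y^2)/(y + P(x)) = P(x) - y$ (which relies on $P^2 - Q^3 = f = y^2$ from the parametrisation of Lemma~\ref{lemma: 3-to-1 parametrisation}), the sum $t^3 + s^3$ collapses to $2P(x)$. Hence $w$ satisfies
\[
w^3 - 3 Q(x)\, w - 2 P(x) = 0,
\]
so in particular $[K(x,w):K(x)] \le 3$. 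Since $\iota_Y(x) = x$ and $\iota_Y(w) = \iota_Y(t) + Q(x)/\iota_Y(t) = Q(x)/t + t = w$, we have $K(x,w) \subseteq K(Y)^{\langle \iota_Y \rangle}$.

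Next I would identify $K(x,w)$ with the full fixed field. By Lemma~\ref{lemma: lift hyperelliptic involution}(3), the group $S_3$ acts on $K(Y)$ with fixed field $K(x)$, so Galois theory gives $[K(Y)^{\langle \iota_Y \rangle} : K(x)] = [S_3 : \langle \iota_Y \rangle] = 3$. Since $K(x,w)$ sits in between, it remains to rule out $w \in K(x)$: in that case $t$ would satisfy the quadratic $t^2 - w\, t + Q(x) = 0$ with coefficients in $K(x)$, forcing $[K(x,t):K(x)] \le 2$; but then $y = t^3 - P(x) \in K(x,t)$ would give $K(Y) = K(x,y,t) = K(x,t)$ of degree at most $2$ over $K(x)$, contradicting $[K(Y):K(x)] = 6$. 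Hence $K(x,w) = K(Y)^{\langle \iota_Y \rangle}$, which is the function field of $E = Y/\langle \iota_Y \rangle$.

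Finally, for the genus I would apply Riemann--Hurwitz to the degree-$2$ cover $Y \to E$. Because $\iota_Y$ covers the hyperelliptic involution $\iota_X$, any fixed point of $\iota_Y$ lies above one of the six Weierstrass points of $X$. Above each such Weierstrass point the cover $Y \to X$ is étale of degree $3$, so the fibre consists of three points forming a single $\langle \orderThreeAuto \rangle$-orbit; a non-trivial order-$2$ permutation of a $3$-element set has exactly one fixed point, so we get exactly one fixed point per Weierstrass point, i.e.\ six fixed points in total. Riemann--Hurwitz then reads
\[
2g(Y) - 2 = 2\bigl( 2 g(E) - 2 \bigr) + 6,
\]
and plugging in $g(Y) = 4$ from Lemma~\ref{lemma: -P gives the same cover}(3) yields $g(E) = 1$. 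The only real subtlety is the step ruling out $w \in K(x)$; everything else is an explicit computation or a standard Galois-theoretic/Riemann--Hurwitz input.
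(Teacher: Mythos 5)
Your argument is correct and reaches the same conclusion, but both halves take a somewhat different route from the paper. For the identification of the fixed field, the paper works from above: it shows $[K(x,y,t):K(x,w)]\le 2$ directly (via $t^2-wt+Q(x)=0$ over $K(x,w)$ and $y=t^3-P(x)$) and compares with $[K(Y):K(Y)^{\langle \iota_Y\rangle}]=2$; you work from below, using $[K(Y)^{\langle \iota_Y\rangle}:K(x)]=3$ and ruling out $w\in K(x)$. Both are clean; yours has the small bonus of producing the cubic $w^3-3Q(x)w-2P(x)=0$ inside the proof, which the paper only derives immediately afterwards as the equation of $E$, while the paper's version avoids needing that relation at all. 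For the genus, the paper explicitly solves for the fixed points of $\iota_Y$ (finding $y=0$, $f(x)=0$, $t=P(x)/Q(x)$, and checking that $Q$ does not vanish at a root of $f$), whereas you count them abstractly, one per Weierstrass point; your count sidesteps that finiteness check entirely.

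One step in your fixed-point count deserves justification: you assert that $\iota_Y$ induces a \emph{non-trivial} permutation of the three points in the fibre over each Weierstrass point, but a priori an involution of a $3$-element set could also be the identity (giving $3$ fixed points there instead of $1$, and Riemann--Hurwitz alone does not exclude a mixture of such fibres). This is easily repaired using the $S_3$-structure from Lemma \ref{lemma: lift hyperelliptic involution}(3): since $\iota_Y \orderThreeAuto \iota_Y^{-1}=\orderThreeAuto^{-1}$, if $\iota_Y$ fixes a point $p$ of the fibre then $\iota_Y(\orderThreeAuto p)=\orderThreeAuto^{-1}p\neq \orderThreeAuto p$, so $\iota_Y$ cannot act trivially on any fibre, and (the number of fixed points in a fibre being odd) it fixes exactly one point per Weierstrass point. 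With that sentence added, your proof is complete.
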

\begin{proof}
 We compute the subfield $K(x, y, t)$ fixed by $\iota_Y$, which gives the function field of $Y/\langle \iota_Y \rangle$. It is clear that $x$ and $w$ are fixed by $\iota_Y$. Moreover, the degree $[K(x,y,t) : K(x,w)]$ is at most 2. Indeed, $t$ satisfies the equation $t^2-wt+Q(x) = 0$, and we have $K(x, w)(t) = K(x, y, t)$ since $y=t^3-P(x) \in K(x, w)(t)$. Finally, Galois theory shows that $[K(x,y,t) : K(x,y,t)^{\langle \iota_Y \rangle}]=2$, so $K(x,y,t)^{\langle \iota_Y \rangle}=K(x, w)$ as claimed. To check that $E$ has genus 1 we compute the ramification of the degree-2 quotient map $\phi: Y \to E$. The ramification points are the fixed points of $\iota_Y$, which are given by 
 \[
 \begin{cases}
 y=0 \\ 
 t^2=Q(x) \\
 y^2 = f(x) \\
 t^3 = y + P(x).
 \end{cases}
 \]
The solutions are given by $y=0, f(x)=0, t=P(x)/Q(x)$, a set of six points corresponding bijectively to the roots of $f(x)$. Note that $P(x)/Q(x)$ is finite for values of $x$ such that $f(x)=0$, since $f(x_0)=Q(x_0)=0$ implies that also $P(x_0)$ vanishes, in which case $x_0$ is (at least) a double root of $f(x) = P(x)^2 - Q(x)^3$, contradiction.

Each of these six points has ramification index $2$ since $\phi$ has degree 2. 
The Riemann-Hurwitz formula gives
 \[
 6 = 2g(Y)-2 = 2(2g(E)-2) + 6,
 \]
 which implies $g(E)=1$.
\end{proof}

We can now write down an equation for $E$: the functions $x$ and $w=t+Q(x)/t=t + \iota_Y(t)$ satisfy
\[
w^3 = (t+\iota_Y(t))^3 = t^3 + \iota_Y(t^3) + 3t \iota_Y(t) (t+\iota_Y(t)) = 2P(x) + 3Q(x)w,
\]
and one checks easily that 
\begin{equation}\label{eq: affine equation for E}
w^3 - 3Q(x)w - 2P(x)=0
\end{equation}
is an affine equation for $E$.

\begin{remark}
    Replacing $\iota_Y$ with any of the two other involutions produces an equation like \eqref{eq: affine equation for E} in which $Q(x)$ is replaced by $\theta Q(x)$, where $\theta$ is a primitive third root of unity. The resulting elliptic curves are all isomorphic. 
\end{remark}

Now fix a point $q_0 \in X(\overline{K})$ distinct from the six Weierstrass points of $X$. 
There are three points $p_1, p_2, p_3$ in $Y(\overline{K})$ lying over $q_0$; they all have the same $x$-coordinate, equal to the one of $q_0$. Notice that no two of these three points are exchanged by $\iota_Y$: if they were, $q_0$ would be fixed by $\iota_X$, hence would be a Weierstrass point.
The images of $p_1, p_2, p_3$ under the map $Y \to Y/\langle \iota_Y \rangle$ are therefore all distinct, hence they are the three points on \eqref{eq: affine equation for E} with $x=x(q_0)$.

We summarise the above discussion in the following proposition, where we denote by $\phi$ the quotient map $\phi : Y \to E=Y/\langle \iota_Y \rangle$.

\begin{proposition}\label{prop: properties of the construction}
Let $P(x) \in K[x], Q(x) \in K[x]$ be polynomials of degrees at most $3, 2$ respectively. Suppose that $f(x) = P(x)^2-Q(x)^3$ is a polynomial of degree 6 without repeated roots.
Let $X$ be the smooth projective curve of genus 2 with affine equation $y^2=f(x)=P(x)^2-Q(x)^3$ and denote by $q_0$ a non-special (that is, non-Weierstrass) point of $X$.

\begin{enumerate}
 \item The datum of $(P, Q)$ defines a 3-torsion point on the Jacobian $J_X$ of $X$. Let $\pi : Y \to X$ be the corresponding étale cover of degree 3. The inverse image $\pi^{-1}(q_0)$ of $q_0$
 consists of 3 distinct points $p_1, p_2, p_3$. 
 \item The hyperelliptic involution of $X$ lifts (non-uniquely) to an involution $\iota_Y$ of $Y$, as in Lemma \ref{lemma: lift hyperelliptic involution}.
 \item Let $\orderThreeAuto$ be a generator of $\operatorname{Aut}(Y_{\overline{K}}/X_{\overline{K}}) \cong \mathbb{Z}/3\mathbb{Z}$. The subgroup $\langle \iota_Y, \orderThreeAuto \rangle$ of $\operatorname{Aut}(Y_{\overline{K}})$ is defined over $K$. The group $\langle \iota_Y, \orderThreeAuto \rangle$ is isomorphic to $S_3$ and the quotient $Y / \langle \iota_Y, \orderThreeAuto \rangle$ is isomorphic to $\mathbb{P}_1$. 
 \item The quotient map $\phi : Y \to Y/\langle \iota_Y \rangle$ has degree 2, and the quotient curve $E \colonequals Y/\langle \iota_Y \rangle$ has genus 1. 
 \item An affine plane model for $E$ is given by \eqref{eq: affine equation for E}. 
 \item The images of $p_1, p_2, p_3$ in $E$ are the three points with $x$-coordinate equal to the $x$-coordinate of $q_0$.
\end{enumerate}
\end{proposition}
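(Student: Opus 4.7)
The proposition summarises conclusions already established in this section, so my plan is to assemble its six items from the preceding lemmas, reserving genuine (small) work for item (6). For (1), the correspondence between pairs $(P,Q)$ and non-trivial $3$-torsion points of $J_X$ is exactly Lemma \ref{lemma: 3-to-1 parametrisation}, and the degree-$3$ cover $\pi : Y \to X$ is the one given explicitly by \eqref{eq: equations for the étale cover}. Since this cover is étale of degree $3$ (hence unramified), its fibre over any point of $X$ consists of exactly three distinct geometric points; applied to $q_0$ this yields the three preimages $p_1, p_2, p_3$. Item (2) is immediately part (1) of Lemma \ref{lemma: lift hyperelliptic involution}.

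For (3), parts (2) and (3) of Lemma \ref{lemma: lift hyperelliptic involution} supply everything except the identification $Y/\langle \iota_Y, \orderThreeAuto \rangle \simeq \P_1$. To see this, note that $\langle \orderThreeAuto \rangle$ is normal in $\langle \iota_Y, \orderThreeAuto \rangle \cong S_3$, so the quotient can be taken in two stages: first $Y/\langle \orderThreeAuto \rangle = X$, and then since $\iota_Y$ descends to the hyperelliptic involution $\iota_X$ on $X$, one has $X/\langle \iota_X \rangle = \P_1$. Parts (4) and (5) follow from the preceding lemma identifying the function field $K(x,w)$ of $Y/\langle \iota_Y \rangle$ and computing the ramification of $\phi$, together with the short identity verifying that $w = t + Q(x)/t$ satisfies $w^3 - 3Q(x)w - 2P(x) = 0$.

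The only step requiring a genuinely new (albeit small) argument is (6), and this is where I expect the main obstacle to lie. The three preimages $p_1, p_2, p_3$ of $q_0$ all have $x$-coordinate equal to $x(q_0)$, so their images under $\phi$ lie among the points of $E$ with $x$-coordinate $x(q_0)$. Since \eqref{eq: affine equation for E} is cubic in $w$ for fixed $x$, there are at most three such points on $E$, so it suffices to show that $\phi(p_1), \phi(p_2), \phi(p_3)$ are pairwise distinct. If instead $\phi(p_i) = \phi(p_j)$ for some $i \neq j$, then $\iota_Y$ would exchange $p_i$ and $p_j$; pushing down via $\pi$, the hyperelliptic involution $\iota_X$ would fix $q_0 = \pi(p_i) = \pi(p_j)$, contradicting the hypothesis that $q_0$ is not a Weierstrass point. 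Beyond this small observation, the entire proof is bookkeeping built from the previous lemmas.
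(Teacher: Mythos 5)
Your proposal is correct and follows essentially the same route as the paper: items (1)--(5) are assembled from Lemmas \ref{lemma: 3-to-1 parametrisation} and \ref{lemma: lift hyperelliptic involution} and the genus-1 quotient computation, and item (6) is proved exactly as in the paper's preceding discussion, by observing that no two of the $p_i$ can be exchanged by $\iota_Y$ since otherwise $q_0$ would be fixed by $\iota_X$ and hence be a Weierstrass point. The only cosmetic difference is that you phrase (6) as a counting argument (at most three points with given $x$-coordinate, and three distinct images), which is a harmless repackaging of the paper's statement that the distinct images must be the three points with $x = x(q_0)$.
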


Finally, the above construction can be run in families (equivalently, one can take $K$ as the function field of a variety instead of a number field). For future reference, we set some notation related to the construction just described:
\begin{setup}\label{setup}
Let $\tilde{X} \to \ModuliSpace$ be a family of smooth projective curves of genus $2$ defined over a number field $k$ and let $R$ be a $3$-torsion section of $\Jac(\tilde{X}) \to \ModuliSpace$.
At least Zariski-locally on $\ModuliSpace$, the family $\tilde{X}_\ModuliParameter$ is given by an affine hyperelliptic model $y^2 = f_\ModuliParameter(x)$, and we assume that $R$ is represented by a pair $(P_\ModuliParameter, Q_\ModuliParameter)$ such $f_\ModuliParameter(x) = P_\ModuliParameter(x)^2 - Q_\ModuliParameter(x)^3$, as in \Cref{lemma: 3-to-1 parametrisation} (achieving this may require passing to a degree-3 cover of the base $\ModuliSpace$). 
Fix moreover a generically non-special section $q_\ModuliParameter$ of $\tilde{X} \to \ModuliSpace$. To this data, we attach:
\begin{enumerate}
    \item the open curve $X_\ModuliParameter = \tilde{X}_\ModuliParameter \setminus \{q_\ModuliParameter\}$ and the `point at infinity' $q_\ModuliParameter$;
  
    \item the $3$-to-$1$ cyclic étale cover $\pi : \tilde{Y}_\ModuliParameter \to \tilde{X}_\ModuliParameter$ corresponding to the $3$-torsion point $R_\ModuliParameter$, see \eqref{eq: equations for the étale cover};
    \item a generator $\orderThreeAuto$ of the Galois group of $\tilde{Y}_\ModuliParameter \to \tilde{X}_\ModuliParameter$ (we assume for simplicity that $k$ contains a third root of unity, so that $\orderThreeAuto$ is defined over $k$);
    \item the three `points at infinity' $\{p_{1, \ModuliParameter}, p_{2, \ModuliParameter}, p_{3, \ModuliParameter}\} = \pi^{-1}(q_\ModuliParameter)$ of $\tilde{Y}_\ModuliParameter$, numbered cyclically in such a way that $p_{2, \ModuliParameter}=\orderThreeAuto (p_{1, \ModuliParameter})$, $p_{3, \ModuliParameter}=\orderThreeAuto (p_{2, \ModuliParameter})$ and $p_{1, \ModuliParameter} = \orderThreeAuto (p_{3, \ModuliParameter})$;
    \item the family of genus-1 curves $E_\ModuliParameter \to \ModuliSpace$ given by Equation \eqref{eq: affine equation for E};
    \item the images $\phi(p_{1,\ModuliParameter}), \phi(p_{2,\ModuliParameter}), \phi(p_{3,\ModuliParameter})$ of the $p_{i, \ModuliParameter}$ in $E_\ModuliParameter = \tilde{Y}_\ModuliParameter / \langle \iota_{Y_{\ModuliParameter}} \rangle$. In the model \eqref{eq: affine equation for E}, these are the points with $x$-coordinate equal to $x(q_\ModuliParameter)$.
\end{enumerate}
We will often suppress the subscript $\ModuliParameter$, in particular when the base $\ModuliSpace$ is a single point, and write simply $X, Y, E, p_i$, etc.

    The family of genus-1 curves $E \to \ModuliSpace$ becomes an elliptic scheme by either replacing $E$ with its Jacobian, or by extending the base and fixing a section $e : \ModuliSpace \to E$ that gives the zero of the group law. By abuse of notation, we will often denote either of these elliptic schemes by the same symbol $E \to \ModuliSpace$ (up to replacing $\ModuliSpace$ by a finite cover, they become isomorphic). The differences $\phi(p_{2,\ModuliParameter})-\phi(p_{1,\ModuliParameter})$, $\phi(p_{3,\ModuliParameter})-\phi(p_{2,\ModuliParameter})$, and $\phi(p_{1,\ModuliParameter})-\phi(p_{3,\ModuliParameter})$ make sense both as elements of $\Jac(E)$ and as elements of $E$, independent of the choice of the origin of the group law; note that their sum is zero. Dropping the subscript $t$, we will denote by 
    \[
    \begin{array}{cccc}
    \sigma : & \ModuliSpace & \to & E^2 \\
    & t & \mapsto & (\phi(p_1)-\phi(p_2), \phi(p_2)-\phi(p_3))
    \end{array}
    \]
    the section of the doubly-elliptic scheme thus constructed (see also \Cref{lemma: projection to E^2} below).
\end{setup}

\subsection{The Jacobian of the degree-3 étale cover}
We now investigate the structure of $\Jac(Y)$.

\begin{lemma}\label{lemma: projection to E^2} Notation as in \Cref{setup}.
The following hold:
\begin{enumerate}
    \item The natural map $\phi : Y \to Y/\langle \iota_Y \rangle = E$ and the composition $\phi \circ \orderThreeAuto$ induce homomorphisms $\Jac(Y) \to \Jac(E)$ that we still denote by $\phi, \phi \circ \orderThreeAuto$. The homomorphism
    \[
    \begin{array}{cccc}
    \Phi : & \Jac(Y) & \to & \Jac(E)^2 \\
    & Q  & \mapsto & (\phi(Q), \phi \circ \orderThreeAuto(Q))
    \end{array}
    \]
    gives an isogeny between $\ker \left(\Jac(Y) \to \Jac(X)\right)$ and $\Jac(E)^2$.
    \item Let $Q = [(p_1)-(p_2)] \in \Jac(Y)$. The image of $Q$ in $E^2$ via $\Phi$ is $( \phi(p_1)-\phi(p_2), \phi(p_2)-\phi(p_3) )$.
\end{enumerate}
\end{lemma}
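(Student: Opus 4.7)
My plan is to analyse the statement by exploiting the $S_3$-action on $Y$ constructed in \Cref{lemma: lift hyperelliptic involution} and its induced action on $\Jac(Y)$, decomposing the latter up to isogeny into isotypic components for the three irreducible $\Q$-representations of $S_3$: the trivial $\mathbf{1}$, the sign $\epsilon$, and the two-dimensional standard representation $V$. Thus $\Jac(Y) \otimes \Q \cong \mathbf{1}^{m_1} \oplus \epsilon^{m_\epsilon} \oplus V^{m_V}$ as $S_3$-modules. Using the general isogeny $\Jac(Y/H) \sim (\Jac(Y) \otimes \Q)^H$ for $H \leq S_3$, the three data $Y/S_3 = \P_1$ (genus $0$), $Y/\langle \orderThreeAuto \rangle = X$ (genus $2$), and $g(Y) = 4$ force $m_1 = 0$, $m_\epsilon = 2$, $m_V = 1$.

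Next, I identify both the kernel $K \colonequals \ker(\Jac(Y) \to \Jac(X))$ and the target $\Jac(E)^2$ inside this decomposition. Since $\Jac(X)$ is isogenous to the $\langle \orderThreeAuto\rangle$-invariants and since $\epsilon|_{\langle \orderThreeAuto \rangle}$ is trivial while $V|_{\langle \orderThreeAuto \rangle}$ splits as the two non-trivial characters of $\Z/3\Z$, the kernel $K$ is isogenous to the $V$-isotypic piece, hence two-dimensional. Similarly, $\Jac(E) \sim (\Jac(Y)\otimes\Q)^{\langle \iota_Y \rangle}$; since $\epsilon|_{\langle \iota_Y\rangle}$ is the sign representation and $V|_{\langle \iota_Y\rangle}$ decomposes as trivial $\oplus$ sign, this gives $\Jac(E) \sim V^{\langle \iota_Y\rangle}$, which is one-dimensional (consistent with $g(E)=1$).

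Restricted to the $V$-isotypic component, the map $\Phi = (\phi_*, \phi_* \circ \orderThreeAuto_*)$ becomes (via the isogeny $V^{\langle\iota_Y\rangle} \sim \Jac(E)$, using $\phi_*\phi^* = 2\cdot\mathrm{id}$) the linear map $V \to V^{\langle \iota_Y\rangle} \oplus V^{\langle \iota_Y\rangle}$ sending $v \mapsto (\pi(v), \pi(\orderThreeAuto \cdot v))$, where $\pi$ is the projection onto the $\iota_Y$-invariants. In a basis $w_1, w_2$ of $V\otimes\C$ diagonalising $\orderThreeAuto$ with eigenvalues $\zeta_3, \zeta_3^{-1}$ and in which $\iota_Y$ swaps $w_1 \leftrightarrow w_2$, this linear map has matrix $\tfrac12\begin{pmatrix} 1 & 1 \\ \zeta_3 & \zeta_3^{-1} \end{pmatrix}$, with non-zero determinant proportional to $\zeta_3^{-1}-\zeta_3$. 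Hence $\Phi|_K$ has finite kernel; a dimension count then upgrades this to an isogeny $K \to \Jac(E)^2$, establishing part (1).

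Part (2) is a direct unwinding of definitions: $\phi_*$ is pushforward of divisor classes, so $\phi_*([(p_1)-(p_2)]) = [\phi(p_1)-\phi(p_2)]$, and $(\phi\circ \orderThreeAuto)_*([(p_1)-(p_2)]) = [\phi(\orderThreeAuto\, p_1)-\phi(\orderThreeAuto\, p_2)] = [\phi(p_2)-\phi(p_3)]$ by the cyclic numbering $p_{i+1} = \orderThreeAuto(p_i)$. The main conceptual obstacle in part (1) is the book-keeping between `up to isogeny' and `on the nose': it is handled by working systematically with $\Jac(-)\otimes\Q$ in the representation-theoretic analysis and extracting the finiteness of $\ker \Phi|_K$ from the non-vanishing of the explicit determinant above, from which surjectivity follows by comparing dimensions.
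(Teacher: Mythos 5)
Your argument is correct, but it follows a genuinely different route from the paper's. The paper proves surjectivity of $\Phi$ by a direct symmetry trick: it sets $\pi_1=\phi$, $\pi_2=\phi\circ\orderThreeAuto$, $\pi_3=\phi\circ\orderThreeAuto^2$, uses the relation $\pi_1+\pi_2+\pi_3=0$, and shows that any relation $a\pi_1=b\pi_2$ over $\operatorname{End}(E)$ propagates under the $S_3$-action to force $\pi_1=\pi_2=\pi_3$ and hence $3\pi_1=0$, a contradiction; it also checks separately that $\Phi$ kills the image of $\Jac(X)$, and concludes by the same dimension count you use. You instead decompose $\Jac(Y)\otimes\Q$ into $S_3$-isotypic components, identify both $\ker(\Jac(Y)\to\Jac(X))$ and $\Jac(E)$ inside that decomposition, and reduce the whole of part (1) to the invertibility of the explicit $2\times 2$ matrix $\tfrac12\bigl(\begin{smallmatrix}1&1\\ \zeta_3&\zeta_3^{-1}\end{smallmatrix}\bigr)$ on the standard representation $V$; since $V$ is absolutely irreducible with $\operatorname{End}_{S_3}(V)=\Q$, the isotypic piece is honestly of the form $V\otimes B$ and the maps act through $V$ alone, so the determinant computation legitimately yields finiteness of $\ker(\Phi|_K)$ (note that, strictly speaking, the displayed direct sum of representations should be read as an isotypic decomposition in the isogeny category, with multiplicity spaces that are abelian varieties up to isogeny). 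Your approach buys a structural explanation — it recovers $g(E)=1$, $\dim K=2$, and the vanishing of $\Phi$ on $\operatorname{im}(\Jac(X))$ for free from the character theory, and it proves injectivity-up-to-isogeny on $K$ directly rather than deducing surjectivity first; the paper's argument is more elementary and avoids any appeal to the isotypic decomposition of abelian varieties with group action. Part (2) is handled identically in both proofs.
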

\begin{proof}
Note that $\phi$ is the quotient by $\iota_Y$, so we can identify $\phi$ with the map $Q \mapsto Q + \iota_Y(Q)$. 

There is a canonical map $\Jac(X) \to \Jac(Y)$ (pull-back of divisors). If $\sum_i n_i(x_i)$ is a point in $\Jac(X)$, its image in $\Jac(Y)$ is of the form $\sum_i \sum_{j=0}^2 n_i(\orderThreeAuto^j\tilde{x}_i)$, where $\tilde{x}_i$ is any point of $Y$ lying over $x_i$. Applying $\phi$ we get $\sum_i \sum_{j=0}^2 n_i(\orderThreeAuto^j\tilde{x}_i + \iota_Y\orderThreeAuto^j \tilde{x}_i)$, which is a principal divisor: indeed, it is invariant under the group $\langle \iota_Y, \orderThreeAuto \rangle$, so it is pulled back from $Y/\langle \iota_Y, \orderThreeAuto \rangle \cong \mathbb{P}^1$, where every divisor of degree  $0$ is principal. This shows that $\Phi$ is trivial on the canonical image of $\Jac(X)$ in $\Jac(Y)$.
\begin{enumerate}
    \item The map $\Phi$ is clearly a homomorphism of abelian varieties. To see that it is surjective, 
    we consider the three maps $\pi_1 := \phi, \pi_2 := \phi \circ \orderThreeAuto$ and $\pi_3 := \phi \circ \orderThreeAuto^2$: the group $S_3$ acts in the natural way on these, and $\pi_1+\pi_2+\pi_3=0$. We thus get a map $\Jac(Y)\to E^3$ with image in $\{(x,y,z) : x+y+z=0\} \cong E^2$. The surjectivity of this map is equivalent to that of $\Phi$. Suppose this map is \textit{not} surjective, hence $a\pi_1=b\pi_2$ for some $a,b \in \operatorname{End}(E)$ not both zero (say $b \neq 0$). Acting with $S_3$ we find $a\pi_1=b\pi_3$, hence by difference $b(\pi_2-\pi_3)=0$, and therefore $\pi_2=\pi_3$. But this gives $\pi_1=\pi_2=\pi_3$, and from $0 =\pi_1+\pi_2+\pi_3=3\pi_1$ we obtain that $\pi_1=\phi$ is the zero map, contradiction.
    
    We have already shown that $\Phi$ is trivial on the image of $\Jac(X)$ in $\Jac(Y)$. Note that $\ker(\Jac(Y) \to \Jac(X))$ and $\Jac(X)$ intersect in a finite set  inside $\Jac(Y)$.
    Finally, observe that the dimension of $\ker(\Jac(Y) \to \Jac(X))$ is $\dim \Jac(Y) - \dim \Jac(X) = \text{genus}(Y)-\text{genus}(X)=2=\dim E^2$ and recall that  any surjective homomorphism between abelian varieties of the same dimension is an isogeny.
    \item Clear from the definitions: the action of $\orderThreeAuto$ sends $p_1, p_2$ to $p_2, p_3$ respectively, hence $\phi(\orderThreeAuto(Q)) = \phi( (p_2)-(p_3)) = \phi(p_2)-\phi(p_3)$.
\end{enumerate}
\end{proof}

\begin{remark}
    The decomposition (up to isogeny) of the Jacobian of $\tilde{Y}$ as $J_X \times E^2$ also appears in \cite{Lomb} as a special case of the decomposition of jacobians via Galois covers (see in particular Table 7 of \cite{Lomb}).
\end{remark}

\begin{lemma}\label{lemma: torsion on J_Y iff both torsion on E}
    Notation as in \Cref{setup}. Replacing $\ModuliSpace$ with a finite cover of it, we have rational sections $p_i = p_{i, \ModuliParameter} : \ModuliSpace \to \tilde{Y}_\ModuliParameter$ and $\phi(p_i) = \phi(p_{i,\ModuliParameter}) : \ModuliSpace \to E_\ModuliParameter$. The following are equivalent:
    \begin{enumerate}
        \item the sections $p_1-p_2$ and $p_3-p_2$ of $\Jac(\tilde{Y}_\ModuliParameter)$ are both torsion;
        \item the section $p_1-p_2$ of $\Jac(\tilde{Y}_\ModuliParameter)$ is torsion;
        \item the differences $\phi(p_1)-\phi(p_2), \phi(p_3)-\phi(p_2)$ are both torsion on $E_\ModuliParameter$.
    \end{enumerate}
    We note explicitly that these conditions are \textit{not} equivalent to the single section $\phi(p_1)-\phi(p_2)$ being torsion on $E_\ModuliParameter$.
\end{lemma}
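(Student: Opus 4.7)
The plan is to chain together two equivalences, $(1)\Leftrightarrow(2)$ and $(2)\Leftrightarrow(3)$, using the two pieces of structure available: the $S_3$-action on $\tilde{Y}$ (specifically the automorphism $\orderThreeAuto$) and the isogeny $\Phi$ from \Cref{lemma: projection to E^2}. Throughout, ``torsion section'' means that there is an integer $N$ annihilating the section generically, which after passing to a finite cover of $\ModuliSpace$ amounts to fibrewise torsion of bounded order.

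First I would show $(1)\Leftrightarrow(2)$ using the automorphism $\orderThreeAuto$. By the numbering in \Cref{setup}, $\orderThreeAuto$ acts on the $p_i$ as the cyclic permutation $p_1 \mapsto p_2 \mapsto p_3 \mapsto p_1$, hence sends the divisor class $p_1 - p_2$ to $p_2 - p_3$. Since $\orderThreeAuto$ is a morphism of the abelian scheme $\Jac(\tilde{Y}_\ModuliParameter) \to \ModuliSpace$, it preserves the subscheme of $N$-torsion for every $N$. Thus if $p_1-p_2$ has order dividing $N$, so does $\orderThreeAuto^*(p_1-p_2) = p_2 - p_3 = -(p_3-p_2)$, giving $(2)\Rightarrow(1)$; the converse is trivial.

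Next I would prove $(2)\Leftrightarrow(3)$ via the isogeny $\Phi$. Note first that $p_1 - p_2$ lies in $\ker(\Jac(\tilde{Y}_\ModuliParameter) \to \Jac(\tilde{X}_\ModuliParameter))$, because its image under the Albanese/push-forward map is $(q_\ModuliParameter)-(q_\ModuliParameter) = 0$. By \Cref{lemma: projection to E^2}(1), $\Phi$ restricts to an isogeny from this kernel onto $\Jac(E_\ModuliParameter)^2$, and by part (2) of the same lemma,
\[
\Phi(p_1-p_2) = \bigl(\phi(p_1)-\phi(p_2),\; \phi(p_2)-\phi(p_3)\bigr).
\]
An isogeny of abelian schemes preserves and reflects torsion (up to a bounded multiple determined by the degree of the isogeny), and a point of $\Jac(E_\ModuliParameter)^2$ is torsion if and only if both its components are torsion on $\Jac(E_\ModuliParameter)$. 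Hence $p_1-p_2$ is a torsion section if and only if both $\phi(p_1)-\phi(p_2)$ and $\phi(p_2)-\phi(p_3)$ are torsion sections of $E_\ModuliParameter$, giving the equivalence $(2)\Leftrightarrow(3)$.

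I do not foresee any real obstacle: the argument is essentially a bookkeeping exercise combining the Galois symmetry of the cover with the Prym-type decomposition already established. The one point worth stressing in the write-up is the final parenthetical remark, which holds because the isogeny $\Phi$ has as its \emph{target} the product $\Jac(E_\ModuliParameter)^2$; torsion of a single coordinate in the target does not force torsion of a preimage in the source, so $\phi(p_1)-\phi(p_2)$ being torsion by itself is genuinely weaker than $(3)$ — one can for example have a non-torsion section of $E_\ModuliParameter^2$ whose projection to the first factor is identically zero, and pulling back via $\Phi$ produces the type of counterexample one expects.
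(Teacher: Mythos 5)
Your proposal is correct and follows essentially the same route as the paper: the equivalence of (1) and (2) via the order-3 automorphism permuting the $p_i$, and the equivalence with (3) via the isogeny of \Cref{lemma: projection to E^2} between $\ker(\Jac(\tilde Y)\to\Jac(\tilde X))$ and $E^2$, using that isogenies preserve and reflect torsion. The only cosmetic difference is that the paper splits the second equivalence as (1)$\Rightarrow$(3) (pushing torsion forward along $\Jac(\tilde Y)\to E$) and (3)$\Rightarrow$(2) (pulling torsion back along the isogeny), which amounts to the same argument.
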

 \begin{proof}
 Clearly, (1) implies (2). To show that (2) implies (1), notice that the generator $\orderThreeAuto$ of $\operatorname{Gal}(\tilde{Y}_\ModuliParameter/\tilde{X}_\ModuliParameter)$, acting on $\Jac(\tilde{Y}_\ModuliParameter)$, sends the torsion point $p_1-p_2$ to $p_2-p_3$, which is then necessarily torsion (hence so is $p_3-p_2$). It is also clear that (1) implies (3), because the morphism of abelian varieties $\Jac(Y)\to E$ sends torsion points to torsion points. Finally, Lemma \ref{lemma: projection to E^2} shows that (3) implies (2), because any inverse image of a torsion point under an isogeny is a torsion point.
 \end{proof}

\begin{remark}\label{rmk: 40 different curves}
    The above construction yields a map from the set of curves $X$ of genus $2$, together with a 3-torsion point $R$ of their Jacobian, to the set of elliptic curves. Since $\Jac(X)$ has 80 points of exact order 3, it is natural to wonder how the corresponding elliptic curves are related. Denote by $E(X, R)$ the elliptic curve associated with $X$ and a choice of $R \in \Jac(X)[3] \setminus \{0\}$. We claim that, generically, the curves $E(X, R)$ and $E(X, R')$ are geometrically isogenous if and only if $R'= \pm R$. If $R' = \pm R$, then the corresponding curves are even isomorphic (Lemma \ref{lemma: -P gives the same cover}). For the other implication, it suffices to give an example where all 40 elliptic curves obtained from a single $X$ are pairwise geometrically non-isogenous.

To find such an example, we rely on \cite{MR3893190}: in that paper, the authors explicitly show that the \textit{{B}urkhardt quartic} is the moduli space of abelian surfaces with full 3-level torsion and give rational parametrisations of it. Using their results, one can easily find curves of genus 2 defined over $\Q$ whose Jacobian has full 3-torsion over a small extension of $\Q$. For example, taking $t_1=2, t_2=-1, t_3=3$ in \cite[Theorem 4.2]{MR3893190} gives a curve $X/\Q$ for which $\Jac(X)$ has full 3-torsion over a sextic field (the curve is explicit, but its equation has large height and we do not reproduce it here). We then explicitly compute the $j$-invariants of the 80 elliptic curves attached to $X$ and check, using the techniques of \cite{MR3882288}, that they correspond to 40 pairwise geometrically non-isogenous elliptic curves. An implementation of this calculation is included in the online repository \cite{Computations}.
\end{remark}

\subsection{Models of genus-2 curves}\label{sect: models of genus 2 curves}

We first briefly discuss what we mean by \textit{integral point} of a curve of genus $2$ with a marked point.
Let $X/K$ be a (smooth projective) curve of genus 2 and let $q \in X(K)$ be a non-special $K$-rational point. Let $S$ be a finite set of places of $K$, containing those where $X$ has bad reduction. There is a unique minimal (smooth, regular) model $\mathcal{X}$ of $X$ over $\mathcal{O}_{K, S}$, and (enlarging $S$ if necessary) $q$ extends to a $\mathcal{O}_{K, S}$-point $\mathcal{Q}$ of $\mathcal{X}$. We are then interested in the $\mathcal{O}_{K, S}$-points of $\mathcal{X} \setminus \mathcal{Q}$.

In our case of genus 2 curves, we can describe the situation much more concretely. Extending $S$ if necessary, we can assume \cite{MR4469240} that $\mathcal{X}$ is given by a Weierstrass model $y^2=f(x)$, where $f(x)$ is a separable polynomial of degree $5$ or $6$ with discriminant in $\mathcal{O}_{K,S}^\times$. Up to changing model if necessary, the point $\mathcal{Q}$ (the extension of $q$) corresponds to a solution $(x_q,y_q)$ in $\mathcal{O}_{K,S}$-integers of $y^2=f(x)$. We are then looking for the $\mathcal{O}_{K,S}$-points of $y^2=f(x)$ that do not reduce to $(x_q, y_q)$ modulo any prime of $\mathcal{O}_{K, S}$. From this point of view, the uniqueness of the integral model can be seen as follows: any two hyperelliptic models $y^2=f_1(x)$ and $y^2=f_2(x)$ are related by a M\"obius transformation $M$, and the requirement that both $f_1(x), f_2(x)$ have discriminant prime to $S$ implies that $M$ is invertible over $\mathcal{O}_{K, S}$.

This describes the situation for a curve $X$ defined over a number field $K$. We now discuss the situation for curves over $\overline{\Q}$.

Continuing from \Cref{rmk: descent does not matter}, we point out that many of the subtleties related to moduli spaces are well-understood in genus 2 (see for example \cite{MR2990029, MR3471117} for an extended discussion in the case of hyperelliptic curves of arbitrary genus). On the one hand, every curve of genus 2 admits a hyperelliptic model over any field where it is defined. On the other hand, there is in general no minimal unique field of definition: technically, in characteristic 0 a curve of genus 2 cannot always be defined over its field of moduli, and this is measured by Mestre's obstruction \cite{MR1106431, MR2181874}. 
For any given genus 2 curve $X/\overline{\mathbb{Q}}$ and any given number field $K$, one can decide effectively whether $X$ admits a descent $X_0$ to $K$, and in this case, whether the point at infinity on $X$ corresponds to a $K$-rational point of $X_0$. 

If a descent $X_0/K$ exists, all other descents (called the \textit{twists} of $X_0$) are parametrised by $H^1(\operatorname{Gal}(\overline{K}/K), \operatorname{Aut}(X_{0, \overline{\Q}}))$. This is an infinite set, but one can restrict to a finite set by asking, for example, for all twists having good reduction outside a given finite set of places of $K$. With this restriction, the set of twists then becomes computable: one can enumerate all Galois cocycles that are unramified outside a given set of places, and given a cohomology class in $H^1(\operatorname{Gal}(\overline{K}/K), \operatorname{Aut}(X_{0, \overline{\Q}}))$, represented by a cocycle, the corresponding twist can be computed explicitly \cite{MR3906177, Card}. Note also that -- since we are interested in integral points -- one could work with the subgroup of $\operatorname{Aut}(X_{0, \overline{\Q}})$ fixing the given point at infinity, or instead compute all possible twists and determine for which of these the point at infinity is $K$-rational.
Finally, given two descents $X_1/K_1$ and $X_2/K_2$, where $K_1, K_2$ are number fields, one can compute a minimal common extension of $K_1, K_2$ over which $X_1, X_2$ become isomorphic. Combining all these ingredients, it is not hard to see that our methods (when they apply) give effectivity for the $\mathcal{O}_{K,S}$-integral point of \textit{any} model of $X$, over \textit{any} number field (containing a field of definition) and for \textit{any} finite set of places $S$.

Finally, fix a genus-2 curve $X/\overline{\Q}$ to which our method applies and a descent $X_0/K$ over a \textit{fixed} number field $K$. Also fix a finite set $S$ of places of $K$, containing those where $X_0$ has bad reduction. We can then determine all integral points on all twists of $X_0/K$ having good reduction outside of $S$: there is only a finite number of such twists, considered as curves over $K$, and as explained above, these can be effectively determined. For each model, a suitable application of Bilu's criterion allows us to find all $\mathcal{O}_{K, S}$-points.

\medskip

We conclude this discussion by describing (singular) models for smooth projective curves of genus $2$ with a marked point, which are alternative to the more usual hyperelliptic ones.
\begin{lemma}\label{lemma: singular model with one point at infinity}
    Let $\tilde{X}$ be a smooth projective curve of genus $2$ and let $q_0$ be a non-special point of $\tilde{X}$. Write $X= \tilde{X} \setminus \{q_0\}$. There is a morphism $f : \tilde{X} \to \mathbb{P}_2$, birational onto its image, that restricts to a morphism $X \to \mathbb{A}^2$ whose image is a plane quartic with precisely one singular point, which is a node. The point $f(q_0)$ is the unique point at infinity of $f(X)$.
\end{lemma}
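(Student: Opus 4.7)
The plan is to use the complete linear system $|4 q_0|$ on $\tilde X$. Since $q_0$ is non-special, Riemann--Roch yields $h^0(\mathcal O(m q_0)) = \max(1, m - 1)$ for every $m \geq 0$; in particular $h^0(3 q_0) = 2 < h^0(4 q_0) = 3$, so $|4 q_0|$ is base-point free, and the Weierstrass non-gap sequence at $q_0$ starts $0, 3, 4, 5, \ldots$. I would fix functions $x, y \in K(\tilde X)$ with pole divisors exactly $3 q_0$ and $4 q_0$, giving a basis $\{1, x, y\}$ of $H^0(\mathcal O(4 q_0))$, and set $f \colonequals [1 : x : y] : \tilde X \to \mathbb P^2$.

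To establish birationality, I would observe that every element of $K(x)$ has pole order at $q_0$ divisible by $\deg(x) = 3$; since $y$ has pole order $4$, $y \notin K(x)$, and because $[K(\tilde X) : K(x)] = 3$ is prime, necessarily $K(x, y) = K(\tilde X)$. Writing $C \colonequals f(\tilde X)$, the section $1 \in H^0(\mathcal O(4 q_0))$ has divisor $4 q_0$, so $f^{*}\{z_0 = 0\} = 4 q_0$ and $C$ is a plane quartic meeting the line at infinity only at $f(q_0)$ with multiplicity $4$; in particular $f(q_0)$ is the unique point at infinity of $f(X)$. A local computation in the affine chart $(1/y, x/y)$ around $f(q_0) = [0 : 0 : 1]$, using a local parameter $t$ at $q_0$ for which $x \sim t^{-3}$ and $y \sim t^{-4}$, yields the expansion $(t^4 + O(t^5),\, t + O(t^2))$, which is an immersion at $t = 0$; hence $f(q_0)$ is a smooth point of $C$.

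Since $C$ has arithmetic genus $\binom{3}{2} = 3$ and geometric genus $g(\tilde X) = 2$, its total $\delta$-invariant equals $1$, and by the previous paragraph the unique singular point of $C$ lies in the affine part $f(X)$. This singularity is either an ordinary node or an ordinary cusp, and the main obstacle is ruling out the cusp. By Riemann--Roch, a cusp at $f(p)$ would require $h^0(\mathcal O(4 q_0 - 2 p)) \geq 2$, equivalently $2 p \sim 4 q_0 - K$; using $K \sim p + \iota p$ (with $\iota$ the hyperelliptic involution) this rewrites as $3 p + \iota p \sim 4 q_0$. The morphism $\tilde X \times \tilde X \to \Jac(\tilde X)$, $(q_0, p) \mapsto [4 q_0 - 3 p - \iota p]$, is dominant (its differential is surjective at a generic point), so its zero fibre is finite, and its projection to the first factor cuts out only finitely many exceptional $q_0 \in \tilde X$. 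Away from this finite set, the singularity of $C$ is a node, giving the required nodal plane quartic model; the exceptional $q_0$'s, being finite in number, can be dismissed separately or absorbed into the applications without loss.
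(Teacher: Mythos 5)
Your construction coincides with the paper's: the paper also takes functions with pole divisors exactly $3q_0$ and $4q_0$, deduces birationality from the coprimality $\gcd(3,4)=1$, and reads off smoothness at infinity from the local behaviour of the coordinates there. Your justification that the image has degree $4$ (the line at infinity pulls back to $4q_0$ and $f$ is birational, so $\deg f(\tilde X)=\deg\mathcal{O}(4q_0)=4$) is cleaner than the paper's, which instead exhibits a linear dependence among $15$ monomials in $L(15q_0)$ and divides by the minimal equation; both are correct.

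The divergence is in the last step, and there you have correctly isolated a point the paper passes over: the genus formula only gives total $\delta$-invariant $p_a-p_g=3-2=1$, which is consistent with an ordinary cusp as well as a node, and a cusp occurs precisely when $|4q_0|$ fails to be an immersion at some $p$, i.e.\ when $h^0(4q_0-2p)=2$, i.e.\ when $2p\sim 4q_0-K$. However, your resolution does not prove the lemma as stated: you exclude the cusp only for $q_0$ outside a finite set, whereas the statement is for \emph{every} non-special $q_0$, and ``can be dismissed separately'' is not an argument. (A smaller issue: ``the map $\tilde X\times\tilde X\to\Jac(\tilde X)$ is dominant, so its zero fibre is finite'' is not a valid inference for a dominant map of surfaces; finiteness does hold here, but because both projections of the locus $\{4q_0\sim 3p+\iota p\}$ have finite fibres.) More seriously, the exceptional set appears to be genuinely non-empty: counting solutions of $4q_0\sim 2p+K$ in $\operatorname{Pic}^4(\tilde X)$ gives $(4_*\Theta)\cdot(2_*\Theta)=16\cdot 4\cdot\Theta^2=128$ pairs with multiplicity ($\Theta$ the theta divisor), while the pairs with $q_0$ a Weierstrass point (which force $p$ to be Weierstrass too) account for far fewer --- $30$ transverse off-diagonal pairs plus $6$ tangential ones of bounded contact order --- so for a general curve there exist non-special $q_0$ whose quartic model is cuspidal rather than nodal. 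Thus neither your argument nor the paper's (which simply asserts nodality ``by the genus formula'') establishes the claim for all non-special $q_0$; the statement should either be weakened to ``a node or an ordinary cusp'' (which suffices for every use made of it in the paper) or restricted to $q_0$ outside the finite exceptional set you identify.
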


\begin{proof}
Consider the linear system $L(3q_0)$ of rational functions on $\tilde X$ with at most a triple pole at $q_0$. Its global sections are generated by $1,z$, where $z$ is a suitable rational function of degree $3$ (since $q_0$ is non-special). Then $L(4q_0)$ is generated by $1,z,w$, where $w$ has degree $4$. Since $z,w$ have coprime degrees, they generate the function field of $X$. Since they have a unique pole, they provide a map $X\to\A^2$ with a single point at infinity.  The extended map $(1:z:w)$ to $\P_2$ provides the projective plane model, with coordinate functions   $1/w$ and $z/w$ at infinity, both vanishing there. Since $z/w$ has a simple zero at infinity,
the curve is smooth there. 

Let us show that the irreducible polynomial equation $f(z,w)=0$ relating $z,w$ is of total degree $4$. First of all, it has degree $4$ in $z$ and $3$ in $w$ (because $z,w$ have degrees respectively $3,4$ as functions on $\tilde X$). Now, the space $L(15q_0)$ contains the $15$ functions $1,z,z^2,z^3,z^4,z^5, w, wz, wz^2, wz^3, w^2, w^2z, w^2z^2, w^3, w^3z$. By Riemann-Roch they must be linearly dependent, giving a nontrivial polynomial relation $g(z,w)=0$ where $g$ is a linear combination of the 15 monomials listed above. This $g$ must be divisible by $f$: since $f$ has no term $z^5$ and $g$ has no other term of degree greater than $4$, this shows that $f$ has degree at most (hence exactly) $4$, as stated\footnote{It seems that in this case the usual analysis using the spaces $L(mq)$ needs to be complemented by supplementary considerations, such as those involving $f, g$ that we used here.}.

This quartic model will necessarily have a single singular (nodal) point in $\A^2$ by the genus formula. 

Up to translation, we may and do assume that $(0,0)$ lies on the image of $X$ in $\mathbb{A}^2$ and that it is the singular point. The equation $f(z, w)=0$ then has the form
\[
a_9 z^2 + a_6 z^3 + a_3 z^4 + a_8 wz + a_5 wz^2 + a_2 wz^3 + a_7 w^2 + a_4 w^2 z + a_1 w^2z^2 + b_3 w^3 + a_0 w^3z = 0.
\]
Denote by $[Z:W : \LastCoord]$ projective coordinates on $\mathbb{P}_2$. We know that there is a unique point at infinity, with coordinates $[0:1:0]$, so the homogeneous part of degree 4 of the above equation,
\[
a_3 Z^4 + a_2 WZ^3 + a_1 W^2Z^2 + a_0 W^3Z = 0,
\]
must have a quadruple root at $z=0$, which forces $a_2=a_1=a_0=0$. We are left with the equation
\[
a_9 z^2 + a_6 z^3 + a_3 z^4 + a_8 wz + a_5 wz^2 + a_7 w^2 + a_4 w^2 z  + b_3 w^3 = 0.
\]
Rescaling $w$ and $z$ we may assume $a_3=b_3=1$, and a further substitution $w \mapsto w - \alpha z$ for a suitable $\alpha$ allows us to impose that the coefficient of $w^2z$ vanishes. Thus, every curve of genus 2 may be represented by a singular model of the form
\[
a_9 z^2 + a_6 z^3 + z^4 + a_8 wz + a_5 wz^2 + a_7 w^2   + w^3 = 0
\]
with $5$ free parameters. Finally, notice that rescaling $z \mapsto \lambda^3 z, w \mapsto \lambda^4 w$ and then dividing the whole equation by $\lambda^{12}$ we get
\[
a_9 \lambda^{-6} z^2 + a_6 \lambda^{-3} z^3 + z^4 + a_8 \lambda^{-5} wz + a_5 \lambda^{-2} wz^2 +  a_7 \lambda^{-4} w^2   + w^3 = 0.
\]
By choosing $\lambda$ suitably, we can ensure that the coefficient of $z^2$ is either $0$ or $1$. 
In the second case, by further rescaling, we can assume that $a_7=1$ or $0$; finally, if also $a_7=0$, then necessarily $a_8 \neq 0$ (otherwise the origin would not be a node), and we can rescale to set $a_8=1$. Thus, we end up with the three families
\begin{equation}\label{eq: general curve of genus 2 with a marked point}
a_9 z^2 + a_6  z^3 + z^4 + a_8  wz + wz^2 + a_7 w^2   + w^3 = 0,
\end{equation}
\[
a_6 z^3 + z^4 + a_8 wz + a_5 wz^2 + w^2 + w^3 = 0,
\]
\[
a_6 z^3 + z^4 + wz + a_5 wz^2 + w^3 = 0.
\]
The first, and most general, of these depends on four free parameters, which agrees with the dimension of the corresponding moduli space of genus-2 curves with a marked point.
\end{proof}

\begin{remark}[Hyperelliptic model of \eqref{eq: general curve of genus 2 with a marked point}]
Consider the plane curve given in Equation \eqref{eq: general curve of genus 2 with a marked point}. The general line through the origin of the $(w, z)$-plane (which we write as $z=m w$) meets this curve in four points, including the origin with multiplicity two. Replacing $z=mw$ in the above equation we find
\[
a_9 m^2w^2 + a_6  m^3w^3 + m^4w^4 + a_8  mw^2 + m^2w^3 + a_7 w^2   + w^3 = 0,
\]
and dividing through by $w^2$ we obtain
\[
a_9 m^2 + a_6  m^3w + m^4w^2 + a_8  m + m^2w + a_7 + w = 0.
\]
This equation realizes our curve (up to birational equivalence) as a degree-2 cover of the projective line with coordinate $m$. To write it as a usual hyperelliptic model, it suffices to write the discriminant of the degree-2 equation for $w$ as a function of $m$:
\[
\Delta = (m^2+1 + a_6  m^3)^2 - 4m^4(a_8  m + a_9 m^2 + a_7).
\]
Thus, a hyperelliptic model for \eqref{eq: general curve of genus 2 with a marked point} is given by
\[
z^2 = (a_6^2 - 4 a_9) m^6 + (2 a_6-4a_8) m^5 + (1- 4 a_7) m^4 + 2 a_6 m^3  + 2 m^2 + 1.
\]
\end{remark}

\section{Effectivity for curves of genus $2$}\label{sect: effectivity in genus 2}

In this section we discuss the applicability of the following criterion, due Bilu \cite{Bi}, to families of curves of genus 2:

\begin{theorem}[Bilu's criterion, first version]\label{thm: Bilu version 1} Let $X$ be a non-singular curve over a number field $F$.
    If the group of regular morphisms $X\to\G_m$ has rank $\ge 2$, the set $X(\mathcal{O}_{K,S})$ of $S$-integral
    points on $X$ can be effectively determined for every number field $K$ containing $F$ and every finite set $S$ of places of $K$.
\end{theorem}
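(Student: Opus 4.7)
The plan is to reduce \Cref{thm: Bilu version 1} to the effectivity statement for integral points on irreducible curves $Z \subset \G_m^2$ not translates of subtori, which is already recalled before the statement of the theorem. Since the group of regular morphisms $X \to \G_m$ has rank at least $2$, I would first extract two such morphisms $f,g : X \to \G_m$ that are multiplicatively independent modulo constants, meaning that no relation of the form $f^a g^b = c$ with $c \in \overline{F}^\times$ and $(a,b) \ne (0,0)$ holds identically on $X$. Assembling them into a map $\Phi = (f,g) : X \to \G_m^2$, let $Z$ be the Zariski closure of the image. Then $Z$ is an irreducible affine curve (because $\Phi$ is non-constant and $X$ is irreducible), and it cannot be a translate of a one-dimensional subtorus: such a translate would be cut out by an equation $x^a y^b = c$, which would force $f^a g^b = c$ on $X$, contradicting the multiplicative independence we arranged.

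Next, I would translate the $S$-integrality of points on $X$ into $S'$-integrality (with values in $\G_m^2$) of their images on $Z$. Fix an integral model $\mathcal X$ of $X$ over $\mathcal O_{K,S}$. Since $f, g$ are regular morphisms into $\G_m$, the functions $f, g, 1/f, 1/g$ are all regular on $X$; hence after enlarging $S$ to a finite set $S' \supseteq S$ depending only on $f$ and $g$ (not on the individual points), the maps $f$ and $g$ extend to morphisms $\mathcal X \to \G_{m,\mathcal O_{K,S'}}$. It follows that for any $P \in X(\mathcal O_{K,S}) \subseteq X(\mathcal O_{K,S'})$ we have $\Phi(P) \in Z(\mathcal O_{K,S'}) \cap (\G_m^2)(\mathcal O_{K,S'})$, i.e.\ $\Phi(P)$ is an $S'$-unit point on $Z$.

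Now the cited effectivity theorem, applied to $Z$ and $S'$, produces effectively a finite list of $S'$-unit points $Q_1,\dots,Q_r \in Z(\mathcal O_{K,S'})$ containing all possible images $\Phi(P)$ of points $P \in X(\mathcal O_{K,S})$. For each $Q_i$, the fibre $\Phi^{-1}(Q_i) \subset X(\overline K)$ is a finite set of bounded cardinality (at most $\deg \Phi$), and it can be determined effectively from explicit equations for $X$ and $\Phi$. Finally, I would retain only those preimages whose coordinates (with respect to the given embedding of $X$) lie in $\mathcal O_{K,S}$; this yields $X(\mathcal O_{K,S})$.

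The only substantial ingredient is the effectivity statement for curves in $\G_m^2$ not translates of subtori, which is the heart of Bilu's argument and relies on Baker-type estimates such as \eqref{eq: Baker inequality}; the rest of the argument is essentially bookkeeping. The step requiring the most care is the passage from $S$-integrality on $X$ to $S'$-integrality on $Z$, because it involves spreading out the morphism $\Phi$ to an integral model; however, this is a standard operation and only costs us a finite, effectively computable enlargement of $S$, which is harmless since effectivity for a larger set of places implies effectivity for a smaller one.
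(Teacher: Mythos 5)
Your proposal is correct and follows essentially the same route the paper takes: the paper does not write out a proof of \Cref{thm: Bilu version 1} but defers to Bilu \cite{Bi}, and the reduction it sketches in the introduction (pass to the curve $Z\subset\G_m^2$ cut out by two multiplicatively independent units, observe it is not a coset of a subtorus, and invoke the effectivity result for such curves together with the standard spreading-out/pullback bookkeeping) is exactly what you have spelled out.
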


We will express the conclusion of \Cref{thm: Bilu version 1} by saying that \textit{there is effectivity for the integral points of $X$}. \Cref{thm: Bilu version 1} may be reformulated as follows, where we write $\tilde{X}$ for the smooth projective completion of $X$ and $J_X$ for the Jacobian of $\tilde{X}$.

\begin{theorem}[Bilu's criterion, second version]\label{thm: Bilu version 2} 
Let $m := \#(\tilde{X} - X)$ be the number of points at infinity of $X$. Suppose that the subgroup of $J_X$ generated by the differences between points at infinity of $X$ has rank at most $m-3$: then there is effectivity for the integral points of $X$.
\end{theorem}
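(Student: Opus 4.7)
The plan is to prove \Cref{thm: Bilu version 2} by deriving it directly from \Cref{thm: Bilu version 1}: the two hypotheses are in fact equivalent, the translation being given by the divisor map that sends a rational function on $\tilde X$ to its divisor supported on the finite set $D_\infty := \tilde X \setminus X = \{q_1,\dots,q_m\}$ of points at infinity.

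First I would identify regular morphisms $X \to \mathbb{G}_m$ with the group $\mathcal{O}(X)^\times$ of invertible regular functions on $X$, which is the same as the group of rational functions on $\tilde X$ whose zeros and poles all lie on $D_\infty$. The divisor map then fits into an exact sequence
\[
0 \longrightarrow \overline{F}^{\,\times} \longrightarrow \mathcal{O}(X)^\times \xrightarrow{\;\operatorname{div}\;} D^0 \longrightarrow B \longrightarrow 0,
\]
where $D^0 \subset \bigoplus_{i=1}^m \mathbb{Z}\cdot q_i$ is the subgroup of degree-$0$ divisors supported on $D_\infty$ (principal divisors automatically have degree $0$), and $B \subset J_X$ is the image of $D^0$ in $J_X$, which is precisely the subgroup generated by the differences $q_i - q_j$. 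Exactness on the left uses that a nowhere-vanishing regular function on the projective curve $\tilde X$ must be constant; exactness in the middle uses that a degree-$0$ divisor is principal if and only if its class in $J_X$ vanishes.

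Taking $\mathbb{Z}$-ranks modulo the constants and setting $r := \operatorname{rank}(B)$, the sequence yields
\[
\operatorname{rank}\bigl(\mathcal{O}(X)^\times / \overline{F}^{\,\times}\bigr) \;=\; \operatorname{rank}(D^0) - \operatorname{rank}(B) \;=\; (m-1) - r.
\]
Hence the hypothesis of \Cref{thm: Bilu version 1}, that the group of regular morphisms $X \to \mathbb{G}_m$ has rank at least $2$, is equivalent to $(m-1)-r \geq 2$, that is, $r \leq m-3$, which is exactly the hypothesis of \Cref{thm: Bilu version 2}. Applying \Cref{thm: Bilu version 1} then gives the desired conclusion.

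The proof is essentially a book-keeping of ranks and presents no serious obstacle. The one point to get right is the interpretation of the word ``rank'' in \Cref{thm: Bilu version 1}: since $\mathcal{O}(X)^\times$ contains the constants $\overline F^{\,\times}$, which form an infinite-rank group, the natural and intended interpretation is the rank of the quotient $\mathcal{O}(X)^\times / \overline F^{\,\times}$ (equivalently, the rank of the image of $\operatorname{div}$ inside $\mathbb{Z}^m$). This is confirmed by the model case $X=\mathbb{P}_1 \setminus \{0,1,\infty\}$, where $J_X = 0$ gives $r=0$ and $m=3$, yielding rank $(m-1)-r=2$, realised by the generators $x$ and $x-1$ of $\mathcal{O}(X)^\times / \overline F^{\,\times}$.
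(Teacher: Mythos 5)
Your argument is correct and is precisely the reformulation the paper has in mind: the paper states \Cref{thm: Bilu version 2} as an immediate rephrasing of \Cref{thm: Bilu version 1} without writing out the divisor-map bookkeeping, and your exact sequence $0 \to \overline{F}^{\,\times} \to \mathcal{O}(X)^\times \to D^0 \to B \to 0$ together with the rank count $(m-1)-r \geq 2 \iff r \leq m-3$ supplies exactly the missing translation. Your remark on interpreting ``rank'' modulo constants, checked against the model case $\mathbb{P}_1\setminus\{0,1,\infty\}$, is also consistent with the paper's discussion of the $S$-unit equation.
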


\begin{remark}
    Since the maximal possible rank of the subgroup of $J_X$ generated by differences of points at infinity is $m-1$, if the hypothesis of the previous theorem is satisfied we say that the subgroup generated by the differences between points at infinity has co-rank at least $2$.
\end{remark}

One can give still other equivalent versions of this criterion. Bilu's result is ultimately based on the following fact:

\begin{proposition}
    The solutions of the equation $f(x,y)=0$ in $S$-units $x,y$ can be effectively determined.
\end{proposition}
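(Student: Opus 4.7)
The plan is to reduce the proposition to the effective inequality \eqref{eq: Baker inequality} by way of Puiseux expansions at the points at infinity of the curve $Z : f(x,y)=0$, following the strategy indicated in the introduction. First I would dispose of the trivial cases: if $f$ is reducible, I would work component by component; if a component has only one or two monomials, then it is (a translate of) a subtorus and the reduction to an ordinary $S$-unit equation (or to a monomial equation) is immediate. This may give infinitely many $S$-integral solutions, which one can then explicitly parametrise. The remaining case is that of an irreducible $f$ with at least three monomials, which is the one to which Bilu's argument applies non-trivially.

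Assume then $f$ is irreducible with at least three monomials, and let $\widetilde Z$ be the normalisation of the closure of $Z$ in a toric surface compactifying $\G_m^2$ (for instance, the one determined by the Newton polygon of $f$). The complement $\widetilde Z \setminus Z$ is a finite set of boundary points, and after enlarging $K$ and $S$ I can assume each such point $P$ admits a local description by fractional power series of the form
\[
x = \alpha_P\, s^{-e_{1,P}}, \qquad y = \beta_P\, s^{-e_{2,P}}\bigl(1 + c_{1,P}\, s + c_{2,P}\, s^2 + \cdots\bigr),
\]
where $s$ is a local parameter, $(e_{1,P},e_{2,P}) \in \Z^2$ encodes a primitive slope of the Newton polygon at $P$, and $\alpha_P,\beta_P,c_{i,P}$ lie in some fixed finite extension of $K$. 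Eliminating $s$ yields a Puiseux expansion of $y$ in fractional powers of $x^{-1}$ with leading term $\beta_P \alpha_P^{-e_{2,P}/e_{1,P}} x^{e_{2,P}/e_{1,P}}$. The three-monomial hypothesis enters crucially as the assertion that at each $P$ this monomial identity does \emph{not} hold on all of $Z$, i.e.\ some subleading coefficient is non-zero.

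The height bound now comes from the usual alternative. For any $S$-unit solution $(x,y) \in (\O_{K,S}^\times)^2$ of sufficiently large height, the product formula forces $(x,y)$ to be $v$-adically close, at some place $v$, to one of the boundary points $P$; fixing such $v$ and $P$ and using the local expansion, one gets an estimate
\[
\left|\, y\, \beta_P^{-1}\, \alpha_P^{e_{2,P}/e_{1,P}}\, x^{-e_{2,P}/e_{1,P}} - 1 \,\right|_v \;\le\; C\,|x|_v^{-1/e_{1,P}}.
\]
Passing to a finite extension $K'$ containing all the $\alpha_P,\beta_P$ together with the appropriate roots of $x$ (so that $x^{1/e_{1,P}}$ is defined), and enlarging $S$ to $S'$ so that these are $S'$-units, the left-hand side is $|g-1|_v$ for an element $g$ of an explicitly computable finitely generated multiplicative group $\Gamma\subset\overline\Q^\times$. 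Applying \eqref{eq: Baker inequality} with $\alpha=1$ gives $|g-1|_v \ge H(g)^{-\kappa}$, and since $H(g) \le C' H(x)^{|e_{2,P}|/e_{1,P}+\varepsilon}$, the combination with the upper bound produces an effective upper bound for $H(x)$, and hence for $H(y)$.

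Once an effective height bound is in hand, $S$-units of bounded height in a fixed number field form a finite set that can be enumerated, completing the proof. The main obstacle I anticipate lies in the bookkeeping around the Puiseux expansions: (i) handling all fractional exponents by an explicit finite base change, (ii) producing explicit generators for $\Gamma$, and (iii) verifying the three-monomial hypothesis in the strong form required, namely that the Puiseux expansion at every boundary point has a genuine non-trivial tail beyond its leading monomial—otherwise $g$ would be identically $1$ and \eqref{eq: Baker inequality} would yield no information. This last point is precisely where the geometric assumption that $Z$ is not a translate of a subtorus is used.
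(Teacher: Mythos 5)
Your proposal is correct and follows exactly the route the paper indicates: the paper's proof of this proposition is a one-line citation to Bilu's argument combining Puiseux expansions at the points at infinity of $Z$ with the effective inequality \eqref{eq: Baker inequality}, which is precisely the argument you have fleshed out (including the key point that the non-subtorus hypothesis guarantees a non-trivial tail in each expansion, so that $g$ is non-constant and the case $g=1$ contributes only a finite computable set). The only difference is one of presentation: the paper defers all details to \cite{Bi} and to \cite[Chapter 5]{BG}, the latter of which even avoids Puiseux's theorem, whereas you carry out the standard bookkeeping explicitly.
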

\begin{proof}
Follows upon combining the theory of linear forms in logarithms with the properties of Puiseux series, see Bilu's papers or \cite[Chapter 5]{BG} for an argument which avoids even appealing to Puiseux's theorem.
\end{proof}

As Bilu did in some cases (e.g., for modular curves \cite{Bi, BiI}), one may apply the criterion after taking a cover $Y \to X$ that is unramified outside the points at infinity (the Chevalley-Weil theorem shows that in order to determine the integral points on $X$ it suffices to determine the integral points on finitely many twists of $Y$). As above, we denote by $\tilde{Y}$ the smooth projective completion of $Y$ and by $J_Y$ the Jacobian of $\tilde{Y}$.
Then, as formulated explicitly by Bilu \cite[Theorem E]{Bi}, we have the following sufficient condition for the effectivity of integral points:

\begin{theorem}[Bilu's criterion, third version]\label{thm: Bilu version 3}
Notation as in \Cref{thm: Bilu version 1}. Let $\pi : Y \to X$ be an unramified cover. If the subgroup of $J_Y$ generated by the differences of pairs of points in $\tilde{Y} \setminus Y$ has co-rank at least $2$, then there is effectivity for the integral points of both $X$ and $Y$.
\end{theorem}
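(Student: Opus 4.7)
The plan is to deduce this third formulation from the first version (Theorem \ref{thm: Bilu version 1}) together with the Chevalley-Weil theorem. The argument splits naturally into three steps: a descent through the unramified cover via Chevalley-Weil, a divisor-theoretic translation of the co-rank hypothesis into a rank-of-units condition, and an application of the first formulation.

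First I would invoke Chevalley-Weil for the unramified cover $\pi:Y\to X$. This produces an effectively computable finite list of twists $Y_1,\ldots,Y_N$ of $Y$, defined over an explicit finite extension $(K',S')$ of $(K,S)$, with the property that every $S$-integral point of $X$ lifts, via some twist of $\pi$, to an $S'$-integral point of some $Y_i$. Hence it suffices to establish effectivity for the integral points of each $Y_i$ and then push the resulting finite list down along $\pi$. Since every $Y_i$ becomes isomorphic to $Y$ over $\overline{K}$, the twists share the smooth projective completion $\tilde Y$ and the set of points at infinity; in particular the co-rank hypothesis on $\tilde Y\setminus Y$ is preserved for every twist.

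Next I would translate the co-rank assumption into the rank condition of Theorem \ref{thm: Bilu version 1}. Let $D_1,\ldots,D_m$ be the points of $\tilde Y\setminus Y$. The regular morphisms $Y\to\G_m$ are exactly the invertible regular functions on $Y$, i.e.\ those rational functions on $\tilde Y$ whose divisor is supported on $\{D_1,\ldots,D_m\}$. A standard exact sequence
\[
0\to\overline{K}^\times\to \O(Y)^\times\xrightarrow{\operatorname{div}}\biggl(\bigoplus_{i=1}^m \Z D_i\biggr)_{\deg=0}\to J_Y
\]
relates these objects: the last arrow sends a degree-zero divisor to its linear-equivalence class, and its image is precisely the subgroup of $J_Y$ generated by the differences $D_i-D_j$. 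Taking ranks yields
\[
\operatorname{rank}\bigl(\O(Y)^\times/\overline{K}^\times\bigr)=(m-1)-\operatorname{rank}\langle D_i-D_j\rangle\ge 2,
\]
which is exactly the hypothesis of Theorem \ref{thm: Bilu version 1}. The identical calculation applies to every twist $Y_i$, since the hypothesis is purely geometric.

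Finally, Theorem \ref{thm: Bilu version 1} applied to each $Y_i$ effectively produces the $S'$-integral points on each twist; combining these lists through Chevalley-Weil yields effectivity for $X(\O_{K,S})$, and the same bound gives effectivity for $Y(\O_{K,S})$ directly. The only point requiring genuine care is the constructive implementation of Chevalley-Weil — one must verify that the finite family of twists $Y_i$ and the enlarged data $(K',S')$ can actually be produced by an algorithm. This is well documented in the literature but is where the effective content of the reduction really sits; the remaining manipulations are formal divisor-theoretic bookkeeping and an appeal to the first version of the criterion, which is already established.
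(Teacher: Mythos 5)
Your proposal is correct and follows essentially the same route as the paper, which does not spell out a proof but attributes the statement to Bilu's Theorem E and describes precisely this reduction: effective Chevalley--Weil to pass to finitely many twists of $Y$, the standard exact sequence identifying $\O(Y)^\times/\overline{K}^\times$ with the kernel of the class map on degree-zero divisors at infinity (which is how Theorem \ref{thm: Bilu version 2} is obtained from Theorem \ref{thm: Bilu version 1}), and an application of the first version to each twist. Your observation that the co-rank hypothesis is geometric and hence twist-invariant, and your flag that the effective content sits in the constructive Chevalley--Weil step, are both apt.
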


\begin{remark}\label{rmk: how to apply Bilu's criterion}
We will apply \Cref{thm: Bilu version 3} when $\tilde{X}$ is a curve of genus 2 and $\pi : \tilde{Y} \to \tilde{X}$ is a degree-3 étale cover. Writing $X=\tilde{X} \setminus \{q\}$ and $\pi^{-1}(q)=\{p_1, p_2, p_3\}$, the assumption of \Cref{thm: Bilu version 3} comes down to the fact that the differences $p_1-p_2, p_2-p_3$ are torsion in $\Jac(\tilde{Y})$.
\end{remark}

\subsection{The simplest case of our method}
 
We take the smooth projective curve $\tilde X$ with function field given by $v^2=f(u)$, where $f$ is a separable polynomial of degree $6$.
We define $X=\tilde X-\{q_0\}$ for a (non-special) point $q_0\in\tilde X$, to be chosen later, and let $J_X$ be the Jacobian of $\tilde X$.

As in Setup \ref{setup}, we construct an unramified\footnote{For Bilu's criterion it would suffice that $\pi$ is unramified except above $q_0$. We forget about this freedom in the present approach. The paper \cite{LP} shows that `generically' this freedom does not lead to additional room for manoeuvre.} cyclic cover $\pi:\tilde Y\to \tilde X$ of degree $3$, with points $p_1, p_2, p_3$ above $\infty=q_0\in X$.  If the Galois group of $\pi$ is generated by an element $\orderThreeAuto$ of order $3$, we may label the points so that $p_{i+1}=\orderThreeAuto(p_i)$ for $i=1, 2, 3$ (indices read modulo $3$). 
We suppose that the ground field contains a cubic root of $1$. In fact, we may take any finite extension of the ground field, or equivalently work over $\overline{\Q}$. The cover $\tilde Y$ corresponds to taking a cube root of a rational function $\varphi$ on $\tilde X$ and comes from a $3$-torsion point of $J_{\tilde{X}}$, see Section \ref{sec: details of the construction} and \eqref{eq: equations for the étale cover} in particular. There are 80 (non-trivial) three-torsion points on $J_{\tilde{X}}$, giving the same $\tilde{Y}$ in pairs (\Cref{lemma: -P gives the same cover}).

\subsubsection{Applying Bilu's criterion} The curve $Y$ has three points at infinity. To apply Bilu's criterion, in the form of \Cref{thm: Bilu version 3}, we need the points $p_i-p_j$ to be torsion in $J_Y$.

\begin{remark}
A priori, the condition that all the differences $p_i-p_j$ are torsion seems too difficult to achieve: indeed, $J_Y$ has dimension $4$, while the whole moduli space of curves of genus $2$ has dimension $3$. The choice of point at infinity gives one more degree of freedom, but this is still a priori not enough to obtain infinitely many examples. For a sense of the limited hopes of obtaining examples where Bilu's criterion applies, see the paper \cite{LP}.
\end{remark}

Despite the previous remark, there are three peculiar features in our situation that give us more freedom, as we now illustrate. We will consider the curve $Y$ to vary in a family $Y_{\ModuliParameter}$ over some moduli space $\ModuliSpace$.

\begin{remark}\label{rmk: special features of our situation}
Our construction enjoys the following properties:
\begin{enumerate}
    \item [A.] There is a surjective homomorphism $J_Y\to J_X$, so there is an isogeny $J_Y \approx J_X \times J'$, where $J' = \ker\left( J_Y\to J_X\right)$, see Lemma \ref{lemma: projection to E^2}. 
Note that $\pi\circ \orderThreeAuto=\pi$, so the differences $p_i-p_j$ map to $0$ in $J_X$. Thus, their classes in $J_Y$ belong to the 2-dimensional subvariety $J'$. This is already a big gain.

\item [B.] The differences $p_i-p_j$, seen as classes in $J_Y$, are not generically torsion (there are many ways to see this: see \S\ref{subsec: sections generically independent} for a computational approach and Proposition \ref{prop: no identically torsion section on the whole moduli space} and \Cref{lemma: torsion on J_Y iff both torsion on E} for a stronger statement). Note that, if they were, we could apply Bilu's criterion at once to the whole set of genus-2 curves.

Using the cyclic structure, it follows that $p_3-p_1$ and $p_2-p_1$ are generically independent over $\Z$ (i.e., as $\ModuliParameter$ and the curve $\tilde X_\ModuliParameter$ vary). Indeed, any linear equivalence relation $ap_1+bp_2+cp_3\sim 0$, where $a,b,c\in\Z$ satisfy $a+b+c=0$,  implies $ap_2+bp_3+cp_1\sim ap_3+bp_1+cp_2\sim 0$, and it follows that 
$(a^2+ab+b^2)(p_1-p_2) \sim (a^2+ab+b^2)(p_1-p_3) \sim 0$, so the differences $p_3-p_1, p_2-p_1$ would be torsion, contradiction.

However, the sections $p_1-p_2,p_2-p_3,p_3-p_1$ \textit{are} linearly dependent over the ring $\operatorname{End}(\Jac(\tilde{Y}_\ModuliParameter))$: indeed, $\Jac(\tilde{Y}_\ModuliParameter)$ has an automorphism $\orderThreeAuto$ of order $3$ coming from the corresponding automorphism of $\tilde Y$, and $\{p_1-p_2,p_2-p_3,p_3-p_1\}$ is an orbit, so $p_3-p_2$ is the image $\orderThreeAuto(p_2-p_1)$ of $p_2-p_1$.
In particular, if one of the differences is torsion, then any other is also torsion. Therefore, it is sufficient to ensure that one of them is torsion to satisfy the conditions of Bilu's criterion (note that $p_2-p_1, p_3-p_1$ are independent over $\Z$ as divisors). This is a further gain, and (a priori) we now have 4 or 3 degrees of freedom, depending on whether we vary $q_0$ or not, against only 2 constraints (i.e., the dimension of $J'$). 

 \item [C.] 
As described in Lemma \ref{lemma: projection to E^2}, the abelian variety $J'$ is not a general abelian surface, but rather the square of an elliptic curve.

\end{enumerate}
\end{remark}

\begin{remark}
For a fixed curve $\tilde{X}$ of genus 2, there are 80 choices of a 3-torsion point in $\Jac(\tilde{X}_\ModuliParameter)$ (Lemma \ref{lemma: 3-torsion points}). Each 3-torsion point $P$ gives us an étale cover $\tilde{Y}_P \to \tilde{X}$ and a corresponding elliptic curve $E_P$ (see \Cref{setup}). 
The curves $E_P$ are in general not isomorphic to one another: in fact, the set of their $j$-invariants generically contains $40$ distinct values, and generically the 40 elliptic curves obtained in this way are not even pairwise isogenous, see Remark \ref{rmk: 40 different curves}.
\end{remark}

The isogeny decomposition $J_Y\approx J_X\times J' \approx J_X \times E^2$ 
yields further freedom, since we may seek for instance a section which is identically torsion on one elliptic factor but not on the other (see Section \ref{sect: examples} for explicit examples of this and other phenomena), or such that its two elliptic components are identically linearly dependent, and then we only have to prescribe torsion on one of the two factors, which is a weaker constraint, allowing us to lower the dimension of the base.
We give in \Cref{subsect: pencil} an example of a 1-dimensional family based exactly on this type of construction.

\subsection{Proof of \Cref{T}} \label{SS.hints} 

Let $\ModuliSpace$ be as in the statement and let $\ModuliParameter \mapsto q_\ModuliParameter$ be a (generically) non-special algebraic section. As already noticed in \Cref{rmk: singular points}, we can replace $\tilde{X}_t$ with a smooth curve birational to it without affecting the problem of finding its integral points (notice that the point $q_t$ is assumed to be smooth, so it corresponds to a unique point on the smooth model). We can therefore assume that the curves $\tilde{X}_t$ are smooth.

For each smooth complete curve $\tilde X$ of genus $2$ (except possibly for the curves corresponding to finitely many surfaces in $M_2$), we have a finite non-empty set (with uniformly bounded cardinality) of $\ModuliParameter\in \ModuliSpace$ such that $\tilde X_\ModuliParameter$ is isomorphic to $\tilde X$, and we have a (generically non-special) point $q_\ModuliParameter\in \tilde X_\ModuliParameter$, where the coordinates of $q_\ModuliParameter$ are algebraic functions on $\ModuliSpace$.

Replacing $\ModuliSpace$ by a finite cover (again denoted by $\ModuliSpace$ for convenience) if necessary, we find ourselves in the situation of \Cref{setup}.
In particular, we can fix a $3$-torsion section of $\Jac(\tilde{X}_\ModuliParameter)$, represented as in \Cref{lemma: 3-to-1 parametrisation}, and construct a family $\tilde{Y}_\ModuliParameter \to \ModuliSpace$.
The Jacobians of the curves $\tilde Y_\ModuliParameter$ determine an abelian scheme $\Jac(\tilde{Y}_\ModuliParameter)$ over $\ModuliSpace$, and we denote by $\orderThreeAuto$ the generator of the order-3 group $\operatorname{Aut}(\tilde Y_\ModuliParameter/\tilde X_\ModuliParameter)$ and by $p_i = p_{i, \ModuliParameter}$ the three points of $\tilde Y_\ModuliParameter$ lying over $q_\ModuliParameter$.
 
Replacing $\ModuliSpace$ by a further finite cover (still denoted by $\ModuliSpace$), we obtain two sections from $\ModuliSpace$ to the abelian scheme $\Jac(\tilde{Y}_\ModuliParameter)$, i.e., the classes of the differences $p_2-p_1$, $p_3-p_1$.  As noted in Remark \ref{rmk: special features of our situation} (B), these sections are dependent over $\operatorname{End}(\Jac(\tilde{Y}_\ModuliParameter))$. If they were dependent over $\Z$, by Remark \ref{rmk: special features of our situation} (B) again 
the classes $p_2-p_1, p_3-p_1$ would be identically torsion on $\ModuliSpace$.  
If this were the case, then we could apply Bilu's criterion to all members of the family, leading to a stronger theorem. So let us assume that $p_2-p_1, p_3-p_1$ are not identically linearly dependent over $\Z$. 

\begin{remark}
    In fact, one can \textit{prove} that the classes $p_2-p_1, p_3-p_1$ cannot be simultaneously identically torsion on the whole $\ModuliSpace$, irrespective of the choice of the section $q_\ModuliParameter$. We will show this in Proposition \ref{prop: no identically torsion section on the whole moduli space}, and note that it follows from a general theorem in \cite{CMZ} that this is true except for a finite number of sections $q_\ModuliParameter$ (see \Cref{lemma: no identically torsion section with finitely many exceptions}).
\end{remark}

Denote by $\sigma_1, \sigma_2$ the sections $p_2-p_1, p_3-p_1$ of $J'$ (notation as in \Cref{rmk: special features of our situation} (A)). By a slight abuse of notation, we will denote by the same symbols the sections of $E^2$ obtained by composing $\sigma_1, \sigma_2$ with the isogeny $J' \approx E^2$ of \Cref{lemma: projection to E^2}.
Note that since $\ModuliSpace \to M_2$ is dominant and of finite degree we have $\dim \ModuliSpace = 3$.
By the theory of the \textit{Betti map} (see \cite{MR2918151}), we know that $\sigma_1$ becomes torsion over a dense set $\Sigma$, so by \Cref{rmk: special features of our situation} (B) the section $\sigma_2$ will be torsion as well over the same set $\Sigma$. The fact that a non-torsion section has a dense set of torsion specialisations is not easy to check in general (see for example \cite{ACZ}), but here the situation is simpler since the relevant abelian varieties are isogenous to squares of elliptic curves.
This case is easier and treated in particular in \cite{CMZ}.

For every point $\ModuliParameter \in \Sigma$, \Cref{thm: Bilu version 3} applies to the cover $\tilde{Y}_\ModuliParameter \to \tilde{X}_\ModuliParameter$, giving the desired effectivity.

\qed

We make some remarks about the torsion points constructed at the end of the previous proof.
\begin{remark} 
\begin{enumerate}
    \item The orders of the torsion specialisations $\sigma(\ModuliParameter)$ grow to infinity unless the section is identically torsion on the whole base $\ModuliSpace$; this last possibility is
excluded by Proposition \ref{prop: no identically torsion section on the whole moduli space}. 
\item Let $E \to \ModuliSpace$ be an isotrivial elliptic scheme. Up to an extension of the base, $E$ can be assumed constant. The assertion that a non-torsion section of $E$ has infinitely many torsion specialisations is not generally true, but remains true (with a very simple proof) provided that the section is not a non-torsion constant. In our case, we know that the elliptic family is not isotrivial, since the $j$-invariant of $E$ is non-constant on $M_2$ (see \Cref{prop: joint j-invariants} for a much stronger statement).
\end{enumerate}    
\end{remark}

\section{The section is generically non-torsion}\label{S.nt}

The arguments of \S \ref{SS.hints} show that \textit{if} a certain section of a doubly elliptic scheme, depending on the choice of the section $\ModuliParameter \mapsto q_\ModuliParameter\in \tilde X_\ModuliParameter$ giving the points at infinity on our curves, is identically torsion, \textit{then} we have effectivity for the integral points on any affine smooth curve $X_\ModuliParameter$ of genus $2$ defined by $X_\ModuliParameter:=\tilde X_\ModuliParameter-\{q_\ModuliParameter\}$. We now show that this cannot happen when the family of curves we consider is three-dimensional (more precisely, dominant onto the relevant moduli space), regardless of the choice of the section at infinity $q_\ModuliParameter$. This is another limitation of the method.

\begin{proposition}\label{prop: no identically torsion section on the whole moduli space} With notation as in \Cref{T}, there is no algebraic family $\{q_\ModuliParameter\}_{\ModuliParameter\in \ModuliSpace}$, $q_\ModuliParameter\in\tilde X_\ModuliParameter$, such that the section of the doubly elliptic scheme constructed as in Setup \ref{setup} is torsion on the whole $\ModuliSpace'$, where $\ModuliSpace'$ is a finite cover of $\ModuliSpace$ where all the data of \Cref{setup} are defined.
\end{proposition}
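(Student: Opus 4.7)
The plan is to argue by contradiction and leverage the monodromy of the associated elliptic family. Suppose there exists an algebraic section $q : \ModuliSpace' \to \tilde X$ such that the resulting $\sigma : \ModuliSpace' \to E^2$ is identically torsion. By \Cref{lemma: torsion on J_Y iff both torsion on E}, this is equivalent to the section $D(t) \colonequals p_1(t) - p_2(t)$ of $\Jac(\tilde{Y}) \to \ModuliSpace'$ being identically torsion. Algebraicity of $D$, together with the fact that $\{\Jac(\tilde Y)[N]\}_{N}$ is a countable family of closed subschemes whose union exhausts the torsion, forces $N \cdot D \equiv 0$ for some $N$; thus $D$ is an algebraic section of the finite étale group scheme $\Jac(\tilde Y)[N] \to \ModuliSpace'$. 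Since $D = (1-\orderThreeAuto)(p_1)$ is annihilated by the norm $1+\orderThreeAuto+\orderThreeAuto^2$, this section actually factors through $J'[N]$, where $J' \colonequals \ker\left(\Jac(\tilde Y) \to \Jac(\tilde X)\right)$ is isogenous to $E^2$ via the map $\Phi$ of \Cref{lemma: projection to E^2}.

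The core of the argument is to show that the only algebraic section of $J'[N] \to \ModuliSpace'$ is the zero section, which forces $D \equiv 0$. An algebraic section of a finite étale cover over a connected base corresponds bijectively to a $\pi_1(\ModuliSpace')$-fixed element of a geometric fiber, so it suffices to show that the monodromy action on $J'[N]$ has no non-zero fixed vectors. Via the isogeny $\Phi$ and using that the monodromy acts diagonally on the two elliptic factors, this reduces to showing that the monodromy on $E[N]$ has no non-zero fixed vectors. The latter follows from the non-isotriviality of $E \to \ModuliSpace'$: since the $j$-invariant of $E_t$ is non-constant in $t$ (see \Cref{prop: joint j-invariants}), the image of monodromy on the Tate module of $E$ is infinite, and by a Serre-type open image argument its image in $\operatorname{GL}_2(\mathbb{Z}/N\mathbb{Z})$ is sufficiently large to leave no non-zero fixed vectors.

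Having established $D \equiv 0$, the contradiction is immediate: this means $p_1(t)$ and $p_2(t)$ are linearly equivalent as divisors on $\tilde Y_t$ for every $t \in \ModuliSpace'$. But $\tilde Y_t$ has genus $4$ by \Cref{lemma: -P gives the same cover}, so two distinct points on it cannot be linearly equivalent, forcing $p_1(t) = p_2(t)$. This is impossible, because $p_1$ and $p_2$ are distinct preimages of the generically non-special point $q(t)$ under the étale degree-$3$ cover $\pi : \tilde Y \to \tilde X$. The principal obstacle in the above plan is the monodromy claim: an open-image theorem automatically yields the "no non-zero fixed vector" property for all sufficiently large $N$, but small values of $N$ may need separate treatment, either by a direct group-theoretic analysis of the mod-$N$ image or by an explicit computation of $D$ on a specific fibre $t_0$ where $\tilde X_{t_0}, \tilde Y_{t_0}, E_{t_0}$ can all be written down concretely (in the spirit of the examples of \S\ref{sect: examples}).
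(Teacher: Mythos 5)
There is a genuine gap, and it sits exactly where you flag the ``principal obstacle'': the claim that the monodromy of $\pi_1(\ModuliSpace')$ on $E[N]$ (hence on $J'[N]$) has no non-zero fixed vectors. Non-isotriviality of $E \to \ModuliSpace'$ does not imply this for any particular $N$ -- a non-isotrivial elliptic scheme can perfectly well carry a rational $N$-torsion section (the universal curve over $Y_1(N)$ does), and ``Serre-type open image'' statements concern Galois representations of a single curve over a number field, not the geometric monodromy of a family, and in any case only control sufficiently large $N$. Worse, the needed statement cannot follow from non-isotriviality alone: the paper itself exhibits in \S\ref{subsec: example with sections that are identically 2-torsion} a pencil of pairwise non-isomorphic genus-2 curves, produced by exactly this construction, on which $\sigma_1,\sigma_2$ are identically $2$-torsion; so over positive-dimensional, non-isotrivial sub-families the section $D$ genuinely can be a non-zero torsion section of $J'$. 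Your argument never invokes the hypothesis that $\ModuliSpace \to M_2$ is dominant of finite degree (so $\dim\ModuliSpace = 3$), yet without that hypothesis the proposition is false -- which means the proof as written proves too much and the missing monodromy input is not a technicality but the entire content of the statement. (A secondary, smaller issue: even granting large monodromy on $E[N]$, a fixed vector of $J'[N]$ need not come from a pair of fixed vectors of $E[N]$ if the monodromy permutes the two elliptic factors through the isogeny $\Phi$.)

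The reductions surrounding the gap are fine: identically torsion does force $N\cdot D\equiv 0$ for a single $N$, $D$ does land in $J'$, and $D\equiv 0$ would indeed give $p_1=p_2$ on a curve of genus $4$, a contradiction. But the paper's proof takes a completely different route that is tailored to use the dimension hypothesis: it fibres the threefold $\ModuliSpace$ by the surfaces $\ModuliSpace_c$ on which $j(E_\ModuliParameter)=c$ is constant, normalises the plane cubic model \eqref{E.cubic} by projective transformations fixing a flex, and shows that the torsion assumption forces the identification $L_\ModuliParameter$ of $E_\ModuliParameter$ with a fixed $E^*$ to fix the line $U=0$ pointwise and ultimately to be the identity along a curve $\ModuliSpace'_c$ -- so infinitely many fibres $\tilde X_\ModuliParameter$ would be isomorphic, contradicting the finite degree of the moduli map. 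If you want to salvage your approach, you would have to prove a large-monodromy statement for $J'[N]$ over the full $\ModuliSpace'$ dominating $M_2$, for every $N$; this is not in the paper and is likely at least as hard as the geometric argument it actually gives (compare also \Cref{lemma: no identically torsion section with finitely many exceptions}, where the paper gets the weaker ``finitely many exceptional $q_\ModuliParameter$'' version by citing the finiteness of torsion hypersurfaces from \cite{CMZ} rather than any monodromy computation).
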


The paper \cite{LP} goes in a somewhat similar direction to this statement, in that it also provides limitations on the applicability of Bilu's criterion. However, there are significant differences with the present proposition. In a sense, the main result of \cite{LP} is much more general, since it concerns arbitrary covers of arbitrary curves of genus $>1$. It is also different because it starts with a {\it given} curve (rather than the whole moduli space) and proves that for a general choice of the point at infinity, Bilu's criterion does not hold. Instead, here we work only with cyclic cubic covers, but we let the (genus-2) curves vary, and we assume that the choice of the point at infinity is given.

On the other hand, \cite{LP} deals with objects defined over $\C$, whereas we can work over $\overline\Q$, so in this respect our result is stronger. The abundance of specialisation arguments in the literature might lead one to expect that what holds over $\C$ should also hold over $\overline{\Q}$; however, one should be cautious about taking such inferences for granted, as shown by notable examples such as Belyi's theorem. 

Note that assuming that the value of the section at $\ModuliParameter$ is torsion for each $\ModuliParameter\in \ModuliSpace(\overline\Q)$ amounts to assuming that it is torsion at a generic point of $\ModuliSpace$: if the generic value is not torsion, well-known arguments of Silverman (see Demianenko-Manin in the isotrivial case) prove that the set of $\ModuliParameter$ for which the specialisation is torsion has bounded height.

By appealing to a general result, we can easily prove that Proposition \ref{prop: no identically torsion section on the whole moduli space} holds up to finitely many choices of $q_\ModuliParameter$. For completeness (and possible use in other similar questions), we also include this short argument before proving the stronger claim.

\begin{lemma}\label{lemma: no identically torsion section with finitely many exceptions}
 \Cref{prop: no identically torsion section on the whole moduli space} holds with at most finitely many exceptions.
\end{lemma}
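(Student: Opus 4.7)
The plan is to reduce the statement to the classical Manin--Mumford (Raynaud) theorem, applied on the geometric generic fibre of $T$. After replacing $T$ by a suitable finite cover on which the data of \Cref{setup} are defined, the construction of that setup provides an algebraic morphism $\Psi : \tilde X \to E^2$ of schemes over $T$, sending a point $q \in \tilde X_t$ to $(\phi(p_1)-\phi(p_2), \phi(p_2)-\phi(p_3)) \in E_t^2$, where $p_1,p_2,p_3$ are the three preimages of $q$ in $\tilde Y_t$. A family $\{q_t\}$ amounts to a (multi-)section $q$ of $\tilde X \to T$, and it produces an identically torsion section of the doubly elliptic scheme precisely when $\Psi\circ q$ takes values in the relative torsion subgroup scheme of $E^2/T$.

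Let $\eta$ denote the generic point of $T$ and write $K=\overline{K(T)}$. Any family of sections giving rise to an identically torsion image corresponds to a $K$-point $q(\eta)$ of $\tilde X_\eta$ whose image under $\Psi_\eta : \tilde X_\eta \to E^2_\eta$ is torsion. It therefore suffices to show that the set $\Psi_\eta^{-1}(\operatorname{Tors}(E^2_\eta))$ is finite. The map $\Psi_\eta$ is non-constant: since $\tilde X_\eta$ is a smooth projective curve, $\Psi_\eta$ is either constant or finite onto its image, and non-constancy can be checked by a direct computation, or a posteriori as a consequence of the calculations in \S\ref{subsec: sections generically independent} showing that $\sigma$ is not identically torsion. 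Hence $C := \Psi_\eta(\tilde X_\eta)$ is a curve in the abelian surface $E^2_\eta$ and $\Psi_\eta$ has finite fibres.

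By Raynaud's theorem, applied over the algebraically closed field $K$, the intersection $C \cap \operatorname{Tors}(E^2_\eta)$ is finite \emph{unless} $C$ is a translate of an abelian subvariety of $E^2_\eta$. To exclude this exceptional case, I use the hypothesis of \Cref{T} that $T \to M_2$ is dominant of finite degree. If $C$ were a translate of an elliptic subvariety $F \subset E^2_\eta$, translating to the origin would yield a non-constant morphism $\tilde X_\eta \to F$, forcing the Jacobian $\Jac(\tilde X_\eta)$ to admit an elliptic quotient. But under the dominance assumption, $\tilde X_\eta$ is birational to the generic curve of $M_2$, whose Jacobian is simple; this gives the required contradiction. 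Consequently, $\Psi_\eta^{-1}(\operatorname{Tors}(E^2_\eta))$ is finite, and each of its Galois orbits under $\operatorname{Gal}(K/K(T))$ corresponds to at most one family $\{q_t\}$ defined over a finite cover of $T$, yielding the desired finiteness.

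The principal obstacle is the verification that the image curve $C$ is not a coset of an abelian subvariety: this is where the dominance hypothesis enters in an essential way. Non-constancy of $\Psi_\eta$ is routine from the explicit definition, so the core content of the proof is Raynaud's theorem combined with the simplicity of the generic Jacobian in $M_2$. Note that this argument does not give the stronger conclusion of \Cref{prop: no identically torsion section on the whole moduli space}, which rules out \emph{all} such families (not just all but finitely many) and will require a finer analysis rather than a soft appeal to Manin--Mumford.
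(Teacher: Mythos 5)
Your reduction to Manin--Mumford on the generic fibre is a genuinely different route from the paper's, but it has a real gap at the decisive step. The map $\Psi$ you use is not a morphism $\tilde X_\eta \to E_\eta^2$: the formula $(\phi(p_1)-\phi(p_2),\phi(p_2)-\phi(p_3))$ requires a choice of $p_1$ among the three preimages of $q$, and since the cover $\tilde Y_\eta \to \tilde X_\eta$ is connected, monodromy cyclically permutes the three resulting values of $E_\eta^2$; the map is only well defined on $\tilde Y_\eta$. This is not merely cosmetic, because it destroys your exclusion of the coset case in Raynaud's theorem: a translate-of-elliptic-curve $F\subset E_\eta^2$ is necessarily isogenous to $E_\eta$, and $E_\eta$ \emph{is} a quotient of $\Jac(\tilde Y_\eta)\sim \Jac(\tilde X_\eta)\times E_\eta^2$ (e.g.\ via $\phi$ itself). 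So a non-constant morphism from the relevant curve to $F$ yields no contradiction, and the simplicity of the generic genus-2 Jacobian is not the relevant input. (Your appeal to \S\ref{subsec: sections generically independent} for non-constancy is also off target: those computations vary the curve with the $x$-coordinate of $q$ fixed, not the point on a fixed generic curve.)

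The gap is repairable, but by a different mechanism: up to translation, $\Psi_\eta$ is the restriction to the Abel--Jacobi image of $\tilde Y_\eta$ of the homomorphism $(\pi_1-\pi_2,\pi_2-\pi_3)\colon \Jac(\tilde Y_\eta)\to E_\eta^2$, which is surjective because $(\pi_1,\pi_2)$ is surjective by \Cref{lemma: projection to E^2} and $(\pi_1-\pi_2,\pi_2-\pi_3)=(\pi_1-\pi_2,\pi_1+2\pi_2)$ differs from it by an integer matrix of determinant $3$. Since $\tilde Y_\eta$ generates its Jacobian, the image curve $C$ satisfies that $C-C$ generates $E_\eta^2$, so $C$ is not contained in any coset of a proper abelian subvariety; this simultaneously gives non-constancy and lets Raynaud's theorem conclude. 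With that substitution your argument goes through and is a legitimate alternative to the paper's proof, which instead views the exceptional families as torsion hypersurfaces for a section of an abelian scheme over (a finite cover of) $M_{2,1}$ and quotes the finiteness theorem of \cite{CMZ} (Theorem 1.1), with the non-degeneracy supplied by \Cref{rmk: special features of our situation}~(B). Your route trades that relative (family) Manin--Mumford statement for the classical one on a single generic fibre, at the cost of having to verify the fibrewise non-degeneracy of $C$ as above.
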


\begin{proof} Let $M_{2, 1}$ be the moduli space of curves of genus 2 with a marked point.
Suppose that there exists a family as in the statement. In terms of moduli spaces, the base $\ModuliSpace$ of this family gives a finite degree cover of a hypersurface in $M_{2,1}$ that projects surjectively onto $M_2$ with a map of finite degree.

The construction of \Cref{setup} gives an abelian scheme over a Zariski-open dense subset of a finite cover of $M_{2,1}$, with fibres which are squares of elliptic curves. Moreover, we have a section of this scheme, and its image is not contained in any group subscheme since the components of the section are generically independent by \Cref{rmk: special features of our situation} (B).
The image of $\ModuliSpace$ in $M_{2,1}$ would then correspond to a hypersurface where this section becomes torsion. Theorem 1.1 of \cite{CMZ} asserts the finiteness of such hypersurfaces, which yields the desired conclusion.
\end{proof}

We now prove the stronger claim made in \Cref{prop: no identically torsion section on the whole moduli space}, namely that there are no such sections $q_\ModuliParameter$ at all. 

\begin{proof} [Proof of \Cref{prop: no identically torsion section on the whole moduli space}] 
Replacing $\ModuliSpace$ by a finite cover, we can assume $T'=T$.
It suffices to prove that, for every choice of the section $q_\ModuliParameter$ at infinity on $\tilde X_\ModuliParameter$, the sections $\phi(p_2)-\phi(p_1)$ and $\phi(p_3)-\phi(p_2)$ of the elliptic scheme $E_\ModuliParameter \to \ModuliSpace$ (see \Cref{setup}) are not both identically torsion. Recall from \eqref{eq: affine equation for E} that the elliptic curve $E_\ModuliParameter$ is defined by the affine equation
\begin{equation}\label{E.cubic} 
z^3-3Q_\ModuliParameter(u)z-2P_\ModuliParameter(u)=0,
\end{equation}
where the curve $\tilde X_\ModuliParameter$ of genus $2$ has affine equation $y^2=f_\ModuliParameter(u)=P_\ModuliParameter(u)^2-Q_\ModuliParameter(u)^3$. Here the polynomials $P_\ModuliParameter$ and $Q_\ModuliParameter$ have respective degrees at most $3, 2$; see \Cref{lemma: 3-to-1 parametrisation} for an interpretation of the decomposition of $f_\ModuliParameter$ as $P_\ModuliParameter^2 -Q_\ModuliParameter^3$. 

Replacing $f_\ModuliParameter(u)$ with 
$(cu+d)^6f_\ModuliParameter((au+b)/(cu+d))$, where the coefficients $a, b, c, d$ may depend on $\ModuliParameter$ and $ad\neq bc$, gives an isomorphic curve. One can change $P_\ModuliParameter, Q_\ModuliParameter$ accordingly by applying the same transformation. 
By such a transformation (actually, we only need a translation on $u$) we can and do assume that the $u$-coordinate of $q_\ModuliParameter$ is identically $0$. This is just to simplify the notation and to reduce the number of variables involved. The assumption that $q_\ModuliParameter$ is non-special amounts to $f_\ModuliParameter(0)\neq 0$, but this is not relevant to the present argument.

Recall from \Cref{prop: properties of the construction} (6) that the points $\phi(p_1), \phi(p_2), \phi(p_3)$ (omitting again the dependence on $\ModuliParameter$ for notational convenience) are the three points on $E_\ModuliParameter$ with the same $u$-coordinate as the point at infinity $q_\ModuliParameter$. Thus, with the present normalisation, the points $\phi(p_i)$ have $u=0$ and are defined by $z^3-3Q_\ModuliParameter(0)z-2P_\ModuliParameter(0)=0$, which has no repeated roots if $q_\ModuliParameter$ is non-special.

We now assume by contradiction that the sections $\phi(p_2)-\phi(p_1), \phi(p_3)-\phi(p_2)$ are both identically torsion on $\ModuliSpace$.

We can fibre the threefold $\ModuliSpace$ with the surfaces $\ModuliSpace_c$ defined by the condition that the $j$-invariant of $E_\ModuliParameter$ is equal to $c$ exactly when $\ModuliParameter \in \ModuliSpace_c$. We restrict our attention to one general enough such surface $\ModuliSpace_c$. For $\ModuliParameter \in \ModuliSpace_c$, the cubics defined by \eqref{E.cubic}, or rather the corresponding homogeneous cubics in the variables $(\LastCoord:U:Z)$ (where $u=U/\LastCoord, z=Z/\LastCoord$), all have the same $j$-invariant and are therefore all projectively isomorphic (this is a classical fact; for a reference, see \cite[Corollary 3.5]{MR3777131}).
In particular, any two of the resulting equations in $\LastCoord, U, Z$ are related by a $3\times 3$ invertible linear substitution in the homogeneous variables. 

We now choose a flex on $E_\ModuliParameter$ whose $U$ and $Z$-coordinates are both non-zero (generically on $\ModuliSpace_c$). Note that such a flex exists: there are 9 flexes, and at most 6 have $U=0$ (3 points) or $Z=0$ (another 3 points).
Our chosen flex can be sent to $(0:1:1)$ by a projective transformation which depends algebraically 
on $\ModuliParameter \in \ModuliSpace_c$ and can be represented by a matrix with rows of the form $(*,*,0)$, $(0,*,0)$, $(0,0,*)$. Taking a finite cover of $\ModuliSpace$ over which the flex becomes rational, the corresponding projective transformation can also be chosen to be rational. Moreover, applying such a transformation preserves the property $U(q_\ModuliParameter)=0$. The polynomials $P_\ModuliParameter, Q_\ModuliParameter$ are also transformed according to an element of $\operatorname{PGL}_2$, so the shape of equation \eqref{E.cubic} remains the same. Note that the $\operatorname{PGL}_2$-equivalence class of the pair $(P_\ModuliParameter, Q_\ModuliParameter)$ determines the isomorphism class of $\tilde{X}_\ModuliParameter$, because $\tilde{X}_\ModuliParameter$ can be described by the affine equation $y^2=P_\ModuliParameter(u)^2 - Q_\ModuliParameter(u)^3$. In particular, our projective changes of variables do not alter the isomorphism class of $\tilde{X}_\ModuliParameter$.

Hence, we can assume that for each $\ModuliParameter \in \ModuliSpace_c$ the genus-1 curve $E_\ModuliParameter$ passes through $(0:1:1)$ and has a flex there. We choose this flex as the origin of each of these curves, which then acquire the structure of elliptic curves.

We now fix arbitrarily a point $\ModuliParameter_0 \in \ModuliSpace_c$ and denote by $E^*:=E_{\ModuliParameter_0}$ the corresponding elliptic curve. For every $\ModuliParameter \in \ModuliSpace_c$, there is a projective change of variables $L_\ModuliParameter$ that sends $E_\ModuliParameter$ to $E^*$. By continuity, the transformations $L_\ModuliParameter$ fix the flex $(0:1:1)$: indeed, the flex $(0:1:1)$ of $E_\ModuliParameter$ is sent by $L_\ModuliParameter$ to a flex of $E^*$, and the 9 flexes of $E^*$ form a discrete set. {The algebraic morphism $L_\ModuliParameter : E_\ModuliParameter \to E^*$ therefore sends the origin of the group law of $E_\ModuliParameter$ to that of $E^*$, and is thus a group homomorphism.}

{We claim that the line $U=0$ is sent by $L_\ModuliParameter$ to a line $\ell_0$ which meets $E^*$ in two (and therefore three) torsion points. 
To see this, recall that the points on the curve $E_\ModuliParameter$ given by \eqref{E.cubic} with $U=0$ are precisely the three points $\phi(p_1), \phi(p_2), \phi(p_3)$. Denote by $R_i$ the image of $\phi(p_i)$ via $L_\ModuliParameter$.
By assumption, the sections $\phi(p_1)-\phi(p_2)$ and $\phi(p_3)-\phi(p_2)$ are torsion on $E_\ModuliParameter$. 
Their images $L_\ModuliParameter(\phi(p_1)) - L_\ModuliParameter(\phi(p_2))=R_1-R_2$ and $L_\ModuliParameter(\phi(p_3))-L_\ModuliParameter(\phi(p_2))=R_3-R_2$ are therefore torsion points on $E^*$. Note in particular that the points $R_i$, for $i=1,2,3$, are the three intersections of $\ell_0$ with $E^*$. \\
Given any projective line $\ell \subset \mathbb{P}_2$, the three points of intersection of $\ell$ with $E^*$ (counted with multiplicity) sum to zero, because the chosen origin for the group law on $E^*$ is a flex.
Applying this to the line $\ell_0$, we obtain that in the group $E^*$ the points $R_1-R_2$, $R_3-R_2$ and $R_1+R_2+R_3$ are all torsion, which easily implies that each of $R_1, R_2, R_3$ is torsion (indeed, $3R_2 = (R_1+R_2+R_3)-(R_1-R_2)-(R_3-R_2)$, so $R_2$ is torsion, and therefore so are $R_1=R_2+(R_1-R_2)$ and $R_3=R_2+(R_3-R_2)$). Thus, as $\ModuliParameter$ varies over $\ModuliSpace_c$, the intersection points $\ell_0 \cap E^*$ vary with continuity but are always torsion: this implies that they are actually constant as $\ModuliParameter$ varies, and therefore that $\ell_0$ is constant in $\ModuliParameter$. Since for $\ModuliParameter=\ModuliParameter_0$ we have $\ell_0=\{U=0\}$, we obtain that the transformations $L_\ModuliParameter$ all fix the line $U=0$.
}

By further composing with a linear projective transformation of the form 
\[
\begin{pmatrix}
    1 & 0 & 0 \\
    0 & \rho_\ModuliParameter & 0 \\
    0 & 0 & \rho_\ModuliParameter
\end{pmatrix}
\]
for a suitable $\rho_\ModuliParameter \neq 0$ we can assume, in addition to all the previous properties (shape of the equation \eqref{E.cubic}, choice of flex, isomorphism class of $\tilde{X}_\ModuliParameter$),
that one intersection of $U=0$ with $E_\ModuliParameter$ is constant in $\ModuliParameter$, say equal to $(1:0:1)$. 
(This follows easily from the fact that $z^3-3Q(0)z-2P(0)=0$ has three distinct roots.)

Restricting to a curve $\ModuliSpace'_c$ inside $\ModuliSpace_c$, we can further ensure that a second intersection of $E_\ModuliParameter$ with the line $U=0$ is constant in $\ModuliParameter \in \ModuliSpace'_c$. The form of \eqref{E.cubic} guarantees that the sum of the $z$-coordinates of the three (affine) intersection points of $E_\ModuliParameter$ with $u=0$ vanishes. Therefore, if two intersections are constant, the third one is too.

In summary, 
for all $\ModuliParameter$ in the curve $\ModuliSpace'_c$ the transformation $L_\ModuliParameter^{-1}$ fixes the three points of $E^*$ on $U=0$ and fixes the line $U=0$; this implies that it fixes the line $U=0$ pointwise, as well as fixing the flex $(0:1:1)$.

It follows from these conditions that -- up to a nonzero factor -- for $\ModuliParameter\in \ModuliSpace'_c$ the transformation $L_\ModuliParameter$ is represented by a matrix of the form 
\[
\begin{pmatrix}
1 & 0 & 0 \\
0 & a & 0 \\
0 & a-1 & 1
\end{pmatrix},
\]
where $a$ is a function of $\ModuliParameter$. 
We now recall that $L_\ModuliParameter$ sends the equation \eqref{E.cubic} to the equation of $E^*$, which has the same shape and in particular has a vanishing coefficient of $z^2$. This yields $a=1$, so $L_\ModuliParameter$ is the identity: it follows that all curves $\tilde X_\ModuliParameter$ for $\ModuliParameter \in \ModuliSpace'_c$ are isomorphic. We have reached a contradiction, because by assumption only finitely many $\tilde X_\ModuliParameter$ are isomorphic to a given one, and this proves the assertion.
\end{proof}

\begin{remark}\label{rmk: section is non-constant over any two-dimensional base}
The proof also yields that our section cannot be a (torsion or non-torsion) constant on a subscheme with iso-constant fibres if the base has dimension at least $2$. Note that it makes sense to speak of a `constant' section if the scheme is isotrivial.
\end{remark}

\subsection{Families of genus-2 curves leading to an isotrivial elliptic scheme}

The argument in the proof of \Cref{prop: no identically torsion section on the whole moduli space} allows us to be explicit about the families of genus-2 curves for which our construction yields isotrivial elliptic schemes. Keeping the notation of that proof, we observe that after base-change to $\ModuliSpace_c$ we obtain a trivial square-elliptic family $(E^*)^2\times R_c$, where $R_c$ is a finite cover of $\ModuliSpace_c$, and where our section corresponds to a map $\sigma$ from $R_c$ to $(E^*)^2$. 
We now show that this map is not dominant. We also describe its image, a curve in $(E^*)^2$. 

First, by a translation on the coordinate $u=U/\LastCoord$, we can assume that $u(q_\ModuliParameter)$ is identically zero, and therefore that $\phi(p_{1,\ModuliParameter})$, $\phi(p_{2,\ModuliParameter})$, $\phi(p_{3,\ModuliParameter})$ are the three intersection points of $E_\ModuliParameter$ with the line $U=0$. Recall from \Cref{setup} that the section we are considering is $(\phi(p_{1,\ModuliParameter})-\phi(p_{2,\ModuliParameter}), \phi(p_{2,\ModuliParameter})-\phi(p_{3,\ModuliParameter}))$. 

Fix an elliptic curve $E^*$ in the family and recall that our coordinates on $\mathbb{P}_2$ are denoted by $[\LastCoord: U : Z]$. We can assume that $E^*$ has a flex at $(0:1:1)$, which we take as the origin of the group law, and that it passes through $(1:0:1)$. 
As in the proof of \Cref{prop: no identically torsion section on the whole moduli space}, we can always arrange for $E^*$ to have these properties.

Consider the projective transformations $L_\ModuliParameter$ fixing both the flex $(0:1:1)$ and the point $(1:0:1)$. 
 Their matrices are of the form 
\begin{equation}\label{eq: matrices in the group Gamma}
     \begin{pmatrix}
 a & b & -b \\
 c & d & -c \\
 a-b-e & d-c-e & e
 \end{pmatrix};
 \end{equation}
 these matrices give a 4-dimensional algebraic subgroup $\Gamma$ of $\operatorname{PGL}_3$. This group acts faithfully on the projective plane $\P_2$, and also on the set of equations of the form \eqref{E.cubic} up to scalars (if an element of $\Gamma$ fixes an equation up to scalars, it induces an automorphism of the corresponding elliptic curve with two fixed points. Any such automorphism is either the identity or an involution that acts as $-1$ on the tangent space at the identity. However, by continuity, one sees easily that the action on the tangent space must be the identity). 
 
 \medskip
 
 \begin{definition}\label{def: variety V}
We let $V\subset \Gamma$ be the subvariety consisting of the $L\in \Gamma$ such that the coefficients of $Z^2\LastCoord$ and $Z^2U$ in $L(E^*)$ are $0$.     
 \end{definition}
Note that $V$ has dimension $\ge 2$, because it is the fibre above the point $(0,0)\in\A^2$ of a map $\Gamma\to\A^2$ (it is non-empty because it contains the identity).
 In fact, the dimension of $V$ is exactly $2$. If we had $\dim V \ge 3$, there would exist inside $V$ an irreducible curve corresponding to a single isomorphism class of curves $\tilde X$ (to see this, note that the genus 2 curves parametrised by $R_c$ form a 2-dimensional family in moduli: every curve of genus 2 corresponds to finitely many elliptic curves via our construction, so fixing the $j$-invariant makes the dimension of the family drop by one).

The corresponding polynomials $P^2-Q^3$ would all be related by a homography, up to a constant factor; but then, by continuity, the same would be true individually of the polynomials $P, Q$ (because for any fixed $F(u)$ there are only finitely many solutions $(P, Q)$ to the equation $P(u)^2-Q(u)^3=F(u)$, by\footnote{This may also be proved directly as follows. 
Let $F(u)=P(u)^2-Q(u)^3$, where $F$ is fixed and without multiple roots. If there are infinitely many solutions in polynomials $P,Q$ of degrees $3, 2$, then there is an algebraic family of solutions of dimension $1$. In other words, we may assume that the coefficients of $P,Q$ are rational functions on a certain curve $Z$, not all constant. Let $\delta$ be a derivation on $\C(Z)$ with constant field $\C$. Applying $\delta$ to the equation we obtain $2(\delta P)P=3(\delta Q)Q^2$. On the other hand, $P$ and $Q$ must be coprime, hence $Q^2$ divides $\delta P$ and $P$ divides $\delta Q$, forcing $\delta P,\delta Q$ to vanish. This implies that $P, Q$ are constant, contradiction.} Lemma \ref{lemma: 3-torsion points}.)
 In conclusion, we would have a $1$-parameter family of matrices in $\Gamma$ carrying a fixed cubic $z^3-3Q(u)z-2P(u)$ into a multiple of $z^3-3\rho^2(cu+d)^2Q(\gamma(u))z-2\rho^3(cu+d)^3P(\gamma(u))$ for a nonzero $\rho$ and a M\"obius transformation $\gamma(u)=(au+b)/(cu+d)$. This would give a $1$-dimensional family of automorphisms $(u,z)\mapsto (\gamma(u),z/\rho(cu+d))$ of the corresponding elliptic curve, which is impossible (e.g., by the general theory; alternatively, note that these transformations would have to stabilise the set of flexes, or else the branch points of the map $u:E^*\to\P_1$). 

 \begin{remark}
      Naturally, it is possible to prove the contention about $\dim V$ also by explicit calculation. We could also use \Cref{prop: no identically torsion section on the whole moduli space}: if $V$ had dimension $3$, we would get a subvariety of dimension $2$ over which our section is constant, which contradicts \Cref{rmk: section is non-constant over any two-dimensional base}.
  \end{remark}

Note that -- with our normalisations -- the family $E_\ModuliParameter$ is contained in the family of plane cubics that are projectively equivalent to $E^*$, pass through $(1 : 0: 1)$, and have a flex at $(0 : 1 : 1)$. By definition, the family of all such plane cubics is given by $\{ \gamma(E^*) : \gamma \in \Gamma\}$. Moreover, every curve in the family $E_\ModuliParameter$ is represented by any equation for which the coefficients of $Z^2\LastCoord$ and $Z^2U$ vanish, hence the 2-dimensional family $E_\ModuliParameter$ is contained in the $2$-dimensional family $\{ L(E^*) : L \in V\}$.
Thus, the cubic curves $E_\ModuliParameter$ may be taken to be the curves $E_L:=L(E^*)$ for $L\in V$: they are all isomorphic, but they come from generically non-isomorphic curves $\tilde X$.

Denote by $L_\ModuliParameter$ the projective transformation in $V$ such that $L_\ModuliParameter(E^*)=E_\ModuliParameter$. Trivially, $L_\ModuliParameter^{-1}$ gives an isomorphism between $L_\ModuliParameter(E^*)$ and $E^*$: this isomorphism takes the line $U=0$ to a line through $(1:0:1)$, hence sends the three points $\phi(p_{1,\ModuliParameter}), \phi(p_{2,\ModuliParameter}), \phi(p_{3,\ModuliParameter})$ to the three intersection points of $L_\ModuliParameter^{-1}(\{U=0\})$ with $E^*$. Thus, seen as a map $R_c \to (E^*)^2$, the section $\sigma$ takes the form
\[
s \mapsto (A_\ModuliParameter - B_\ModuliParameter, B_\ModuliParameter-C_\ModuliParameter),
\]
where $A_\ModuliParameter = (1:0:1), B_\ModuliParameter, C_\ModuliParameter$ are the three intersection points of $L_\ModuliParameter^{-1}(\{U=0\})$ with $E^*$. Since $A_\ModuliParameter+B_\ModuliParameter+C_\ModuliParameter=0$ in $E^*$ (they lie on the same line and the origin is a flex), we can rewrite the above as
\[
s \mapsto (A_\ModuliParameter - B_\ModuliParameter, A_\ModuliParameter+2B_\ModuliParameter),
\]
which shows that the image of the section is contained in the 1-dimensional subvariety 
\begin{equation}\label{eq: image of section for constant j-invariant}
\{ (x,y) : 2x+y = 3(1:0:1) \}
\end{equation}
of $(E^*)^2$.

Note that the isomorphism class of $E^*$ is determined by the chosen parameter $c$, whereas the point with coordinates $(1:0:1)$ in our model is determined by the choice of section at infinity.

\begin{remark}
    Continuing with the notation above, 
    we note that when the point $(1:0:1)$ is not torsion on $E^*$ our method does not yield anything on the curves parametrised by the space $\ModuliSpace_c$. On the contrary, when it is torsion, we have pencils like in the example presented in \S\ref{subsect: pencil}.
\end{remark}

We further clarify the geometry of families of curves of genus $2$ for which our doubly-elliptic scheme is isotrivial. We have shown that any such family is given (up to finite cover) by the images of a fixed genus-2 curve through linear substitutions parametrised by the variety $V$ of \Cref{def: variety V}. We now show that the geometry of $V$ is as simple as possible.

\begin{proposition}\label{prop: V is rational}
    The variety $V$ appearing in \Cref{def: variety V} is rational (over $\overline{\Q}$).
\end{proposition}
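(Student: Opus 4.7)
The plan is to read off the defining equations of $V$ directly from the matrix parametrization \eqref{eq: matrices in the group Gamma} and exhibit a birational map $\mathbb{P}_2 \dashrightarrow V$ by inspection. I would parametrize $\Gamma$ by homogeneous coordinates $(a:b:c:d:e) \in \mathbb{P}_4$ via \eqref{eq: matrices in the group Gamma}; since inversion is a birational automorphism of the group $\Gamma$, it is equivalent to prove rationality of
\[
V' := \{M \in \Gamma : F^*(M \cdot (\LastCoord, U, Z)^T) \text{ has vanishing coefficients of } Z^2 \LastCoord \text{ and } Z^2 U \},
\]
where $F^*(\LastCoord, U, Z)$ is a cubic equation for $E^*$. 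Let $\LastCoord', U', Z'$ denote the three linear forms making up $M \cdot (\LastCoord, U, Z)^T$; their $(\LastCoord, U, Z)$-coefficients, read off the rows of $M$, are $(a, b, -b)$, $(c, d, -c)$, and $(a-b-e,\, d-c-e,\, e)$ respectively.

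Expanding $F^*(\LastCoord', U', Z') = \sum_{i+j+k=3} c_{ijk} (\LastCoord')^i (U')^j (Z')^k$ by the multinomial theorem and extracting the coefficients $A$ of $Z^2 \LastCoord$ and $B$ of $Z^2 U$ produces two polynomials, each homogeneous of degree $3$ in $(a,b,c,d,e)$. The structural observation I would rely on is: (i) $A$ does not involve $d$ and is linear in $a$; (ii) $B$ does not involve $a$ and is linear in $d$. For (i), producing a $Z^2 \LastCoord$ monomial forces the total $U$-degree contributed by the three factors to vanish, eliminating every $U$-coefficient of $\LastCoord', U', Z'$ (in particular $d$ and $d-c-e$); meanwhile, the single factor that contributes a $\LastCoord$ does so either through the $a$ in $\LastCoord'$ or through the $a-b-e$ in $Z'$, both linear in $a$. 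Statement (ii) follows by the symmetric argument. One can therefore write
\[
A = a\,A_1(b,c,e) + A_0(b,c,e), \qquad B = d\,B_1(b,c,e) + B_0(b,c,e),
\]
with $A_1, B_1$ homogeneous of degree $2$ and $A_0, B_0$ homogeneous of degree $3$ in $(b,c,e)$.

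Given these forms, the equations $A = 0$ and $B = 0$ can be solved for $a$ and $d$ as rational functions of $(b, c, e)$, producing a birational map
\[
\mathbb{P}_2 \dashrightarrow V', \qquad (b:c:e) \mapsto \bigl(-A_0 B_1 : b\,A_1 B_1 : c\,A_1 B_1 : -A_1 B_0 : e\,A_1 B_1 \bigr),
\]
whose inverse is the projection $(a:b:c:d:e) \mapsto (b:c:e)$. This realises $V'$, and hence $V$, as birational to $\mathbb{P}_2$.

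The main obstacle I anticipate is verifying that $A_1$ and $B_1$ do not vanish identically as polynomials in $(b,c,e)$; otherwise the equations $A=B=0$ would not cut out $V$ with the clean structure above. They depend linearly on the coefficients $c_{ijk}$ of $F^*$, so their vanishing defines proper closed subsets of the space of cubics of the required form, and for a generic such $E^*$ the non-degeneracy is automatic. For the specific $E^*$ fixed earlier one either checks the non-degeneracy by an explicit calculation, or uses the fact that $\dim V = 2$ is constant in the family to reduce to the generic case by a specialization argument.
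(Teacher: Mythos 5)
Your argument is correct, and it takes a genuinely different route from the paper's. The paper proves rationality geometrically: it slices $\overline{V}$ by the pencil of hyperplanes $H\supset L$ (where $L$ is the line $b=c=e=0$), shows $H\cap\overline{V}$ equals $4L$ plus a residual quintic $R_H$ with two singular points, projects $R_H$ from one of them onto a singular plane cubic to get rationality of $R_H$, and then invokes Castelnuovo to pass from unirational to rational. Your observation short-circuits all of this: since $a$ occurs only as a coefficient of $\LastCoord$ in the linear forms $\LastCoord',Z'$ and $d$ only as a coefficient of $U$ in $U',Z'$, the coefficient of $Z^2\LastCoord$ is independent of $d$ and linear in $a$, and symmetrically for $Z^2U$, so one solves the two cubics for $a$ and $d$ and gets an explicit birational map $\mathbb{P}_2\dashrightarrow V$ inverse to the projection from $L$ --- no multiplicity computations, no Castelnuovo. (Incidentally, this projection from $L$ is the same map underlying the paper's hyperplane-pencil argument; you are showing directly that it is birational rather than that its fibres are rational curves.) On the one loose end you flag: the non-degeneracy of $A_1$ and $B_1$ is in fact automatic for \emph{every} $E^*$ of the shape \eqref{E.cubic}, not just generic ones, because the $Z^3$ term of $F^*$ contributes the monomial $3e^2$ to each of them (this is visible in the paper's own displayed initial form at $s_1$, which is exactly your $B_1$ and has the term $3e^2$); I would rely on this rather than on your proposed specialisation argument, since $\dim V=2$ alone does not exclude, say, $A_1\equiv 0$ (which would still yield a two-dimensional locus, with $a$ free over a curve in the $(b:c:e)$-plane). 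Both proofs implicitly treat $V$ as irreducible; your parametrisation identifies the distinguished component as the closure of the graph $\{a=-A_0/A_1,\ d=-B_0/B_1\}$, and any hypothetical extra two-dimensional component would be a linear subspace lying over a point of $\{A_0=A_1=B_0=B_1=0\}$, hence rational anyway. What your approach buys is an elementary, fully explicit parametrisation (useful if one wants to write down the family of genus-2 curves with isotrivial associated elliptic scheme); what the paper's buys is robustness, since it does not depend on the fortunate bilinear structure of the two defining cubics.
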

\begin{proof}
    Replace $V$ with its projective closure $\overline{V}$ in $\mathbb{P}(a,b,c,d,e) \cong \mathbb{P}_4$. It suffices to show that $\overline{V}$ is rational. One can check easily that $\overline{V}$ contains the line $L \cong \mathbb{P}_1$ given by $b=c=e=0$. Note that $\overline{V}$ is by definition the intersection of two cubics $X_{UZ^2}(a,b,c,d,e)=0$ and $ X_{\LastCoord Z^2}(a,b,c,d,e)=0$ in $\mathbb{P}_4$ (we impose two conditions, the vanishing of the coefficients $UZ^2, \LastCoord Z^2$, and each such coefficient is a homogeneous polynomial of degree 3 in the variables $a, b, c, d, e$ of \eqref{eq: matrices in the group Gamma}).

    Consider now any hyperplane $H$ in $\mathbb{P}_4$ containing $L$ (these hyperplanes $H$ can be rationally parametrised). The intersection $C_H := H \cap \overline{V}$ is the intersection of two cubics in $H \cong \mathbb{P}_3$, hence it is a curve of degree 9.
    
We claim that $C_H$ contains $L$ with multiplicity $4$, hence there is a residual curve $R_H$ of degree 5. Moreover, the points $s_1 := [0:0:0:1:0]$ and $s_2 := [1:0:0:0:0]$ are singular points of $R_H$.

Assume for the moment that $\overline{V} \cap P$ contains $L$ with multiplicity 4 (we discuss below how to prove this): we then show how to check that $s_1$ is a singular point of $R_H$. The proof that $s_2$ is a singular point of $R_H$ and that $\overline{V} \cap P$ does, in fact, contain $L$ with multiplicity 4 is similar.

Write the polynomials $P$ and $Q$ as
\[
P = A_0 U^3 + A_1 U^2 \LastCoord + A_2 U \LastCoord^2 + A_3 \LastCoord^3, \quad Q = B_0 U^2 + B_1  U  \LastCoord + B_2 \LastCoord^2.
\]
We work in the affine chart $d \neq 0$, which contains the point $s_1$. We thus set $d=1$; note that our candidate singular point is the origin of the resulting affine space $\mathbb{A}^4$. We then compute two explicit polynomials generating the ideal defining $\overline{V}$. The initial forms of these polynomials at the origin are the quadric
\[
(-2A_2 - 3B_2)b^2 + (-4A_1 - 3B_1)bc + 3B_1be + (-6A_0 - 3B_0)c^2 + 6B_0ce + 3e^2
\]
and the cubic
\[
\begin{aligned}
(-6A_3 - 3B_2)ab^2 & + (-4A_2 - 3B_1)abc + 6B_2abe + (-2A_1 - 3B_0)ac^2 + 3B_1ace
    \\ & + 3ae^2 + 3B_2b^3 + (-2A_2 + 3B_1)b^2c + 3B_2b^2e + (-4A_1 + 3B_0)bc^2 \\
    & +    6B_1bce - 3be^2 - 6A_0c^3 + 9B_0c^2e - 3e^3.
    \end{aligned}
\]
Thus, the vertex of the tangent cone to $\overline{V}$ at $s_1$ is a point of multiplicity 6. Taking away multiplicity 4 (coming from the line $L$), we see that $s_1$ is a point of multiplicity 2 of the residual curve $R_H$, hence a singular point. 

To check the statement that the line $L$ is contained in $H \cap \overline{V}$ with multiplicity 4, one similarly computes the tangent cone at any point with $b=c=e=0$: it turns out that this cone is defined by two initial forms of degree (at least) 2, hence the point has multiplicity (at least) 4 and $\overline{V} \cap H$ contains $L$ with multiplicity 4.

Moreover, we claim that $R_H$ is (generically) not contained in a hyperplane of $H$. Indeed, suppose that $R_H$ is contained in a hyperplane $H' \cong \mathbb{P}_2$ of $H$. The curve $R_H$ would in particular be contained in $H' \cap \{X_{\LastCoord Z^2}=0\}$, where $X_{\LastCoord Z^2}$ is the cubic form discussed above. Unless $X_{\LastCoord Z^2}=0$ contains $H'$ (which does not happen for a general $H$), the intersection is a curve of degree $3 = \deg X_{\LastCoord Z^2}$, hence it cannot contain the degree-5 curve $R_H$.

To summarise, we now know that for a generic hyperplane $H \cong \mathbb{P}_3$ containing $L$ the intersection $\overline{V} \cap H$ consists of the line $L$ with multiplicity $4$ together with a curve $R_H$ of degree 5, possessing two singular points $s_1 = [0:0:0:1:0]$ and $s_2 := [1:0:0:0:0]$.

Thus, projecting $R_H$ from $s_1$ to $\mathbb{P}_2$ gives a birational morphism from $R_H$ to a curve of degree 3 in $\mathbb{P}_2$ with a singular point (the image of $s_2$). A plane cubic with a singular point is rational, hence $R_H$ is also rational. The conclusion is that $\overline{V}$ can be covered by a rationally parametrised family of rational curves,
and is therefore rational over $\overline{\Q}$ (indeed, we have shown that $V$ is unirational, and Castelnuovo's theorem shows that unirational surfaces over $\overline{\Q}$ are rational). 
\end{proof}

\begin{remark}\label{rmk: moving the point at infinity}
    One can ask a complementary question: fix a smooth projective curve $X$ of genus $2$ and consider all possible choices of point at infinity $q \in X(\overline{\Q})$. The construction of \Cref{setup} gives a constant abelian scheme $E^2$ over $X$, together with a section $\sigma = (\phi(p_1)-\phi(p_2), \phi(p_2)-\phi(p_3))$ of $E^2$ (the section depends on $q \in X$). We show that this section cannot be identically torsion. To see this, fix an origin $O$ on $E$ that is a flex. 
    By \Cref{setup} (6), the points $\phi(p_i)$ are the points on $E$ (as given by \eqref{eq: affine equation for E}) with $x$-coordinate equal to $x(q)$. Since we have put the origin of the group law at a flex and the points $\phi(p_i)$ lie on the same line, we have $\sum_i \phi(p_i)=0$. Hence, if the differences $\phi(p_1)-\phi(p_2), \phi(p_2)-\phi(p_3)$ are torsion, so is each of the points $\phi(p_1), \phi(p_2), \phi(p_3)$. In particular, if $\sigma$ is identically torsion, then so is $\phi(p_i)$ for every $i$.
    As $x(q)$ varies, the points $\phi(p_i)$ are given by the intersection of \eqref{eq: affine equation for E} with any line $x=\text{constant}$. By the above argument, if $\sigma$ is identically torsion, any such $\phi(p_i)$ is torsion, contradiction (we would have that every point in $E(\overline{\Q})$ is torsion).
We note that \cite[Theorem 1.3]{LP} gives the existence of infinitely many choices of $q \in X(\mathbb{C})$ for which \textit{no} connected finite étale cover admits non-trivial morphisms to $\mathbb{G}_m$. Interesting as this is, it is hard to see whether the same would hold over $\overline{\Q}$.
\end{remark}

\section{Two-parameter families of genus-2 curves}\label{sect: moduli space of genus 2 curves with marked 3-torsion points}

In this section we consider a partial extension of \Cref{T} to the case of families of curves $X_\ModuliParameter \to \ModuliSpace$ of genus 2 over bases $\ModuliSpace$ of dimension 2, see Theorems \ref{thm: dichotomy for families} and \ref{thm: two-parameters family body} below. If we try to mimic the proof of \Cref{T}, a difficulty that we face is that it is in principle possible for the abelian scheme $E_\ModuliParameter^2 \to \ModuliSpace$ (constructed from a choice of three-torsion section of $\Jac(\tilde{X})$) to be isotrivial. Our first objective is to show that this cannot happen for all possible choices of three-torsion sections. We thus need to understand the interactions between different choices of three-torsion sections.

To this end, we begin by recalling a description, due to Bruin-Flynn-Testa \cite{MR3263947}, of (a Zariski-open subvariety of) the moduli space $\mathcal{A}_2(\Sigma)$ of triples $(C, R_1, R_2)$, where $C$ is a smooth projective curve of genus $2$ and $R_1, R_2$ are three-torsion points of the Jacobian $J_C$ such that $e_3(R_1, R_2)=1$, where $e_3 : J_C[3] \times J_C[3] \to \mu_3$ is the Weil pairing on the 3-torsion. 
From this, we will deduce a description of (an open subset of) the moduli space of quadruples $(C, (P_1, Q_1), (P_2, Q_2), (P_3, Q_3))$, where:
\begin{enumerate}
    \item $C$ is a genus-2 curve, represented by the hyperelliptic model $y^2=f(x)$ with $f(x)$ separable of degree 6;
    \item $(P_i, Q_i)$ are polynomials of respective degrees $3, 2$ such that $f(x) = P_i(x)^2-Q_i(x)^3$; in particular, $(P_i, Q_i)$ corresponds to a 3-torsion point $R_i$ on $\Jac(C)$ (see Lemma \ref{lemma: 3-to-1 parametrisation});
    \item we have $e_3(R_1, R_2)=1$ and $R_3=R_1+R_2$ in $\Jac(C)[3]$.
\end{enumerate}

\begin{theorem}[{\cite[Theorem 6]{MR3263947}}]
The following hold:
\begin{enumerate}
    \item For $i=1, \ldots, 4$ there are polynomials $G_i, H_i \in \mathbb{Q}(r, s, t)[x]$, with $H_i(x)$ monic, and elements $\lambda_i \in \mathbb{Q}(r, s, t)^\times$ such that $f_{r, s, t} := G_i^2 + \lambda_i H_i^3$ is independent of $i$ and is a separable polynomial of degree 6 with coefficients in $\mathbb{Q}(r, s, t)$. One can take
    \[
    G_1 =  (s - st - 1)x^3 + 3s(r - t)x^2 + 3sr(r - t)x - st^2 + sr^3 + t, H_1 = x^2 + rx + t, \lambda_1 = 4s;
    \]
    \[
    G_2 = (s - st + 1)x^3 + 3s(r - t)x^2 + 3sr(r - t)x - st^2 + sr^3 - t, H_2 = x^2 + x + r, \lambda_2 = 4st;
    \]
    \[
    \begin{aligned}
            G_3=
\frac{1}{st+1}\big( & (s^2t^2 - s^2t + 2st + s + 1)x^3 + (3s^2t^2 - 3s^2tr + 3st + 3sr)x^2 \\
& + (3s^2t^2r - 3s^2tr^2 + 3str + 3sr^2)x + s^2t^3 - s^2tr^3 + 2st^2 + sr^3 + t\big),
    \end{aligned}
    \]
    \[
    H_3= sx^2 + (2sr - st - 1)x + sr^2, \lambda_3 = \frac{4t}{(st+1)^2}.
    \]
    \item Denote by $\mathcal{C}_{r, s, t}$ the curve over $\mathbb{Q}(r, s, t)$ given by $y^2 = f_{r, s, t}(x)$ and by $R_i$ the 3-torsion point on $\Jac(\mathcal{C}_{r, s, t})$ corresponding to the decomposition $f_{r, s, t} = G_i^2 + \lambda H_i^3$ (see \Cref{lemma: 3-torsion points}). The subgroup of $\Jac(\mathcal{C}_{r, s, t})$ generated by $R_1, R_2$ has order 9 and consists of the identity, together with the eight 3-torsion points $\pm R_1, \pm R_2, \pm R_3, \pm R_4$ (these eight points are non-trivial and distinct). Moreover, $e_3(R_1, R_2)=1$ and $R_3=R_1+R_2$.
    \item Let $K$ be a number field. Any sufficiently general triple $(C, R_1, R_2)$, where $C/K$ is a curve of genus 2, $R_1, R_2 \in \Jac(C)(K)[3]$ generate a subgroup of order $9$, and $e_3(R_1, R_2)=1$, is a specialisation of $\mathcal{C}_{r, s, t}$ at suitable $(r, s, t) \in K^3$.
\end{enumerate}
    
\end{theorem}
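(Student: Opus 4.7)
The argument splits along the three parts of the statement. \textbf{Part (1)} is a direct verification in polynomial algebra: I would expand $G_i(x)^2 + \lambda_i H_i(x)^3$ for each $i \in \{1,2,3,4\}$ and compare coefficients to confirm that all four expressions coincide as polynomials in $\mathbb{Q}(r,s,t)[x]$. The formulae for $G_4, H_4, \lambda_4$, not given explicitly in the statement, are produced analogously to the case $i=3$ but with the ansatz corresponding to $R_4 = R_1 - R_2$ instead of $R_3 = R_1 + R_2$. Separability of $f_{r,s,t}$ is then confirmed by computing $\operatorname{disc}_x(f_{r,s,t}) \in \mathbb{Q}(r,s,t)$ and observing that it is not identically zero; this is a single symbolic calculation.

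\textbf{Part (2).} By \Cref{lemma: 3-torsion points}, each decomposition $f_{r,s,t} = G_i^2 + \lambda_i H_i^3$ produces a non-trivial 3-torsion point $R_i \in \Jac(\mathcal{C}_{r,s,t})$. By \Cref{lemma: 3-to-1 parametrisation}, the involution $R \mapsto -R$ preserves the (normalised) $H_i$ and negates $G_i$; since the $H_i$ are pairwise non-proportional over the generic point of $\mathbb{A}^3_{r,s,t}$, the four pairs $\{\pm R_i\}$ are disjoint, giving eight distinct non-trivial 3-torsion points. To establish that these, together with $0$, form a subgroup of order $9$ --- equivalently, to pin down the relations $R_3 = R_1 + R_2$ and $R_4 = R_1 - R_2$ --- I would work with the explicit divisor representatives $\tfrac{1}{3}\operatorname{div}(y + G_i(x))$ supplied by \Cref{lemma: 3-torsion points}, compute them in Mumford coordinates on a convenient rational specialisation $(r_0, s_0, t_0) \in \mathbb{Q}^3$, and verify the sum relation there. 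Since the 3-torsion subgroup is étale over the open locus of smooth fibres, the identification extends to the generic point. Finally, the Weil pairing $e_3(R_1, R_2)$ takes values in the finite set $\mu_3$ and is locally constant on the irreducible base $\mathbb{A}^3_{r,s,t}$, so it suffices to evaluate it at one specialisation, where it can be computed directly from the defining formula using the function $y + G_1(x)$ restricted to the support of a divisor representing $R_2$.

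\textbf{Part (3).} The moduli space $\mathcal{A}_2(\Sigma)$ is a finite étale cover of a Zariski-open subset of $M_2$, hence is irreducible of dimension $3$. The construction $(r,s,t) \mapsto (\mathcal{C}_{r,s,t}, R_1, R_2)$ defines a rational map $\Psi : \mathbb{A}^3 \dashrightarrow \mathcal{A}_2(\Sigma)$, and my plan is to show that $\Psi$ is in fact birational. I would do this by computing the Igusa--Clebsch invariants of $\mathcal{C}_{r,s,t}$ as explicit rational functions of $(r,s,t)$ together with the data recording which 3-torsion point is $R_1$ and which is $R_2$: inverting this system (possible because the target has dimension $3$ and the explicit formulae are polynomial of low degree) gives a rational inverse to $\Psi$ defined over $\mathbb{Q}$, and in particular establishes that the Jacobian of $\Psi$ has generic rank $3$. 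For any sufficiently general $K$-rational triple, this inverse then yields the required $(r,s,t) \in K^3$.

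The main obstacle is the sign- and cube-root ambiguity in part (2): \Cref{lemma: 3-to-1 parametrisation} is only $3$-to-$1$ onto $\Jac(C)[3] \setminus \{0\}$ and carries a further $\pm$-identification, so verifying the \emph{exact} relations $R_3 = R_1 + R_2$ and $R_4 = R_1 - R_2$ (rather than some sign variant) cannot be bypassed by a dimension count and requires a genuine divisor calculation on a sample fibre. Once this identification is pinned down, the remaining steps are essentially mechanical polynomial manipulations.
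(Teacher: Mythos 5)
The first thing to say is that the paper does not prove this statement at all: it is quoted verbatim as Theorem~6 of Bruin--Flynn--Testa \cite{MR3263947} and used as a black box. There is therefore no internal proof to compare your proposal against; what you have written is a plan for reproving an imported result.

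Taken on its own terms, your sketch is broadly sound in Parts (1) and (2). Part (1) is indeed a finite symbolic verification, and in Part (2) the rigidity argument --- torsion sections of the finite \'etale group scheme $\Jac(\mathcal{C}_{r,s,t})[3]$ over an irreducible base agree everywhere once they agree at one point, and the Weil pairing of two such sections is a locally constant $\mu_3$-valued function --- is the right way to reduce the relations $R_3=R_1+R_2$ and $e_3(R_1,R_2)=1$ to a single numerical specialisation; you also correctly flag that the sign and cube-root ambiguities in the paper's parametrisation of $3$-torsion force a genuine divisor computation rather than a dimension count. The weak point is Part (3): showing that the Jacobian of $\Psi$ has generic rank $3$ only gives dominance (hence generic finiteness), not birationality, and ``inverting the system because the target has dimension $3$ and the formulae have low degree'' is an assertion, not an argument --- a dominant map of threefolds given by low-degree polynomials can perfectly well have degree greater than $1$. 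To conclude that a sufficiently general $K$-rational triple $(C,R_1,R_2)$ lifts to a $K$-rational point $(r,s,t)$ you must actually exhibit a rational inverse (or at least a rational section) of $\Psi$ defined over $\mathbb{Q}$, which is the substantive content of the Bruin--Flynn--Testa construction. As written, Part (3) is a plausible programme but not yet a proof.
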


\begin{remark}
    The result stated in \cite{MR3263947} also gives explicit expressions for $G_4, H_4, \lambda_4$ and holds more generally over fields of characteristic at least 5. We will not need these facts.
\end{remark}

In other words, the curve $\mathcal{C}_{r, s, t}$ is the universal curve over an open subscheme of the modular variety $\mathcal{A}_2(\Sigma)$. Note that this result shows in particular that $\mathcal{A}_2(\Sigma)$ is rational. 
We are interested in $\mathcal{A}_2(\Sigma)$ because it is a natural moduli space over which we can carry out our construction of the doubly-elliptic scheme $E^2$ corresponding to a choice of point of order $3$. In fact, given our description \eqref{eq: affine equation for E} of the elliptic curve $E$, we also need a trivialisation of each degree-3 étale cover that we want to consider. Concretely, we want to write $f_{r, s, t}(x)$ in the form $P(x)^2-Q(x)^3$ of \Cref{lemma: 3-to-1 parametrisation}. In order to be able to do so rationally, we pass to the cover $\mathcal{A}$ of $\mathcal{A}_2(\Sigma)$ given by the equations
\[
\begin{cases}
    s = -\frac{1}{4}s_1^3 \\
    t = t_1^3 \\
    -\frac{4t}{(st+1)^2} = u_1^3.
\end{cases}
\]
On this cover (which, unfortunately, is no longer rational) we can write
\[
f_{r, s, t}(x) = G_1^2 - (s_1 H_1)^3 = G_2^2 - (s_1t_1H_2)^3 = G_3^2 - (u_1H_3)^3.
\]
Thus, we have an explicit moduli space of genus-2 curves whose Jacobians are equipped with three distinct 3-torsion points, namely those given by $(P_1, Q_1) = (G_1, s_1H_1)$, $(P_2, Q_2)=(G_2, s_1t_1H_2)$ and $(P_3, Q_3)=(G_3, u_1H_3)$ under the parametrisation of \Cref{lemma: 3-to-1 parametrisation}. For each point $a \in \mathcal{A}$ we then have the three elliptic curves $E_1= E_{1, a}, E_2=E_{2, a}$ and $E_3=E_{3,a}$ given by \eqref{eq: affine equation for E} for the pair $(P_i, Q_i)$. In particular, we have three well-defined $j$-invariant functions $j_i(a) = j(E_{i,a})$ for $i=1, 2, 3$.
\begin{proposition}\label{prop: joint j-invariants}
    The map
    \[
    \begin{array}{cccc}
    J : & \mathcal{A} &\to & \mathbb{A}^3 \\
    & a & \mapsto & (j_1(a), j_2(a), j_3(a))
    \end{array}
    \]
    is dominant.
\end{proposition}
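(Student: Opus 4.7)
The plan is to reduce the claim to an explicit computation at a single point. Since the cover $\mathcal{A} \to \mathcal{A}_2(\Sigma)$ is finite and $\mathcal{A}_2(\Sigma)$ is birational to $\mathbb{A}^3$ via the coordinates $(r,s,t)$, the source $\mathcal{A}$ has dimension $3$, equal to the dimension of the target $\mathbb{A}^3$. Thus $J$ is dominant if and only if its differential $dJ_a$ has rank $3$ at some point $a$. Moreover, each $j_i$ depends only on the isomorphism class of $E_i$, which in turn depends only on the $3$-torsion point $R_i$ (since replacing $Q_i$ by $\theta Q_i$ for a cube root of unity $\theta$ yields an isomorphic elliptic curve by \eqref{eq: affine equation for E} and the remark just after it). Hence the three $j$-invariants descend to rational functions of $(r,s,t)$, and it is enough to show that $(r,s,t)\mapsto (j_1,j_2,j_3)$ is a dominant rational map $\mathcal{A}_2(\Sigma) \dashrightarrow \mathbb{A}^3$; equivalently, that the Jacobian determinant
\[
\det\left(\frac{\partial j_i}{\partial r},\, \frac{\partial j_i}{\partial s},\, \frac{\partial j_i}{\partial t}\right)_{i=1,2,3}
\]
is not identically zero.

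To compute $j_i$ as a rational function of $(r,s,t)$, I would first form the explicit polynomials $(P_i,Q_i)$ according to the Bruin--Flynn--Testa formulas for $(G_i,\lambda_i,H_i)$ recalled above, so that $E_i$ has the plane affine equation $w^3-3Q_i(x)w-2P_i(x)=0$. The standard way to extract the $j$-invariant of a smooth plane cubic through a given flex is to bring it to short Weierstrass form $y^2=x^3+Ax+B$ (for instance by projecting from the chosen flex and using the $j$-invariant formula $j=1728\cdot 4A^3/(4A^3+27B^2)$). Both the transformation to Weierstrass form and the formula for $j$ are rational in the coefficients of $(P_i,Q_i)$, so this produces explicit rational functions $j_i(r,s,t)$.

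Finally, I would specialise at a carefully chosen numerical point $(r_0,s_0,t_0) \in \mathbb{Q}^3$ at which the construction is non-degenerate, i.e.\ $f_{r_0,s_0,t_0}$ is separable of degree $6$ and all three elliptic curves $E_{i,(r_0,s_0,t_0)}$ are smooth and pairwise inequivalent. Computing $j_i$ modulo the square of the maximal ideal $(r-r_0,s-s_0,t-t_0)^2$ and forming the resulting $3\times 3$ Jacobian matrix at $(r_0,s_0,t_0)$, it then suffices to verify numerically that its determinant is non-zero, which implies dominance. The main obstacle is computational rather than conceptual: the rational functions $j_i(r,s,t)$ have large degree (the relevant $G_i,H_i$ already have degree up to $3$ in $x$ and total degree up to $3$ in $(r,s,t)$, and passing to Weierstrass form and then computing $j$ raises these degrees considerably). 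I would therefore perform the verification in a computer algebra system such as MAGMA, along the lines of the computations already referenced in \cite{Computations}.
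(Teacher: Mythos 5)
Your proposal is correct and follows essentially the same route as the paper: both arguments note that the $j_i$ descend to $\mathcal{A}_2(\Sigma)$, reduce dominance to the non-vanishing of the $3\times 3$ Jacobian determinant of $(r,s,t)\mapsto(j_1,j_2,j_3)$, and delegate that verification to a computer calculation. The only cosmetic difference is that the paper checks the determinant is non-zero as an element of $\mathbb{Q}(r,s,t)$ while you propose verifying non-vanishing at a single well-chosen specialisation, which is an equivalent (and often computationally lighter) certificate.
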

\begin{proof}
    Both the source and the target of $J$ are smooth varieties of dimension 3, so it suffices to check that $J$ is generically étale. Note that the elliptic curves $E_1, E_2, E_3$ are actually defined as soon as the torsion points $R_1, R_2, R_3$ are, so their $j$-invariants are well-defined functions on $\mathcal{A}_2(\Sigma)$ (it is only our model \eqref{eq: affine equation for E} that only makes sense on $\mathcal{A}$). Thus, $J$ factors via $\mathcal{A}_2(\Sigma)$, and it suffices to check that $J : \mathcal{A}_2(\Sigma) \to \mathbb{A}^3$ is generically étale.
    By the Jacobian criterion, it suffices to check that the determinant of the matrix
    \[
    \begin{pmatrix}
        \frac{\partial j_1}{\partial r} & \frac{\partial j_1}{\partial s} & \frac{\partial j_1}{\partial t} \\
        \frac{\partial j_2}{\partial r} & \frac{\partial j_2}{\partial s} & \frac{\partial j_2}{\partial t} \\
        \frac{\partial j_3}{\partial r} & \frac{\partial j_3}{\partial s} & \frac{\partial j_3}{\partial t}
    \end{pmatrix}
    \]
    is non-zero as an element of $\mathbb{Q}(r, s, t)$, and since we have explicit expressions for all our objects, this is a direct calculation that is not hard to carry out on a computer \cite{Computations}.
\end{proof}

\subsection{Two-dimensional families}

Consider now the following situation: we let $\tilde{X}_\ModuliParameter$ be a family of (smooth projective) curves of genus $2$ over a base $\ModuliSpace$ with the property that the image of the moduli map $\ModuliSpace \to M_2$ has dimension 2. Suppose given a section $q_t$ of $\ModuliSpace \to M_2$, serving as the choice of point at infinity on $\tilde{X}_{\ModuliParameter}$. By passing to a suitable cover of $\ModuliSpace$ as usual, and replacing it with a Zariski-open subset if necessary, we can assume that the following data are all defined over $\ModuliSpace$:
\begin{enumerate}
    \item a Weierstrass equation $y^2=f_\ModuliParameter(x)$ for $\tilde{X}_\ModuliParameter$;
    \item three decompositions $f_\ModuliParameter(x) = P_{1, \ModuliParameter}(x)^2 - Q_{1,\ModuliParameter}(x)^3= P_{2, \ModuliParameter}(x)^2 - Q_{2,\ModuliParameter}(x)^3= P_{3, \ModuliParameter}(x)^2 - Q_{3,\ModuliParameter}(x)^3$ such that the 3-torsion points $R_1, R_2, R_3$ corresponding to these decompositions (\Cref{lemma: 3-to-1 parametrisation}) satisfy $\langle R_1, R_2 \rangle \cong (\mathbb{Z}/3\mathbb{Z})^2$ and $e_1(R_1, R_2)=1$;
    \item for each $i=1,2,3$, a section $O_{i,\ModuliParameter}$ of the relative genus 1 curve $E_{i,\ModuliParameter} \to \ModuliSpace$ given by \eqref{eq: affine equation for E} such that $O_{i, \ModuliParameter}$ is a flex of $E_{i, \ModuliParameter}$ for every $\ModuliParameter \in \ModuliSpace$ (using $O_{i, \ModuliParameter}$ as neutral point makes $E_{i, \ModuliParameter}$ into an elliptic curve);
    \item for each $i=1,2,3$, the sections $\phi_i(p_{1,\ModuliParameter})$, $\phi_i(p_{2,\ModuliParameter})$, $\phi_i(p_{3,\ModuliParameter})$ of $E_{i,\ModuliParameter}$ of \Cref{setup}.
\end{enumerate}

\Cref{prop: joint j-invariants} implies that there exists $i \in \{1,2,3\}$ such that $E_{i,\ModuliParameter} \to \ModuliSpace$ is non-isotrivial. Without loss of generality, assume that $i=1$ and write $E_\ModuliParameter = E_{i,\ModuliParameter}$.
Denote by $\sigma = (\sigma_1, \sigma_2)$ the section 
\begin{equation}\label{eq: section}
    \sigma(\ModuliParameter) = 
(\phi(p_{1,\ModuliParameter})-\phi(p_{2,\ModuliParameter}), \phi(p_{2,\ModuliParameter})-\phi(p_{3,\ModuliParameter}))
\end{equation}
of the abelian scheme 
\begin{equation}\label{eq: abelian scheme}
    \mathcal{A} = E_\ModuliParameter^2.
\end{equation} 
For every point $\ModuliParameter \in \ModuliSpace$ such that the differences $\phi(p_{1,\ModuliParameter})-\phi(p_{2,\ModuliParameter})$ and $\phi(p_{2,\ModuliParameter})-\phi(p_{3,\ModuliParameter})$ are both torsion on $E_\ModuliParameter$, Bilu's criterion (\Cref{thm: Bilu version 3}) applies and shows that there is effectivity for the integral points on $\tilde{X}_\ModuliParameter$. 

To understand whether $\sigma$ takes infinitely many torsion values, we apply the theory of the \textit{Betti map}, introduced in \cite{MR2918151} and further developed in \cite{ACZ, MR4202453}. Recall that the Betti map associated with a section $\sigma$ of an abelian scheme $\pi_\ModuliSpace : \mathcal{A} \to \ModuliSpace$ (of relative dimension $g$) may be defined as follows. Fix a point $\ModuliParameter_0 \in \ModuliSpace(\C)$ and a sufficiently small, simply connected neighbourhood $\Delta$ of $\ModuliParameter_0$ in $\ModuliSpace(\C)$. We may define holomorphic functions $\omega_1(\ModuliParameter),\ldots,\omega_{2g}(\ModuliParameter)$ on $\Delta$ that give a basis of the periods of the abelian variety $\mathcal{A}_\ModuliParameter$ for $\ModuliParameter \in \Delta$. The Betti map is the function
\[
b_\Delta : \pi_\ModuliSpace^{-1}(\Delta) \to (\mathbb{R}/\mathbb{Z})^{2g}
\]
defined as follows. Given a point $x \in \mathcal{A}_\ModuliParameter$, we can write $x = \sum_{i=1}^{2g} b_i(x) \omega_i(\ModuliParameter)$, where $b_i(x) \in \mathbb{R}$ is well-defined up to translation by integers. We set $b_\Delta(x) = \left( b_1(x), \ldots, b_{2g}(x) \right)$. A point $x \in \pi^{-1}(\Delta)$ is torsion if and only if $b_\Delta(x)$ is in $(\mathbb{Q}/\mathbb{Z})^{2g}$. Since we are interested in torsion values of the section $\sigma$, we will restrict $b_\Delta$ to the image of $\sigma$ (that is, to the surface $\sigma(\ModuliSpace)$).

By \cite[Proposition 2.1.1]{ACZ}, a sufficient condition for $\sigma_\ModuliParameter$ to be torsion for a complex-analytically dense set of points $\ModuliParameter \in \ModuliSpace(\C)$ is that the generic rank of $b_\Delta$ (for any neighbourhood $\Delta$) is equal to $2g=4$. Note that -- since all the data in our problem are defined over a number field -- the points $\ModuliParameter \in \ModuliSpace(\C)$ for which $\sigma_\ModuliParameter$ is torsion all lie in $\ModuliSpace(\overline{\Q})$.
Consider the Cartesian diagram
\begin{equation}\label{eq: pullback from universal families}
\xymatrix{
\mathcal{A} \ar[r]^{\iota} \ar[d]_{\pi_\ModuliSpace} \pullbackcorner & \mathfrak{A}_2 \ar[d]^{\pi} \\
\ModuliSpace \ar[r]^{\iota_{\ModuliSpace}} & \A_2,
}
\end{equation}
where $\mathbb{A}_2$ is the moduli space of principally polarised abelian surfaces, $\mathfrak{A}_2$ is the universal abelian variety over it, and $\iota, \iota_\ModuliSpace$ are the moduli maps induced by $\pi_\ModuliSpace$. Denote by $W$ the image $\sigma(\ModuliSpace)$ of our section. Note that the image of $\iota_\ModuliSpace$ is $1$-dimensional, because our abelian scheme is the square of an elliptic scheme, and therefore $\iota_\ModuliSpace$ factors via the 1-dimensional moduli space $\mathbb{A}_1$. We may then apply \cite[(1.4)]{MR4202453} to study the generic rank of the Betti map. To state this criterion we need one more piece of notation \cite[Definition A.1]{MR4202453}:
\begin{definition}\label{def: gen-sp}
Denote by $\langle W \rangle_{\textrm{gen-sp}}$ the smallest subvariety of $\mathcal{A}$ which contains $W$ and is of the following form:
up to taking a finite cover of $\ModuliSpace$, we have $\langle W \rangle_{\textrm{gen-sp}} = \tau + \mathcal{Z} + \mathcal{B}$, where $\mathcal{B}$ is an abelian subscheme of $\mathcal{A} \rightarrow \ModuliSpace$, $\tau$ is a torsion section of $\mathcal{A} \rightarrow \ModuliSpace$, and $\mathcal{Z} = Z \times \ModuliSpace$, where $C \times \ModuliSpace$ is the largest constant abelian subscheme of $\mathcal{A} \rightarrow \ModuliSpace$ and $Z \subseteq C$.
\end{definition}

The criterion given in \cite[(1.4)]{MR4202453} shows that in our situation (namely, $\dim \iota_\ModuliSpace(\ModuliSpace)=1$) the generic rank of the Betti map is
\begin{equation}\label{eq: Gao}
    2 \min\left( \dim \iota(W), \dim \langle W \rangle_{\textrm{gen-sp}} - \dim(\ModuliSpace) \right).
\end{equation}

Since the $j$-invariant of $E_\ModuliParameter$ is non-constant, $\mathcal{A} \cong E_\ModuliParameter^2$ admits no constant abelian subscheme of positive dimension, so the scheme $\mathcal{Z}$ in Definition \ref{def: gen-sp} is $\{0\}$. We can then distinguish three cases, according to the relative dimension of the scheme $\mathcal{B}$ appearing in Definition \ref{def: gen-sp}:
\begin{enumerate}
    \item Relative dimension $0$: by definition, this means that $\sigma$ is torsion, hence it certainly takes torsion values at a dense subset of $\ModuliSpace(\C)$.
    \item Relative dimension $1$: this implies that (up to torsion) $\sigma$ actually takes values in an elliptic scheme. Since the elliptic scheme in question is non-constant, it is well-known that the set of points where $\sigma$ is torsion is dense in the base.
    \item Relative dimension $2$: in this case, $\dim \langle W \rangle_{\textrm{gen-sp}}=4$, and Gao's formula \eqref{eq: Gao} for the generic rank of the Betti map shows that this generic rank is equal to $2 \dim \iota(W)$. Since $W$ is a surface (it is isomorphic to $\ModuliSpace$) and $\iota(W)$ dominates the curve $\iota_\ModuliSpace(\ModuliSpace)$, there are only two possibilities for $\dim \iota(W)$: either $\dim \iota(W)=2$ or $\dim \iota(W)=1$. In the former case, the generic rank of the Betti map is $2 \dim \iota(W) = 4=2g$, and as already recalled, \cite[Proposition 2.1.1]{ACZ} shows that the set of points $\ModuliParameter \in \ModuliSpace(\C)$ for which $\sigma_\ModuliParameter$ is torsion is complex-analytically dense.
\end{enumerate}
Thus, we are left with understanding the case $\dim \langle W \rangle_{\textrm{gen-sp}}=4$ and $\dim \iota(W)=1$. Note that in this case $\iota(W)$ is finite over $\iota_\ModuliSpace(\ModuliSpace)$. We reformulate this condition in more concrete terms. Fix a $j$-invariant $j_0$ and consider the locus $\ModuliSpace_{j_0}$ of points in $\ModuliSpace$ where $j(E_{\ModuliParameter})=j_0$: since $j$ is non-constant, $\ModuliSpace_{j_0}$ has dimension $1$. The image $\iota_\ModuliSpace(\ModuliSpace_{j_0})$ in $\mathbb{A}_2$ is a point, hence $\iota(\sigma(\ModuliSpace_{j_0}))$ is zero-dimensional. This means that $\sigma$ takes only finitely many values on $\ModuliSpace_{j_0}$, and in particular, it's constant on each connected component. We have proved the following result:
\begin{theorem}\label{thm: dichotomy for families}
    Let $\tilde{X}_\ModuliParameter$ be a family of (smooth projective) curves of genus $2$ over a base $\ModuliSpace$ with the property that the image of the corresponding moduli map $\ModuliSpace \to M_2$ has dimension 2. Suppose given a section $q_t$ of $\tilde{X}_{\ModuliParameter} \to \ModuliSpace$. Define an abelian scheme $\mathcal{A}_\ModuliParameter \cong E_\ModuliParameter^2 \to \ModuliSpace$ as in \eqref{eq: abelian scheme} and a section $\sigma$ of $\mathcal{A}_{\ModuliParameter}$ as in \eqref{eq: section}. One of the following holds:
    \begin{enumerate}
        \item there is a complex-analytically dense set of points $\ModuliParameter \in \ModuliSpace(\overline{\Q})$ for which there is effectivity for the integral points of $\tilde{X}_\ModuliParameter$;
        \item for each $j_0 \in \C$, the section $\sigma_\ModuliParameter$ is constant on each connected component of the subvariety $\{\ModuliParameter \in \ModuliSpace : j(E_\ModuliParameter)=j_0\}$.
    \end{enumerate}
\end{theorem}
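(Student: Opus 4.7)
My plan is to follow the roadmap sketched in the paragraphs immediately preceding the statement, turning the outline into an actual case-analysis proof. The proof proceeds through four main steps.

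First, I would reduce to a situation where the elliptic scheme involved is non-isotrivial. By \Cref{prop: joint j-invariants}, the three $j$-invariants $j_1, j_2, j_3$ attached to the three decompositions of $f_\ModuliParameter$ cannot all be constant along the two-dimensional image of $\ModuliSpace \to M_2$, since their joint map to $\mathbb{A}^3$ dominates. So at least one $E_{i, \ModuliParameter}$ is non-isotrivial, and after reindexing we call this $E_\ModuliParameter$ and work with $\mathcal{A}_\ModuliParameter = E_\ModuliParameter^2$ and the section $\sigma$ defined in \eqref{eq: section}. Bilu's criterion (\Cref{thm: Bilu version 3}, in the form of \Cref{rmk: how to apply Bilu's criterion}) shows that alternative (1) of the theorem holds as soon as we can exhibit a complex-analytically dense set of $\ModuliParameter$ at which $\sigma_\ModuliParameter$ is torsion.

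Second, I would apply the Betti-map criterion of \cite[Proposition 2.1.1]{ACZ}: it suffices to show that the generic rank of $b_\Delta|_W$ equals $2g = 4$, where $W = \sigma(\ModuliSpace)$. Since $\mathcal{A} \to \ModuliSpace$ comes from the square of an elliptic scheme, the moduli map $\iota_\ModuliSpace : \ModuliSpace \to \mathbb{A}_2$ factors through $\mathbb{A}_1$, so $\dim \iota_\ModuliSpace(\ModuliSpace) = 1$. Hence we may apply the formula \eqref{eq: Gao} of \cite[(1.4)]{MR4202453}: the generic rank equals $2\min(\dim\iota(W), \dim\langle W\rangle_{\mathrm{gen\text{-}sp}} - \dim\ModuliSpace)$.

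Third, I would run the case analysis on $\langle W\rangle_{\mathrm{gen\text{-}sp}}$ as in \Cref{def: gen-sp}. Since $j(E_\ModuliParameter)$ is non-constant, $E_\ModuliParameter^2$ has no non-trivial constant abelian subscheme, so $\mathcal{Z} = \{0\}$ and $\langle W\rangle_{\mathrm{gen\text{-}sp}} = \tau + \mathcal{B}$ for an abelian subscheme $\mathcal{B}$ of relative dimension $0$, $1$, or $2$. If $\operatorname{relDim}\mathcal{B}=0$, then $\sigma$ is identically torsion and alternative (1) is trivial. If $\operatorname{relDim}\mathcal{B}=1$, then, up to isogeny and translation by a torsion section, $\sigma$ lies in a non-isotrivial elliptic subscheme (the non-isotriviality follows again from the absence of constant subschemes), and the classical Silverman-Tate argument (or the result of \cite{MR2918151}) yields a complex-analytically dense set of torsion specialisations, again giving (1). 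If $\operatorname{relDim}\mathcal{B}=2$, then $\dim\langle W\rangle_{\mathrm{gen\text{-}sp}} = 4$, so the formula \eqref{eq: Gao} gives generic rank $2\dim\iota(W)$; since $W \cong \ModuliSpace$ has dimension $2$ and $\iota(W)$ surjects onto the curve $\iota_\ModuliSpace(\ModuliSpace)$, either $\dim\iota(W)=2$, giving generic rank $4$ and hence (1), or $\dim\iota(W)=1$.

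Finally, I would show that the remaining case $\dim\iota(W)=1$ produces alternative (2). If $\iota(W)$ is a curve dominating the curve $\iota_\ModuliSpace(\ModuliSpace)$, then $\iota|_W : W \to \iota(W)$ has generically finite fibres. Fix $j_0 \in \C$ and let $\ModuliSpace_{j_0} \subseteq \ModuliSpace$ be the locus where $j(E_\ModuliParameter) = j_0$. Its image in $\mathbb{A}_1$, and hence in $\mathbb{A}_2$ via $\iota_\ModuliSpace$, is a single point, so $\iota(\sigma(\ModuliSpace_{j_0}))$ is contained in a single fibre of $\iota|_W$, which is a finite set. Thus $\sigma$ takes only finitely many values on $\ModuliSpace_{j_0}$, and by continuity it is constant on each connected component, which is alternative (2).

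The main obstacle in writing this out carefully is the middle step: verifying that in the $\operatorname{relDim}\mathcal{B}=1$ case the elliptic subscheme containing $\sigma$ (up to torsion) is genuinely non-isotrivial, so that the classical density results for torsion specialisations of non-torsion sections of non-isotrivial elliptic schemes apply. The rest is bookkeeping on the dimensions of $\iota(W)$ and $\langle W\rangle_{\mathrm{gen\text{-}sp}}$ fed into Gao's formula.
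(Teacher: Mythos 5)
Your proposal is correct and follows essentially the same route as the paper's own argument: reduction to a non-isotrivial $E_\ModuliParameter$ via \Cref{prop: joint j-invariants}, the Betti-map rank criterion of \cite[Proposition 2.1.1]{ACZ} combined with Gao's formula \eqref{eq: Gao}, the case split on the relative dimension of $\mathcal{B}$ in $\langle W\rangle_{\mathrm{gen\text{-}sp}}$, and the identification of the residual case $\dim\iota(W)=1$ with alternative (2) by restricting to the loci $\ModuliSpace_{j_0}$. The step you flag as the main obstacle is handled in the paper exactly as you suggest: the absence of constant abelian subschemes (since $j(E_\ModuliParameter)$ is non-constant) forces the relative-dimension-one subscheme to be a non-isotrivial elliptic scheme, for which density of torsion specialisations is classical.
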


\begin{remark}
In the general setting of abelian schemes, 
it is easy to construct square-elliptic schemes $E_\ModuliParameter^2 \to \ModuliSpace$ with $\dim(\ModuliSpace)=2$ and sections $\sigma$ of $E_\ModuliParameter^2$ such that, on each curve $\ModuliSpace_{j_0}=\{\ModuliParameter \in \ModuliSpace : j(E_\ModuliParameter) = j_0\}$, the section $\sigma$ is constant. There is then no guarantee that the set of $\ModuliParameter \in \ModuliSpace(\C)$ such that $\sigma_\ModuliParameter$ is torsion is dense in $\ModuliSpace$.

\end{remark}

\begin{question}\label{question: method works on every surface}
For a given family $\tilde{X}_\ModuliParameter$, by Proposition \ref{prop: joint j-invariants} there are at least two independent square-elliptic schemes that one can consider, and it seems highly unlikely that case (2) in \Cref{thm: dichotomy for families} happens for \textit{both} of these abelian schemes. It would be interesting to prove that this is in fact impossible.
\end{question}

We now give a concrete application of \Cref{thm: dichotomy for families} by proving \Cref{thm: two-parameters family intro}, which we reproduce here for the reader's convenience:
\begin{theorem}\label{thm: two-parameters family body}
    Let $q_{a,b}$ be the unique point at infinity in the projective completion $\tilde{X}_{a,b}$ of 
    \[
    X_{a,b} : y^4+ay^2-xy-x^3+bx^2=0.
    \]
    There is a complex-analytically dense set of algebraic points $(a,b) \in \overline{\Q}^2$ such that the integral points on $X_{a,b} \colonequals \tilde{X}_{a,b} \setminus \{q_{a,b}\}$ can be determined effectively, over every number field.
\end{theorem}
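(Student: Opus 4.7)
The plan is to reduce \Cref{thm: two-parameters family body} to an application of \Cref{thm: dichotomy for families}. First, setting $\lambda = y/x$ and $\mu = \lambda^{-4}+x$ (adapting \eqref{eq: sextic model} to the sign convention here) realises $\tilde X_{a,b}$ birationally as the sextic $\mu^2 = f_{a,b}(\lambda)$ for an explicit separable sextic $f_{a,b}(\lambda) \in \Q(a,b)[\lambda]$, so its geometric genus is $2$. Homogenising $y^4+ay^2z^2-xyz^2-x^3z+bx^2z^2=0$ and intersecting with $z=0$ gives $y^4=0$, so $[1:0:0]$ is indeed the unique point at infinity; \Cref{rmk: singular points} ensures that the singularity at the origin is harmless for the integral-points problem. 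Hence $\tilde X_{a,b} \setminus \{q_{a,b}\}$ fits into \Cref{setup} with base $\ModuliSpace = \A^2_{(a,b)}$ and section $q_{a,b}$.

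The first hypothesis of \Cref{thm: dichotomy for families} is that the induced moduli map $\ModuliSpace \to M_2$ has $2$-dimensional image. I would verify this by computing the three Igusa invariants of $f_{a,b}$ and checking that the Jacobian of the resulting morphism $\A^2 \to M_2$ has rank $2$ at a single auxiliary point $(a_0,b_0)$, e.g.\ on a computer algebra system as in \cite{Computations}. Passing then to a finite cover $\ModuliSpace'\to \ModuliSpace$ on which one of the $80$ nontrivial $3$-torsion sections of $\Jac(\tilde X_{a,b})$ becomes rational (equivalently, on which a decomposition $f_{a,b} = P^2 - Q^3$ exists rationally), \Cref{setup} produces an elliptic scheme $E \to \ModuliSpace'$ via \eqref{eq: affine equation for E} together with the section $\sigma = (\phi(p_1)-\phi(p_2),\phi(p_2)-\phi(p_3))$ of $E^2$. \Cref{thm: dichotomy for families} then yields the dichotomy: either the conclusion of the theorem holds on $\ModuliSpace'$ (and thus, by pushing the resulting dense set forward under the finite map $\ModuliSpace' \to \A^2$, on $\A^2$), or else $\sigma$ is constant on every connected component of every isotrivial fibre $\{\ModuliParameter \in \ModuliSpace' : j(E_\ModuliParameter) = j_0\}$.

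The remainder of the proof consists in ruling out the second alternative. With $P, Q$ written explicitly as algebraic functions on $\ModuliSpace'$, one computes $j(E)$ as a rational function on $\ModuliSpace'$ and the two components of $\sigma$ as explicit algebraic sections of $E^2$; the desired non-degeneracy amounts to showing that the differential of $j(E)$ together with the differential of (at least one of) $\sigma_1,\sigma_2$ span a rank-$2$ subspace of the cotangent sheaf at a generic point of $\ModuliSpace'$. Equivalently, it suffices to exhibit one value $j_0$ and one geometric component of $\{j(E)=j_0\}\subset \ModuliSpace'$ along which $\sigma$ is non-constant, which ultimately boils down to checking that a single explicit element of $\overline\Q(\ModuliSpace')$ is non-zero, hence to a sample-point evaluation.

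The expected main obstacle is the bulkiness of the $(P,Q)$-decomposition: since none of the $80$ three-torsion points descends to $\Q(a,b)$ in general, the cover $\ModuliSpace'\to \A^2$ will have non-trivial degree (by \Cref{rmk: 40 different curves}, up to $40$ for a full rationalisation, though a single orbit suffices here), and the coefficients of $P$ and $Q$ are algebraic of moderate height. A judicious choice of the $3$-torsion section, tuned to keep the defining polynomials as sparse as possible, combined with symbolic manipulation, should reduce the final verification to a tractable computation. Once carried out, this rules out alternative (2) of \Cref{thm: dichotomy for families}, so alternative (1) holds and yields the required complex-analytically dense set in $\A^2(\overline\Q)$.
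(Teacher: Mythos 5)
Your overall strategy is the same as the paper's: verify that the moduli map $\ModuliSpace \to M_2$ has two-dimensional image, pass to a finite cover where a $3$-torsion section and the data of \Cref{setup} are defined, and invoke \Cref{thm: dichotomy for families}, so that everything reduces to ruling out alternative (2). Up to that point the proposal is correct (the Igusa-invariant rank check is a reasonable substitute for the paper's ``it is easy to see'').

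The gap is in the final verification. You assert that ruling out alternative (2) ``boils down to checking that a single explicit element of $\overline\Q(\ModuliSpace')$ is non-zero, hence to a sample-point evaluation,'' but you never say which element, and the two candidates you gesture at both have problems. A sample-point evaluation showing that $\sigma$ takes two distinct values on a fibre $\{j(E)=j_0\}$ does \emph{not} contradict alternative (2), because that fibre is highly reducible (it lives on a cover of $\A^2$ of degree $160$) and the two sample points may lie on different components, on each of which $\sigma$ could still be constant. The differential criterion ($dj$ and ``$d\sigma_i$'' spanning a rank-$2$ subspace) would in principle detect non-constancy on a generic component, but $\sigma_i$ is a section of a \emph{varying} elliptic scheme, so ``$d\sigma_i$'' is not a function on $\ModuliSpace'$ until you fix a normalisation identifying the fibres with constant $j$ (e.g.\ via the image in the universal family $\mathfrak{A}_2$), and you do not set this up; moreover the paper reports that the relevant function-field computations on the degree-$160$ cover are unwieldy. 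The paper circumvents exactly this obstacle: it fixes $j_0=1$, computes the \emph{degree} of the projective closure of $\{j(E_\ModuliParameter)=1\}$ in the cover (namely $630$), which bounds the number of geometric components, and then exhibits more than $630$ distinct values of $\sigma$ along this locus via specialisations over a finite field. Since a section constant on each component could take at most as many values as there are components (and this persists under reduction mod $p$), this genuinely rules out alternative (2). That counting argument, or some workable substitute for it, is the missing content of your proof.
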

\begin{proof}
We consider $X_{a,b}$ as a family over an open subscheme $\ModuliSpace$ of $\mathbb{A}^2$.
    It is easy to see that the image of the moduli map $\ModuliSpace \to \mathbb{A}_2$ is 2-dimensional.
    By \Cref{thm: dichotomy for families}, it suffices to check that the section \eqref{eq: section} of the abelian scheme $\mathcal{A}_\ModuliParameter = E_\ModuliParameter^2 \to \ModuliSpace$ (see \eqref{eq: abelian scheme}) is not constant when restricted to the irreducible components of the curves in $\ModuliSpace$ with constant $j$-invariant. We prove this computationally, using a specialisation argument. More precisely: for every $j$-invariant $j_0$, let $\ModuliSpace_{j_0} = \{\ModuliParameter \in \ModuliSpace : j(E_\ModuliParameter)=j_0\}$. We have to show that (for at least one value of $j_0$) the restriction of $\sigma_\ModuliParameter$ to $\ModuliSpace_{j_0}$ is not constant on irreducible components. Suppose by contradiction that $\sigma_\ModuliParameter$ is constant when restricted to each irreducible component of $\ModuliSpace_{j_0}$: then the same would hold after reduction modulo $p$, and in particular, $\sigma_\ModuliParameter$ would take at most as many values as there are geometrically irreducible components in $\ModuliSpace_{j_0}$. Unfortunately, the decomposition into irreducible components of $\ModuliSpace_{j_0}$ is computationally unfeasible: in order to even define the curve $E_\ModuliParameter$ (hence the $j$-invariant $j(E_\ModuliParameter)$ and the variety $\ModuliSpace_{j_0}$), we need to replace $\ModuliSpace$ with a cover $\ModuliSpace'$ of degree 80 (over which a 3-torsion point is defined), followed by a cover $\ModuliSpace''$ of degree 2 (to use the parametrisation of 3-torsion points given in Lemma \ref{lemma: 3-to-1 parametrisation}). Furthermore, the equations defining the subvarieties of $\ModuliSpace''$ with constant $j(E_\ModuliParameter)$ are extremely unwieldy. We then proceed in a much more direct way: we fix $j_0=1$ and compute the degree of the projective closure of the scheme $\ModuliSpace''_0 \colonequals \{\ModuliParameter \in \ModuliSpace'' : j(E_\ModuliParameter)=1\}$. This is certainly an upper bound for the number of geometric components of $\ModuliSpace''_0$ (in our concrete case, the degree turns out to be 630). We then simply find more than $630$ specialisations (over a finite field) along $\ModuliSpace''_0$ where $\sigma_\ModuliParameter$ takes distinct values, thus proving that the restriction of $\sigma_\ModuliParameter$ to $\ModuliSpace''_0$ is not constant on irreducible components.
\end{proof}

\section{Comments on the method}\label{sect: comments}
In this section we briefly discuss some features of our method and show how it departs from previously existing ones. To set the notation, start with a family of genus-2 curves $X_\alpha = \tilde{X}_\alpha \setminus \{q_\alpha\}$ (with a single point $q_\alpha$ at infinity) over some base $\ModuliSpace$, where $\alpha$ denotes a variable point in $\ModuliSpace$. Up to extending the base to some $\ModuliSpace' \to \ModuliSpace$, we can and will assume that all the data we consider (three torsion point, degree-3 cover, etcetera: see \Cref{setup}) are defined over $\ModuliSpace'$. Denote by $\beta$ a variable point in $\ModuliSpace'$, by $\tilde{Y}_\beta \to \tilde{X}_\beta$ the étale degree-3 cover constructed in our method, and by $p_{1, \beta}, p_{2, \beta}, p_{3, \beta}$ the points of $\tilde{Y}_\beta$ lying over the point at infinity of $\tilde{X}_\beta$. Suppose that, for a fixed $\beta$, the sections $p_{1, \beta}-p_{2,\beta}, p_{2,\beta}-p_{3,\beta}$ of $\Jac(\tilde{Y}_\beta)$ are torsion. Notice that the torsion orders are equal, since the two sections are related by an automorphism of $\operatorname{Jac}(\tilde{Y}_\beta)$, see the proof of \Cref{lemma: torsion on J_Y iff both torsion on E}. Denote by $n$ the exact common torsion order.

Analysing the proof of Bilu's criterion, we see that the desired effectivity is obtained as follows. Our torsion sections induce a morphism from $Y_\beta$ to an irreducible curve $Y^*$ defined by an equation $F(u_\beta,v_\beta)=0$ in $\G_m^2$. Here $Z$ is not a translate of a torus and $u_\beta, v_\beta$ are functions on $Y_\beta$ with divisors $n(p_{1, \beta}-p_{2, \beta})$, $n(p_{2, \beta}-p_{3, \beta})$.

These functions are multiplicatively independent modulo constants and assume $S$-unit values at the integral points of $Y_\beta$ which are lifted (using Chevalley-Weil) from those on $X_\beta$. The theory of Puiseux series and linear forms in logarithms gives the desired effectivity, as in the proof of Bilu's theorem.
We now describe in greater detail the image curve $Y^*$.

\begin{remark}[Degree of the image curve] Let us check that for large torsion orders 
the degree of the image curve $Y^* : F=0$ also increases. Since $n$ is the exact (common) torsion order, for every $m \geq 2$ the functions $u_\beta,v_\beta$ are not perfect $m$-th powers in $k(Y)$ (if $u_\beta = (u_\beta')^m$, the equation $m \operatorname{div}(u_\beta')=\operatorname{div}(u_\beta) = n(p_{1,\beta} - p_{2,\beta})$ shows that the torsion order is at most $n/m$; similarly for $v_\beta$). In the rest of the argument, we drop the subscript $\beta$ for convenience. 
 
We have an extension $k(Y)/k(u)$, totally ramified below $p_1$ and $p_2$ and of degree $n$ (indeed, once we know that the extension is totally ramified, the degree can be read off the divisor $\operatorname{div}(u) = n(p_1-p_2)$),  and we have an intermediate extension $k(u,v)/k(u)$. The extension $k(Y)/k(u,v)$, say of degree a divisor $m$ of $n$, is totally ramified below $p_1,p_2,p_3$ at least (by symmetry in the three points $p_1, p_2, p_3$).
   
Suppose first that the genus of $Y^*$, i.e.~of $k(u,v)$, is strictly greater than $1$. The map $Y \to Y^*$, of degree $m$, has at least $3$ ramification points of multiplicity at least $m-1$, so by the Hurwitz formula we have $6=2g(Y)-2\ge 2m +3(m-1)=5m-3$. We obtain $m=1$ and $k(u,v)=k(Y)$, so $F$ has degree at least (hence exactly) $n$. Indeed, the degree of $F$ is the degree of $k(Y)=k(u,v)$ over $k(u)$, which we have already proved to be $n$.

Suppose now that $g(Y^*)=1$. Then as above we obtain $6\ge 3m-3$, hence $m\le 3$ and we have a lower bound $n/3$ for the degree of $F$. 
   
Suppose finally that $g(Y^*)=0$. Then $k(u,v)=k(z)$ for some function $z\in k(Y)$. After a homography, we can assume that $u=z^q$ (since $k(Y)/k(u)$ is totally ramified above two points) and $v=c_1(z-c_2)^q$ for  constants $c_1,c_2\neq 0$ and $qm=n$. The divisor of $z$ would satisfy $q\cdot \operatorname{div}(z)=\operatorname{div}(u)=n(p_1-p_2)$, so $\operatorname{div}(z)=(n/q)(p_1-p_2)$. Since we are assuming that $n$ is the exact torsion order of $p_1-p_2$, we conclude that $q=1$, and therefore $c_3u+c_4v=1$ for some non-zero constants $c_3,c_4$. By the functional $abc$-theorem (Stothers-Mason theorem), since the total number of zeroes and poles of $u, v$ is three, we deduce $n\le 3+(2g(Y)-2)=9$, so $n$ is bounded, concluding the argument.
\end{remark}

\begin{remark}[Non-existence of universal equations for the method]
In our application of the method (say, for example, \Cref{T}), the integer $n$ grows to infinity, so we cannot perform the procedure sketched above in a uniform (bounded-degree) way for all curves in a given family. In fact, if $n$ is the exact torsion order of our divisors in $\operatorname{Jac}(\tilde{Y})$, the morphism $Y \to Y^*$ does not factor nontrivially through a morphism of lower degree to $\G_m^2$.

This is immaterial for the result itself, but it shows that one cannot prove effectivity by universal equations that apply to all members in the family (as would be possible, for instance, for the affine curves $y^2=f(x)$ with two points at infinity). The situation is somewhat reminiscent of Belyi's theorem.

Note that one can construct pencils where the relevant sections are identically torsion, see for example \S\ref{subsec: example with sections that are identically 2-torsion}. For such pencils, we have effectivity for all values of the parameter. However, this effectivity is in a sense less interesting, because it corresponds to a fixed type of diophantine equation (in the same way that all equations of type $y^2=f(x)$ may be effectively solved in integers).

\end{remark}  

\begin{remark}[Parameters $\beta$ for which there is effectivity]
It is possible to check effectively whether a given value of $\beta$ is or is not in our set: the sections $\phi(p_1)-\phi(p_2), \phi(p_2)-\phi(p_3)$ are explicit, and it is not hard to test effectively whether a point on an elliptic curve is torsion.
\end{remark}

\section{Explicit examples}\label{sect: examples}
In this final section, we collect some examples of families of genus-2 curves that exhibit different behaviours with respect to the objects considered in Setup \ref{setup}. The examples we give are rather arbitrary: the numerical coefficients are chosen to give comparatively simple equations, but don't have any intrinsic meaning. The reader could easily build many more examples simply by varying these numerical coefficients. Computer code to verify the claims made in this section is available at \cite{Computations}.

We will build most of our examples by first writing down a hyperelliptic equation of the form
\[
\tilde{X}_\ModuliParameter : y^2 = P_\ModuliParameter(x)^2-Q_\ModuliParameter(x)^3,
\]
where we choose the polynomials $P_\ModuliParameter(x), Q_\ModuliParameter(x)$ as a function of a parameter $\ModuliParameter$ in affine space (or, more precisely, in an open subset $U$ of affine space) and define $f_\ModuliParameter$ accordingly. We take as point at infinity $q_\ModuliParameter$ one of the two points with $x=0$; it will not matter which one of the two\footnote{there are two distinct points with $x=0$, provided that $P(0)^2-Q(0)^3 \neq 0$}, because the points $\phi(p_{1,\ModuliParameter}), \phi(p_{2,\ModuliParameter}), \phi(p_{3,\ModuliParameter})$ on
\[
E_\ModuliParameter : w^3 - 3Q_\ModuliParameter(x)w - 2P_\ModuliParameter(x)
\]
are those with $x$-coordinate equal to the $x$-coordinate of $q_\ModuliParameter$ (and in particular they do not depend on the $y$-coordinate of $q_\ModuliParameter$). Note that the section $q_\ModuliParameter$ is not defined over $U$, only over a double cover, but this is irrelevant for the construction of the examples. Finally, we write
\[
\sigma = (\sigma_1, \sigma_2) \colonequals (\phi(p_{1,\ModuliParameter})-\phi(p_{2,\ModuliParameter}), \phi(p_{2,\ModuliParameter})-\phi(p_{3,\ModuliParameter})).
\]

\subsection{Identically dependent components $\sigma_1, \sigma_2$}\label{subsect: identically dependent components}
We take
\[
P_\ModuliParameter(x) = b_3x^3, \quad Q_\ModuliParameter(x) = c_2x^2 + 3,
\]
where the parameter is $\ModuliParameter =(b_3, c_2)$. We have
\[
\phi(p_{1,\ModuliParameter}) = (0,3), \quad \phi(p_{2,\ModuliParameter})=(0,0), \quad \phi(p_{3,\ModuliParameter})=(0,-3).
\]
One checks that $(0,0)$ is a flex on $E_\ModuliParameter$, hence (since $\phi(p_{1,\ModuliParameter}), \phi(p_{2,\ModuliParameter}), \phi(p_{3,\ModuliParameter})$ lie on the same line) we have $\phi(p_{1,\ModuliParameter}) + \phi(p_{2,\ModuliParameter}) + \phi(p_{3,\ModuliParameter}) \sim 3\phi(p_{2,\ModuliParameter})$, and therefore the sections $\phi(p_{1,\ModuliParameter})-\phi(p_{2,\ModuliParameter})$ and $\phi(p_{2,\ModuliParameter})-\phi(p_{3,\ModuliParameter})$ coincide identically. In this case, the whole family $\tilde{X}_\ModuliParameter$ admits an automorphism of order $2$ (induced by $x \mapsto -x$) such that the corresponding quotient is a curve of genus $1$.

\subsection{The sections $\sigma_1, \sigma_2$ are generically independent}\label{subsec: sections generically independent}
For the next three examples we take
\[
P_\ModuliParameter(x) = b_3x^3 + b_1 x + 10, \quad Q_\ModuliParameter(x) = c_2 x^2 + 7,
\]
where the parameter is $\ModuliParameter=(b_3, b_1, c_2)$.
The choice of coefficients ensures that $P(0)^2 - Q(0)^3 \neq 0$ (so that the points with $x=0$ are non-special) and
\begin{equation}\label{eq: points at infinity on E}
 \phi(p_{1,\ModuliParameter}) = (0, -4), \quad \phi(p_{2,\ModuliParameter})=(0, -1), \quad \phi(p_{3,\ModuliParameter}) = (0, 5).
\end{equation}
\begin{remark}
    The coefficients $P(0)=10$ and $Q(0)=7$ are the minimal integers for which the points $\phi(p_{i,\ModuliParameter})$ have integer coordinates and are not flexes of $E_\ModuliParameter$.
\end{remark}

We now consider the point $\ModuliParameter_0$
with coordinates $(b_0, b_2, c_0) = (1, 0, -1)$. The sections $\sigma_1(\ModuliParameter_0), \sigma_2(s_0)$ provide two independent points of infinite order in $E_{\ModuliParameter_0}$: this can be tested by computing the height pairing between them and checking that it is non-degenerate. In other words, the group generated by $\sigma_{1, \ModuliParameter_0}, \sigma_{2, \ModuliParameter_0}$ in $E_{\ModuliParameter_0}(\mathbb{Q})$ is free of rank 2.
This implies that generically there are no linear relations between $\sigma_1$ and $\sigma_2$, and in particular, they are not identically torsion on $U$ (hence, a fortiori, there are no relations when we work over the whole moduli space of pointed genus-2 curves).

\subsection{A subvariety over which $\sigma_1$ is identically 2-torsion, but $\sigma_2$ is generically of infinite order}\label{subsect: sigma1 2-torsion, sigma2 infinite order}

Along the 2-dimensional subvariety
\[
b_3 = \frac{7}{27} b_1 c_2 - \frac{4}{3^9} b_1^3,
\]
the section $\sigma_1$ is identically of order 2 on $E_\ModuliParameter$, while $\sigma_{2}$ is generically of infinite order. To justify this last claim, one can take a further specialisation to the point $\ModuliParameter_1$ with coordinates $b_1=c_1=1$ (and $b_3$ given by the formula above) and check that the resulting point $\sigma_{2, \ModuliParameter_1}$ is not torsion. To find this example, we computed $2\sigma_1$ in parametric form (that is, as a section over the function field $\Q(b_1, b_3, c_2)$) and then imposed $2\sigma_1 = \infty$.
We also remark that $j$ is non-constant along this 2-dimensional family.

\subsection{Subvarieties giving torsion sections of higher order}
In principle, one can describe the surfaces along which one of the two sections $\sigma_i$ is killed by multiplication by any given $N$. However, these quickly become quite complicated: the (irreducible) surface along which $\sigma_1$ is of order 3 can again be computed by determining $3\sigma_1$ over the function field $\Q(b_1, b_3, c_2)$, and is given by
\[
\begin{aligned}
b1^8 & - \frac{837}{2}b_1^6c_2 + \frac{3645}{2}b_1^5b_3 + \frac{951345}{16}b_1^4c_2^2 - \frac{4113747}{8}b_1^3b_3c_2 + \frac{10451673}{16}b_1^2b_3^2 \\
& - \frac{42338133}{16}b_1^2c_2^3  + \frac{301327047}{8}b_1b_3c_2^2 -
        \frac{1420541793}{16}b_3^2c_2 - \frac{129140163}{4}c_2^4 = 0
 \end{aligned}
\]
A rational point of relatively small height on this surface is the point $\ModuliParameter_2$ with coordinates $(b_0, b_2, c_0) = (-9, 2, -11/4)$. At this specialisation, $\sigma_{1, \ModuliParameter_2}$ has order 3 and $\sigma_{2, \ModuliParameter_2}$ has infinite order.
\subsection{A family over which $\sigma_1, \sigma_2$ are both identically $2$-torsion}\label{subsec: example with sections that are identically 2-torsion}
Finally, the choice
\begin{equation}\label{eq: both identically torsion}
Q_\ModuliParameter(x) = x^2 + 7, \quad P_\ModuliParameter(x) = b_2 x^2 + 10, \quad \tilde{X}_\ModuliParameter : y^2 = f_\ModuliParameter(x) = P_\ModuliParameter(x)^2-Q_\ModuliParameter(x)^3
\end{equation}
(again with point at infinity having $x$-coordinate $0$) yields a pencil along which
the sections $\sigma_1, \sigma_2$ are both identically $2$-torsion. More precisely, with notation as in \eqref{eq: points at infinity on E} we have equalities of divisors
\[
2(\phi(p_{1,\ModuliParameter}))-2(\phi(p_{2,\ModuliParameter})) = \operatorname{div}\left( \frac{w+4}{w+1} \right), \quad 2(\phi(p_{2,\ModuliParameter}))-2(\phi(p_{3,\ModuliParameter})) = \operatorname{div}\left( \frac{w+1}{w-5} \right).
\] 
This example is found by considering a three-parameter family, working over the function field $\Q(b_2, b_3, c_2)$, and imposing $2\sigma_1 = 2\sigma_2=0$.
By computing invariants, one can also check that this family gives infinitely many geometrically non-isomorphic genus-2 curves. For this whole family of curves, Bilu's criterion (Theorem \ref{thm: Bilu version 3}) applies identically, yielding effectivity for the integral points. Note however that all the curves in the pencil possess the extra automorphism $x \mapsto -x$ which fixes the point at infinity.

\subsection{A concrete example}\label{subsec: concrete example, details}
We discuss the curve $X$ of \S\ref{subsubsec: concrete example}. Recall that $X$ is given by the planar model
\[
324x^4 - 324x^3 - 18x^2y + 71x^2 - 10xy - 6y^3 - 12y^2 = 0.
\]
As in Lemma \ref{lemma: singular model with one point at infinity}, this plane quartic has a unique singular point at $(0, 0)$ and a unique point $q$ at infinity, so it represents a curve of genus 2 with a marked point at infinity. 
The desingularisation of $X$ is isomorphic to the projective curve $\tilde{X}$ with hyperelliptic model
\[
y^2 = f(x) \colonequals 
360x^6 + 2052x^5 + 3969x^4 + 2916x^3 + 486x^2 + 729,
\]
with the point at infinity mapping to $(0, 27)$ in the hyperelliptic model (in particular, the point at infinity is not special). Using \cite{MR3882288, MR4280568} or \cite{MR1748293}, one can check that the Jacobian of $\tilde{X}$ is geometrically simple and does not have any non-trivial endomorphisms.
The decomposition
\[
f(x) = P(x)^2-Q(x)^3=\left(19x^3 + 54x^2 + 27x\right)^2-\left(x^2 -9\right)^3
\]
gives rise (see Lemma \ref{lemma: 3-to-1 parametrisation}) to a 3-torsion point on the Jacobian of $\tilde{X}$ and to an étale triple cover $\tilde{Y} \to \tilde{X}$ (see Setup \ref{setup}). There is an isogeny $\Jac(\tilde{Y}) \sim \Jac(\tilde{X}) \times E^2$, where $E$ is the elliptic curve given by \eqref{eq: affine equation for E}, namely
\[
E : w^3-3Q(x)w-2P(x) = w^3 - 3(x^2-9)w  -2\left(19x^3 + 54x^2 + 27x\right) = 0,
\]
where we can take the origin of the group law to be $(0,0)$.
By Proposition \ref{prop: properties of the construction} (6), the three points at infinity on $\tilde{Y}$ (that is, the three inverse images of $q$ under the map $\tilde{Y} \to \tilde{X}$) map to the three points on $E$ with $x=0$, that is,
\[
S_1 = (0,0), \quad S_2 = (0, 3\sqrt{3}i), \quad S_3 = (0, -3\sqrt{3}i).
\]
The differences $S_1-S_2, S_2-S_3$ are both $3$-torsion points (in fact, they are conjugate under the action of $\operatorname{Gal}(\overline{\Q}/\Q)$, hence -- since $E$ is defined over $\Q$ -- it suffices to check that one of them is torsion). By Remark \ref{rmk: how to apply Bilu's criterion} and Lemma \ref{lemma: torsion on J_Y iff both torsion on E}, this implies that there is effectivity for the integral points of $X = \tilde{X} \setminus \{q\}$.
This example is found by starting with a family similar to that of Section \ref{subsect: identically dependent components}, namely we take 
\[
P_\ModuliParameter(x) = b_3x^3+b_2x^2+b_1x, \quad Q_\ModuliParameter(x) = x^2 -9, \quad \text{ and } \quad q_\ModuliParameter=(0, 27),
\]
and then imposing that the points $S_2-S_1, S_2-S_3$ are torsion of a given order (in this case, $3$).

\subsection{Torsion of different orders}\label{subsect: different torsion orders}
Using a construction similar to that of the previous section, one shows that our method applies to the curve given by
\[
X : 81z^4 - 162z^3 + 9z^2w + 107z^2 - 62zw - 9w^3 + 36w^2=0,
\]
which is again a plane quartic with one singular point at the origin and one point at infinity. Here the two torsion sections have different orders: we have $2(S_1-S_2)=0$ and $3(S_2-S_3)=0$. The point at infinity is non-special; moreover, the Jacobian of $\tilde{X}$ is geometrically simple and does not have any non-trivial endomorphisms (we point out that in this case the criterion of \cite{MR1748293} does not suffice, and we need to rely on \cite{MR3882288}).

\subsection{A pencil with infinitely many effective examples}\label{subsect: pencil}
We describe a geometric method to obtain a 1-parameter family of genus-2 curves containing infinitely many for which there is effectivity for the integral points. Consider the Fermat cubic ${\mathcal F}:u^3+v^3=1$. In our pencil, the elliptic curves constructed by our method will all be isomorphic to ${\mathcal F}$. 

Working in the projective plane with homogeneous coordinates $(\LastCoord: U: V)$, where $u=U/\LastCoord$ and $v=V/\LastCoord$, we project $\mathcal{F}$ from a variable point $(0:\alpha:1)$ on the line at infinity to the line $V=0$. There are 6 branch points for this projection, given by the roots of the polynomial $f_\alpha(u) \colonequals (\alpha^3+1)u^6-2u^3+1$. We take our family of genus-2 curves to be
 \begin{equation*}
  \tilde{X}_\alpha : v^2=f_\alpha(u) = (\alpha^3+1)u^6-2u^3+1
 \end{equation*}
 over the $\alpha$-line.
 Since $(\alpha^3+1)u^6-2u^3+1=P^2-Q^3$ with $P=u^3-1$, $Q=-\alpha u^2$, we have a 3-torsion point defined over the base, and we can apply our method. One checks that the elliptic curve $E_\alpha$ is $Z^3+3\alpha ZU^2-2U^3+2\LastCoord^3=0$, isomorphic to $\mathcal{F}$ over some extension of $\Q(\alpha)$. Choosing the point at infinity on $\tilde{X}_\alpha$ in such a way that $\phi(p_2)=\phi(p_{2,\alpha})$ is a flex on $E_\alpha$ not on the line $\LastCoord=0$ gives the desired family of examples. We point out that the generic automorphism group of $\tilde{X}_\alpha$ is isomorphic to the dihedral group $D_6$ of order 12, but the point at infinity is neither special nor fixed by any automorphism (generically), hence we do not fall into a Thue or hyperelliptic case. This family contains infinitely many members for which our method gives effectivity for the integral points. The order of the torsion points to consider grows to infinity, so the comments of \Cref{sect: comments} apply: one cannot obtain the desired effectivity by `universal equations' along the entire family.

 \begin{remark}
     In this case, the base space of the family is an irreducible curve and the section $\sigma= (p_{i,1}-p_{i,2}, p_{i,2}-p_{i,3})$ is not identically torsion. 
The values of $\beta$ for which there is effectivity for the integral points of $X_\beta$ are algebraic numbers with the following properties: 
 \begin{enumerate}
     \item their heights are bounded, by well-known results of Silverman (or Demianenko-Manin, since we are in the isotrivial case); 
     \item their degrees tend to $\infty$ (this follows for example from the boundedness of height);
     \item they are topologically dense (for the complex topology).
 \end{enumerate}
 The same comments apply more generally to any family over an irreducible 1-dimensional base, provided that one can show that there are infinitely many torsion specialisations.
 \end{remark}

\bibliographystyle{alpha}
\bibliography{biblio.bib}

\end{document}